\title{{\bf Simple Groups Whose Prime Graph or Solvable Graph is Split}}
\author{ Mark L. Lewis, J. Mirzajani, A. R. Moghaddamfar, A. V. Vasil'ev, M. A. Zvezdina }
\newtheorem{theorem}{Theorem}[section]
\newtheorem{corollary}[theorem]{Corollary}
\newtheorem{proposition}[theorem]{Proposition}
\newtheorem{lm}[theorem]{Lemma}
\newtheorem*{coj*}{Conjecture}
\begin{document}
\newcommand{\f}{\frac}
\newcommand{\sta}{\stackrel}
\maketitle
\begin{abstract}
\noindent A graph is split if there is a partition of its vertex set into a clique and an independent set.
The present paper is devoted to the splitness of some graphs related to finite simple groups, namely, prime graphs and solvable graphs, and their compact forms.
It is proved that the compact form of the prime graph of any finite simple group is split.
\end{abstract}

{\em Keywords}: simple group, prime graph, solvable graph, split graph.
\def\thefootnote{ \ }
\footnotetext{{\em $2000$ Mathematics Subject Classification}:
20D05, 05C25.}
\section{\sc Introduction}
In this paper we consider only simple finite graphs.
A graph $\Gamma$ is a {\em split graph} if and only if its vertex set can be partitioned into a complete and an independent set (either set can be empty). We refer to such a partition as a {\em split partition}.
Split graphs were introduced independently by F${\rm \ddot{o}}$ldes and Hammer~\cite{Foldes-Hammer}, and by Tyshkevich and Chernyak~\cite{79TysChe}. Split graphs are a popular subclass of perfect graphs,
and they can be characterized by a combination of small forbidden subgraphs (see Proposition \ref{forbidden}). Hammer and Simeone \cite{81HamSim} characterized split graphs in terms of their degree sequences.
In fact, split graphs are those graphs for which equality holds in the $m$-th Erd${\rm \ddot{o}}$s-Gallai inequality, where $m=m(\Gamma)=\operatorname{max}\{i:d_i\geqslant i-1\}$ for the degree sequence $\{d_i\}$ of a graph $\Gamma$. It is also proved in \cite{81HamSim} that, given a split graph $\Gamma$, the number $m(\Gamma)$ is equal to both chromatic number $\chi(\Gamma)$ and the number of vertices in the maximal independent set of $\Gamma$.
The degree sequence characterization implies a linear-time recognition for split graphs (there is an algorithm for determining whether a graph on $n$ vertices is a split graph, which has time complexity $O(n)$ if the degree sequence is given).

The present paper concerns with the graphs associated with finite groups. Let $\pi(G)$ denote the set of all prime divisors of the order of a finite group $G$. The {\em prime graph}
(or the {\em Gruenberg-Kegel} graph) ${\rm GK}(G)$ consists of the vertex set $\pi(G)$ and the set of edges $\{r, s\}$ such that there is an element of order $rs$ in $G$. It is clear that two primes $r, s\in\pi(G)$ are adjacent in ${\rm GK}(G)$ if and only if $G$ has a cyclic subgroup of order divisible by $rs$. Substituting `{\em cyclic}'
by `{\em solvable}' in this definition, we come to the notion of {\em solvable graph}. Thus, the solvable graph $\mathcal{S}(G)$ of a finite group $G$ has the vertex set $\pi(G)$, and two primes $r$ and $s$ are adjacent in $\mathcal{S}(G)$ if and only if $G$ has a solvable subgroup of order divisible by $rs$.
Note that ${\rm GK}(G)$ is a subgraph of $\mathcal{S}(G)$.

The notion of a prime graph was introduced in 1970s by Gruenberg and Kegel. Later on, Williams \cite{w} and Kondrat'ev \cite{k} obtained the classification of finite simple groups with disconnected prime graph. Abe and Iiyori introduced solvable graphs in \cite{abe-iiyori} and showed that the solvable graph of a finite group is always connected, and it is not complete when we restrict the base group to a nonabelian finite simple group. Both types of graphs yield rich information on the group structure
(see, e.g., \cite{lucido,05Vas, w}). Moreover, sometimes finite simple groups can be characterized by their prime graphs (see \cite{Hagie,Zavarnitsin,Zvezdina}). For these reasons, the prime and solvable graphs of finite (simple)
groups have been studied extensively for the last 40 years.
Gruber et al.~\cite{Gruber} characterized prime graphs of solvable groups.
Vasil'ev and Vdovin established an arithmetic criterion of adjacency and described all cocliques of maximal size in prime graph of every finite nonabelian simple group \cite{vasile-v,11VasVd.t}. Amberg and Kazarin \cite{13AmKaz} used this approach for the solvable graphs of finite simple groups and described the maximal cocliques in these graphs.

We investigate splitness of the prime graphs and solvable graphs of finite simple groups. %One of our main results is the following:

%\medskip
%\noindent {\bf Theorem A.} \ {\em {\rm (a)} The prime graph ${\rm GK}(G)$ and the solvable graph ${\cal S}(G)$ of any alternating and symmetric group $G$ is split.

%{\rm (b)}  The prime graph ${\rm GK}(G)$ of any sporadic simple group $G$ is split. The solvable graph ${\cal S}(G)$ of a sporadic simple group $G$ is split, except for the following simple groups:  $M_{22}$, $M_{23}$, $M_{24}$, $Co_3$, $Co_2$,
%$Fi_{23}$, $Fi_{24}'$, $B$, $M$ and $J_4$.}

\medskip
\noindent {\bf Theorem A.} \ {\em The prime graph ${\rm GK}(G)$ and the solvable graph ${\cal S}(G)$ of any alternating and symmetric group $G$ is split.}%\\[0.3cm]

\medskip
\noindent {\bf Theorem B.} \ {\em The prime graph ${\rm GK}(G)$ of any sporadic simple group $G$ is split. The solvable graph ${\cal S}(G)$ of a sporadic simple group $G$ is split, except for the following simple groups:  $M_{22}$, $M_{23}$, $M_{24}$, $Co_3$, $Co_2$,
$Fi_{23}$, $Fi_{24}'$, $B$, $M$ and $J_4$.}

\medskip
Now let $G$ be a finite simple group of Lie type over the field of order $q$, where $q$ is a power of a prime $p$. Then all numbers in $\pi(G)$, except for $p$, are primitive prime divisors for the numbers of the form $q^i-1$ for some $i$ (for definitions the reader is referred to Section \ref{pre-Lie}). Due to the properties of primitive prime divisors, it is easy to see that in most cases the prime graph ${\rm GK}(G)$ is not split (see Proposition \ref{nonspl-GK}).
However, the situation changes if we consider the {\em compact form} ${\rm GK_c}(G)$ of this graph.
Here by the {\em compact form} $\Gamma_{\rm c}$ of a graph $\Gamma$ we mean the quotient graph $\Gamma_{\rm c}=\Gamma/_\equiv$ with respect to the following equivalence relation on the vertex set $V_\Gamma$ of $\Gamma$: for every $u,v\in V_\Gamma$ we put $u\equiv v$ if $u^{\bot}=v^{\bot}$, where $a^{\bot}$ is the ball of radius 1 with center $a$ in $\Gamma$.

\medskip
\noindent {\bf Theorem C.} \ {\em The
graph ${\rm GK}_{\rm c}(G)$ of a finite simple group $G$ of Lie type is split.}

\medskip
Note that if $G$ is an abelian simple group, i.e. a group of prime order, then the graph ${\rm GK}(G)=\mathcal{S}(G)$ is a singleton, so it is split. It is also clear that a split graph has the split compact form. Thus, Theorems A, B and C yield the following:

\medskip
\noindent {\bf Theorem D.} \ {\em The
graph ${\rm GK_c}(G)$ of any finite simple group $G$ is split.}

\medskip

%Our study also shows that the compact form ${\cal S}_{\rm c}(G)$ of the solvable graph for some simple groups of Lie type splits.
Although the compact form ${\cal S}_{\rm c}(G)$ of the solvable graph of a group $G$ is ``close'' to ${\rm GK_c}(G)$,
our study shows that there are examples of nonsplitness of ${\cal S}_{\rm c}(G)$, where $G$ is a sporadic group (see Proposition \ref{Sc_sporadic}) or a group of Lie type (see Section \ref{Sc_nonsplit}) %where $G$ is a group of Lie type.
Moreover, due to the positive solution of the Artin conjecture on primitive roots for almost all primes \cite{heath-brown}, one can construct infinitely many such examples (see Proposition \ref{nonspl-Sc}).

%%%%%%%%%%%%%%%%%%%%%%%%%%%%%%%%%%%%%%%%%%%%%%%

\section{\sc Preliminaries}
Only basic concepts about graphs and groups will be needed for
this paper. They can be found in any textbook about the Graph
Theory or Group Theory, for instance see \cite{Gor, west}.

Let $\Gamma=(V_\Gamma, E_\Gamma)$ be a graph with vertex set $V_\Gamma$ and edge set $E_\Gamma$. Given a set of vertices $X\subseteq V_\Gamma$, the subgraph induced by $X$ is written $\Gamma [X]$. We often identify a subset of vertices with the subgraph induced by that subset, and vice versa. A subset $X$ of $V_\Gamma$ is called a {\em clique} if the induced subgraph $\Gamma[X]$ is complete, and it is {\em independent}  if $\Gamma[X]$ is a null graph (i.e. a graph without edges). As usual, we denote by $K_n$ the complete graph on $n$ vertices.

A graph $\Gamma$ is a split graph if and only if there is a partition $V_\Gamma=C\uplus I$, where $C$ is a complete and $I$ an independent set (either of which might be empty). Thus $\Gamma$ can be `split' into a complete and an independent set. Any partition of the vertex set of a split graph into a complete and an independent set is called a {\em split partition}. In a particular case, a split partition $V_\Gamma=C\uplus I$ is {\em special} if every vertex in $I$ is not adjacent to at least one vertex in $C$. Note that, every split graph has a special split partition, because if there is a vertex in $I$ adjacent to any element in $C$, it can be moved to $C$. It should be pointed out that, a split partition (or a special split partition) of a split graph is not unique, but it is always possible to choose a partition such that $C$ is a clique of maximum size.

The following result is an immediate consequence of the definition of compact form given in Introduction.
\begin{proposition}\label{p-2.1}
 If a graph $\Gamma$ is split, then its compact form $\Gamma_{\rm c}$  is also split.
Conversely, if the compact form  $\Gamma_{\rm c}$ is split with split partition $C\uplus I$, then $\Gamma$ is split
if every vertex in $I$ is a singleton.
\end{proposition}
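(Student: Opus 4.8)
The plan is to unwind the definition of the compact form and then track how the two parts of a split partition behave under the quotient. First I would record the basic structure of the equivalence relation $\equiv$ on $V_\Gamma$. If $u\equiv v$ and $u\neq v$, then $u\in u^{\bot}=v^{\bot}$ forces $u\in v^{\bot}\setminus\{v\}$, i.e. $u$ is adjacent to $v$; hence every $\equiv$-class is a clique of $\Gamma$. Similarly, for any $w$ outside the class of $u$ and $v$ one has $w\sim u\iff w\in u^{\bot}\iff w\in v^{\bot}\iff w\sim v$, so two members of the same class have the same neighbours outside it; consequently adjacency in $\Gamma_{\rm c}$ is well defined and, for distinct classes $[u],[v]$, one has $[u]\sim[v]$ in $\Gamma_{\rm c}$ if and only if $u\sim v$ in $\Gamma$. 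The one extra observation I need is that a clique meets an independent set in at most one vertex, so each $\equiv$-class contains at most one vertex of any fixed independent set of $\Gamma$.

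For the first assertion I would fix a split partition $V_\Gamma=C\uplus I$ of $\Gamma$ and set $C_{\rm c}=\{[u]:[u]\cap C\neq\emptyset\}$ and $I_{\rm c}=\{[u]:[u]\subseteq I\}$. Since every class is contained in $C\uplus I$, these two sets partition $V_{\Gamma_{\rm c}}$. If $[u]\neq[v]$ lie in $C_{\rm c}$, pick $u'\in[u]\cap C$ and $v'\in[v]\cap C$; then $u'\neq v'$ and $u'\sim v'$ because $C$ is a clique, so $[u]\sim[v]$, proving $C_{\rm c}$ is a clique. If $[u]\neq[v]$ lie in $I_{\rm c}$, pick $u\in[u]$ and $v\in[v]$; then $u,v\in I$ are distinct and nonadjacent, so $[u]\not\sim[v]$, proving $I_{\rm c}$ is independent. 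Hence $\Gamma_{\rm c}$ is split.

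For the converse I would start from a split partition $C\uplus I$ of $\Gamma_{\rm c}$ in which every vertex of $I$ is a singleton class, and lift it via $C'=\bigcup_{[u]\in C}[u]$ and $I'=\bigcup_{[u]\in I}[u]$; again $V_\Gamma=C'\uplus I'$. If $u\neq v$ in $C'$ and $[u]=[v]$, then $u\sim v$ since the class is a clique; if $[u]\neq[v]$, then $[u],[v]\in C$ gives $[u]\sim[v]$, hence $u\sim v$; so $C'$ is a clique. If $u\neq v$ in $I'$, then $[u]=\{u\}$ and $[v]=\{v\}$ are distinct vertices of $I$, hence nonadjacent in $\Gamma_{\rm c}$, hence $u\not\sim v$; so $I'$ is independent, and $\Gamma$ is split.

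There is no genuine obstacle here — the statement is essentially a bookkeeping exercise once the twin structure of $\equiv$-classes is in hand. The only point that must be handled carefully is precisely the singleton hypothesis in the converse: if some $[u]\in I$ had size at least two, it would be a nontrivial clique of $\Gamma$, and putting its vertices into $I'$ would destroy independence; so that hypothesis is not cosmetic but exactly what makes the lifting legitimate.
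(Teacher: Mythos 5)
Your proof is correct, and the paper itself offers no argument here (it declares the proposition an immediate consequence of the definition), so your careful unwinding of the $\equiv$-classes as cliques with common external neighbourhoods is exactly the intended reasoning, just written out in full. Your closing remark on why the singleton hypothesis is essential in the converse is also accurate.
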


The set of all element orders of a finite group $G$ is called the {\em spectrum} of $G$ and is denoted by $\omega(G)$. It is clear that the set $\omega(G)$ is {\em closed} and {\em partially ordered} by divisibility relation, hence, it is uniquely determined by $\mu(G)$, the subset of its {\em maximal} elements. For each natural number $n$ we denote by $\pi(n)$ the set of all prime divisors of $n$, and put $\pi(G)=\pi(|G|)$ which is called the {\em prime spectrum} of $G$. The spectrum of $G$ determines the prime graph ${\rm GK}(G)$ of $G$, whose vertex set is the prime spectrum of $G$, and two distinct vertices $p$ and $q$ are joined by an edge (briefly, adjacent) if and only if $pq\in \omega(G)$. For primes $r, s\in \pi(G)$, we will write $r\sim s$ if $r$ and $s$ are adjacent in ${\rm GK}(G)$. We denote by $s(G)$ the number of connected components of ${\rm GK}(G)$ and by $\pi_i=\pi_i(G)$, $i= 1, 2, \ldots, s(G)$, the set of vertices of $i$th connected component. We often identify a connected component of a prime graph with its vertex set, and vice versa. If $2\in \pi(G)$, then we assume that $2\in \pi_1(G)$.  We denote by $\mu_i= \mu_i(G)$ the set of all $n\in \mu(G)$ such that  $\pi(n)\subseteq \pi_i$. The vertex set of connected components of prime graphs associated with finite simple groups are listed in \cite{k} and \cite{w} (see the improved list in \cite{newmaz}).

The solvable graph $\mathcal{S}(G)$ of a group $G$ is a graph with vertex set $\pi(G)$ in which two distinct primes $r$ and $s$ are adjacent if
and only if $G$ has a solvable subgroup of order divisible by $pq$. Adjacency of two vertices $r$ and $s$ in a solvable graph $\mathcal{S}(G)$ is written by $r\approx s$.

%Before stating our main results, we need a few lemmas, which collect some technical stuff needed to perform our work.
Denote the  $i$th connected components of
${\rm GK}(G)$ by $${\rm GK}_i(G)=(\pi_i(G), E_i(G)),  \ \ i=1, 2,
\ldots, s(G).$$
In \cite{suz}, Suzuki studied the structure of prime graph
associated with a finite simple group, and proved the following
interesting result. It is worth stating that his proof of this
result does not use the classification of finite simple groups.

\begin{proposition}[Suzuki] {\rm (\cite[Theorem B]{suz})}\label{prop1}   Let $L$ be a
finite simple group whose prime graph ${\rm GK}(L)$ is
disconnected and let ${\rm GK}_i(G)$ be a connected component of
${\rm GK}(G)$ with $i\geqslant 2$. Then ${\rm GK}_i(G)$ is a
clique.
\end{proposition}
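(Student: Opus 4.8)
The plan is to reduce the statement to a single local fact about centralizers and then to feed that fact into the structure theory of simple groups carrying an ``isolated'' set of primes. The key elementary observation is: if $i\geqslant 2$ and $x$ is a nontrivial element whose order is a $\pi_i(L)$-number, then $C_L(x)$ is a $\pi_i(L)$-group. It is enough to verify this when $x$ has order $r^a$ for a single prime $r\in\pi_i(L)$, because an arbitrary $\pi_i(L)$-element commutes with, and hence has centralizer contained in the centralizer of, one of its nontrivial primary components. And if $\ell\neq r$ is a prime dividing $|C_L(x)|$, then by Cauchy's theorem $C_L(x)$ contains an element of order $\ell$ commuting with $x$, so $L$ contains an element of order $r\ell$; thus $\ell\sim r$ in ${\rm GK}(L)$, and therefore $\ell\in\pi_i(L)$.

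Next I would note that the assertion reduces to transitivity of adjacency inside the component: it suffices to show that whenever $r,s,t\in\pi_i(L)$ are distinct with $r\sim s$ and $s\sim t$, one also has $r\sim t$. Indeed, $\pi_i(L)$ is by definition a connected component of ${\rm GK}(L)$, so any two of its vertices are joined by a path lying in $\pi_i(L)$, and transitivity collapses such a path to a single edge. So I would fix distinct $r,s,t\in\pi_i(L)$ with $r\sim s$ and $s\sim t$, write an element of order $rs$ as a product $g_rg_s$ of commuting elements of orders $r$ and $s$ and an element of order $st$ as a product $h_sh_t$ of commuting elements of orders $s$ and $t$, and try to manufacture an element of order $rt$; by the centralizer fact above, all of $C_L(g_r)$, $C_L(g_s)$, $C_L(h_s)$, $C_L(h_t)$ are $\pi_i(L)$-groups.

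From here the argument becomes genuinely group-theoretic, and this step is what I expect to be the main obstacle. Working in a minimal counterexample $L$, one would use that for $r\in\pi_i(L)$ a Sylow $r$-subgroup $R$ lies inside the proper $\pi_i(L)$-subgroup $C_L(z)$ for any $1\neq z\in Z(R)$, and push this configuration up to a maximal subgroup $M\geqslant N_L(R)$. The goal is to prove that $M$ is \emph{strongly $\pi_i$-embedded}, meaning that every conjugate of $M$ whose intersection with $M$ has order divisible by some prime of $\pi_i(L)$ actually equals $M$; combined with the centralizer fact, this forces $M$ to contain the centralizer of each of its nontrivial $\pi_i(L)$-elements, so that $M$ controls the entire $\pi_i$-local structure of $L$. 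Simple groups admitting such a subgroup are severely restricted --- this is exactly where the Feit--Thompson--Bender--Glauberman local analysis (signalizer functors together with exceptional-character estimates) does the work --- and from the resulting description of $M$ one reads off that all primes of $\pi_i(L)$ divide the order of a single cyclic subgroup of $M$. In particular $rt\in\omega(L)$, contradicting the choice of $r,s,t$, and the proof is complete.

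As a sanity check and a fallback, one could instead invoke the Williams--Kondrat'ev classification of finite simple groups with disconnected prime graph, together with the explicit descriptions of the components $\pi_i(L)$ and the Vasil'ev--Vdovin adjacency criterion for groups of Lie type, and verify clique-ness of each component with $i\geqslant 2$ directly; but that uses the classification of finite simple groups, whereas the point of the statement is that it does not.
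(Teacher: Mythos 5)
Your two preliminary reductions are sound and are indeed the standard entry point: the observation that $C_L(x)$ is a $\pi_i(L)$-group for every nontrivial $\pi_i(L)$-element $x$ (when $i\geqslant 2$) follows exactly as you argue from Cauchy's theorem, and the reduction of clique-ness to transitivity of adjacency inside a connected component is correct. But the proof stops there. The entire content of the statement lies in the step you defer: producing, from $r\sim s$ and $s\sim t$, an element of order $rt$. Your two commuting decompositions $g_rg_s$ and $h_sh_t$ live in the centralizers of two possibly non-conjugate elements of order $s$, and nothing you have written forces these local subgroups to interact; the passage from ``$M$ contains $N_L(R)$'' to ``$M$ is strongly $\pi_i$-embedded'' to ``all primes of $\pi_i(L)$ divide the order of a single cyclic subgroup of $M$'' is precisely the Feit--Thompson--Bender--Glauberman local analysis (signalizer functors, exceptional characters) that occupies the bulk of Suzuki's long paper, and you invoke it as a black box rather than carrying it out. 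So what you have is an accurate map of where the difficulty sits, not a proof.

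For what it is worth, the paper does not prove this proposition either: it is quoted as Theorem B of Suzuki's article \cite{suz}, with the explicit remark that Suzuki's argument avoids the classification of finite simple groups. Your fallback route --- checking each component $\pi_i(L)$ with $i\geqslant 2$ against the Williams--Kondrat'ev lists and the Vasil'ev--Vdovin adjacency criterion --- would in principle yield a complete proof, but it rests on the classification and on a case-by-case verification you have not performed, and, as you yourself note, it forfeits the classification-free character that is the point of Suzuki's theorem.
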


 A similar result as Proposition {\rm \ref{prop1}} can
be found in {\rm \cite[Lemma 4]{km}}.  Note that Proposition $\ref{prop1}$ is true for
all {\em finite groups} not only for finite simple groups (see \cite{w}).
According to Proposition $\ref{prop1}$,  the prime
graph of an arbitrary finite simple group $G$ has the following form:
$${\rm GK}(G)=\bigoplus_{i=1}^{s}{\rm GK}_i(G)= {\rm GK}_1(G)\oplus K_{n_2}\oplus \cdots\oplus K_{n_s},$$
where $n_i=|\pi_i(G)|$ and $s=s(G)$.

In general case, if ${\rm
GK}(G)$ is a complete graph, then it may be viewed as a split
graph. For example, if $G$ is a nilpotent group or
$G=P_1\times P_2\times \cdots \times P_r$, where $P_i$'s are isomorphic nonabelian simple groups and  $r\geqslant 2$, then ${\rm GK}(G)$ is a complete
graph, and so is split.

In \cite{Foldes-Hammer}, the authors provided the following finite
forbidden subgraph characterization of split graphs.
\begin{proposition} [Forbidden Subgraph Characterization] {\rm (see \cite{Foldes-Hammer})} \label{forbidden} \
A graph is a split graph if and only if it contains no induced
subgraph isomorphic to $2K_2$ (two parallel edges), $C_4$ (a
square), or $C_5$ (a pentagon).
\end{proposition}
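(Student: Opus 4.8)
The plan is to prove the two implications separately. For the forward direction, I would first note that the family of split graphs is hereditary: if $V_\Gamma=C\uplus I$ is a split partition and $X\subseteq V_\Gamma$, then $(C\cap X)\uplus(I\cap X)$ is a split partition of $\Gamma[X]$. Hence it is enough to verify that none of $2K_2$, $C_4$, $C_5$ is itself a split graph. Each of these has clique number $2$ and independence number $2$ but at least four vertices, and running through the finitely many ways to write its vertex set as a clique together with an independent set shows in each instance that an edge is left inside the part meant to be independent. Therefore no split graph can contain an induced $2K_2$, $C_4$, or $C_5$.

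For the converse, suppose $\Gamma$ is $\{2K_2,C_4,C_5\}$-free; the goal is to produce a split partition. I would choose a clique $C$ of maximum size, and among all cliques of that size one for which $\Gamma[V_\Gamma\setminus C]$ has as few edges as possible. If $V_\Gamma\setminus C$ is independent then $(C,\,V_\Gamma\setminus C)$ is the required partition, so assume for contradiction that it contains an edge $xy$. Since $C$ is a maximum clique, $x$ has a non-neighbour $u\in C$ and $y$ has a non-neighbour $w\in C$, for otherwise $C\cup\{x\}$ or $C\cup\{y\}$ would be a larger clique. The core of the argument is then a case analysis on the adjacencies inside $\{u,w,x,y\}$. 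If this four-set already induces $2K_2$ or $C_4$ we are done; otherwise, after renaming, one is left with either the case that some vertex of $C$ is adjacent to neither $x$ nor $y$, or the case $u\neq w$ with $w$ adjacent to $x$ and $u$ non-adjacent to $y$.

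In each of these cases I would either enlarge $C$, contradicting maximality — this happens, for instance, when both $x$ and $y$ are adjacent to all of $C$ except a common vertex — or else deduce the existence of a further vertex $z\notin C$ with a controlled adjacency pattern to $u$, $x$, $y$. The latter is where the extremal choice of $C$ is used: swapping a single vertex of $C$ for $x$ or for $y$ produces another maximum clique, and comparing the numbers of edges lying outside the two cliques forces such a $z$ to exist. Then a small set such as $\{u,x,y,z\}$, or $\{u',x,y,z\}$ for a suitable $u'\in C$, possibly together with $w$, is shown to induce $2K_2$, $C_4$, or $C_5$ according to the remaining unknown adjacencies among these vertices, giving a contradiction in every branch. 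It follows that $V_\Gamma\setminus C$ is independent, which finishes the proof. (An alternative I would keep in reserve is to note that $\{2K_2,C_4,C_5\}$ is closed under complementation, that a graph is $\{2K_2,C_4,C_5\}$-free exactly when both it and its complement are chordal — using $\overline{C_4}=2K_2$, $\overline{C_5}=C_5$, and the fact that every longer induced cycle or its complement already contains an induced $2K_2$ or $C_4$ — and then to invoke the characterisation of split graphs as the graphs that are simultaneously chordal and co-chordal; but this merely relocates the work.)

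The main obstacle is precisely the case analysis in the converse. The naive inspection of the four vertices $u,w,x,y$ does not close on its own: configurations such as an induced $P_4$ or a paw are not forbidden, so one is forced to bring in the auxiliary vertex $z$ and to exploit the minimality of the number of edges outside $C$ in an essential way. Keeping the bookkeeping of the sub-cases straight, and recognising exactly in which branches the exclusion of $C_5$ — rather than merely of $2K_2$ and $C_4$ — is what makes the argument close, is the delicate part; everything else is routine.
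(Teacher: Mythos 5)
The paper does not prove this proposition at all: it is quoted from F\"{o}ldes and Hammer with a citation, so there is no in-paper argument to compare against. Your outline is essentially the classical proof of that cited result (the one in Golumbic's book): hereditariness plus the non-splitness of $2K_2$, $C_4$, $C_5$ for necessity, and for sufficiency a maximum clique $C$ chosen to minimise the number of edges of $\Gamma[V_\Gamma\setminus C]$, followed by a local analysis around an alleged edge $xy$ outside $C$. The plan is sound and does close, but as written the decisive case analysis is only described, not executed. For the record, the branch you flag as delicate does work out as follows: after ruling out ``crossing'' non-neighbours (which give a $C_4$ through $C$) and two common non-neighbours (which give a $2K_2$), one may assume $x$ has a unique non-neighbour $z$ in $C$ with $z$ also non-adjacent to $y$; swapping $z$ for $x$ and using edge-minimality produces $w\notin C$ with $w\sim z$, and one checks that $w\sim x$ forces a $C_4$ through a non-neighbour of $w$ in $C$, that $w\not\sim x$ and $w\not\sim y$ gives $2K_2$ on $\{x,y,z,w\}$, and that in the remaining case ($w\sim y$, $w\not\sim x$) a non-neighbour $u\in C\setminus\{z\}$ of $w$ yields an induced $C_5$ on $\{u,x,y,w,z\}$ if $u\not\sim y$ and an induced $C_4$ on $\{u,y,w,z\}$ if $u\sim y$. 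So your reserve observation about where $C_5$ enters is exactly right; the only shortfall of the proposal is that this bookkeeping is promised rather than carried out.
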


It follows from the definition (or the forbidden subgraph
characterization) that the complement, and every induced subgraph
of a split graph is split. Moreover, by  Proposition \ref{forbidden},
we immediately have the following.
\begin{corollary}\label{disconnected-split}
Let $\Gamma$ be a
split graph with a split partition $V=C\uplus I$. If $\Gamma$ has
more than one connected component, then the connected components
consist of the following possibilities: a connected component
containing $C$ and $\{ v$, a single
prime$\}\subseteq I$.
\end{corollary}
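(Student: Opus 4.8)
The plan is to read off the conclusion directly from the two defining properties of a split partition, using the forbidden-subgraph characterization (Proposition~\ref{forbidden}) only to organize matters. First I would dispose of the degenerate case $C=\emptyset$: then $V_\Gamma=I$ is independent, so $\Gamma$ has no edges at all, every connected component is a singleton, and the asserted description holds with the ``component containing $C$'' understood vacuously (or taken to be any one of them). So from now on assume $C\neq\emptyset$.

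Since $C$ is a clique, the induced subgraph $\Gamma[C]$ is connected, and therefore all vertices of $C$ lie in a single connected component of $\Gamma$; call it $\Gamma_1$. This is the component ``containing $C$'' in the statement. Now let $\Delta$ be any connected component of $\Gamma$ other than $\Gamma_1$. As the components of $\Gamma$ partition $V_\Gamma$ and $\Gamma_1$ already contains all of $C$, we get $\Delta\cap C=\emptyset$, i.e. $\Delta\subseteq I$. But $I$ is independent, so $\Gamma[\Delta]$ has no edges; being connected, $\Delta$ must then consist of a single vertex $v$, and $v\in I$. Hence every component of $\Gamma$ distinct from $\Gamma_1$ is a singleton $\{v\}$ with $v\in I$ (a single prime), which is exactly the claimed list of possibilities. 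As an alternative for $|C|\geqslant 2$, one can invoke Proposition~\ref{forbidden} directly: a component $\Delta\neq\Gamma_1$ containing an edge would, together with any edge of the clique $C$, induce a $2K_2$, contradicting splitness.

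I do not expect any real obstacle here: the statement is an immediate structural consequence of the facts that a nonempty clique is connected and that an independent set is edgeless. The only point needing a sliver of care is the bookkeeping in the boundary cases $C=\emptyset$ and $|C|=1$; the connectivity argument above handles these uniformly, whereas the $2K_2$-based reasoning needs $|C|\geqslant 2$, so I would present the connectivity version as the main line.
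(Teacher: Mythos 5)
Your proof is correct, and it is slightly more elementary than the route the paper indicates. The paper states the corollary as an immediate consequence of Proposition~\ref{forbidden}, i.e.\ the intended argument is the $2K_2$ one you mention only as an alternative: two distinct components each containing an edge would induce a forbidden $2K_2$. Your main line instead uses nothing but the split partition itself --- a nonempty clique is connected, so $C$ sits inside one component, and every other component lies in the independent set $I$ and is therefore a singleton. This buys you two small things: it works uniformly in the boundary cases $C=\emptyset$ and $|C|=1$ (where the $2K_2$ argument needs separate handling or at least a remark), and it shows the conclusion depends only on the existence of the given partition, not on the forbidden-subgraph characterization. Both arguments are essentially trivial, and your treatment of the degenerate cases is the only place where care is needed; you handled it correctly.
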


\begin{corollary}\label{three-split}
A graph with at most three vertices is always split. In
particular, the prime (resp. solvable) graph ${\rm GK}(G)$ (resp. $\mathcal{S}(G)$) of a group $G$,
with $|\pi(G)|\leqslant 3$, is split.
\end{corollary}

It follows immediately from the definition that if $G$ is a
solvable group, then $\mathcal{S}(G)$ is a complete graph, in
particular, $\mathcal{S}(R(G))$ is a complete graph, where $R(G)$
denotes the {\em solvable radical} of $G$. However, the converse is not
necessarily true. For example, if $G$ is a simple group, then the
solvable graph $\mathcal{S}(G\times G)$ is complete, while
$G\times G$ is not a solvable group. Thus from now on, we shall
restrict our attention to the solvable graphs associated with
{\em nonsolvable} groups. We continue with some elementary facts concerning the adjacency for
two prime divisors in solvable graph of subgroups and factor groups of $G$. The
following lemma is taken from \cite{abe-iiyori}.

\begin{lm}{\rm (\cite[Lemma 2]{abe-iiyori})}\label{subgraph}
Let $G$ be a finite group. Let $H$ and $N$ be two subgroups of $G$
with $N\unlhd G$. Then the following statements hold:
\begin{itemize}
\item[$(1)$] If $p$ and $q$ are adjacent in $\mathcal{S}(H)$
for $p, q\in\pi(H)$, then $p$ and $q$ are adjacent in
$\mathcal{S}(G)$, in other words, $\mathcal{S}(H)$ is a subgraph
of $\mathcal{S}(G)$.
\item[$(2)$] If $p$ and $q$ are adjacent in $\mathcal{S}(G/N)$
for $p, q\in\pi(G/N)$, then $p$ and $q$ are adjacent in
$\mathcal{S}(G)$, in other words, $\mathcal{S}(G/N)$ is a subgraph
of $\mathcal{S}(G)$.
\item[$(3)$] For $p\in\pi(N)$
and $q\in\pi(G)\backslash \pi(N)$, $p$ and $q$ are adjacent in
$\mathcal{S}(G)$.
\end{itemize}
\end{lm}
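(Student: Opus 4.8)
The plan is to prove the three parts separately, in increasing order of difficulty, and to isolate one group-theoretic device --- a Frattini-argument construction of a solvable subgroup --- that handles both part $(3)$ and the essential case of part $(2)$. Part $(1)$ is immediate: if $p$ and $q$ are adjacent in $\mathcal{S}(H)$ then some solvable $K\le H$ has $pq\mid|K|$, and since $K\le H\le G$ this same $K$ witnesses $p\approx q$ in $\mathcal{S}(G)$.

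For part $(3)$ I would take a Sylow $p$-subgroup $P_0$ of $N$, which is nontrivial because $p\in\pi(N)$. Since $N\unlhd G$, the Frattini argument gives $G=N\,N_G(P_0)$, so $|G:N|$ divides $|N_G(P_0)|$; as $q\mid|G|$ but $q\nmid|N|$, we get $q\mid|N_G(P_0)|$, and Cauchy's theorem supplies an element $y\in N_G(P_0)$ of order $q$. Then $H:=P_0\langle y\rangle$ is a subgroup (because $y$ normalizes $P_0$) in which $P_0$ is a normal $p$-subgroup with cyclic quotient $H/P_0$; hence $H$ is solvable and $|H|=|P_0|\,q$ is divisible by $pq$, so $p\approx q$ in $\mathcal{S}(G)$.

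Part $(2)$ is where I expect the real difficulty, precisely because $N$ need not be solvable, so one cannot simply take the full preimage of a witnessing solvable subgroup of $G/N$ and hope that it is solvable. The way around this: if $\bar{K}\le G/N$ is solvable with $pq\mid|\bar{K}|$ and $K$ is its full preimage in $G$, then by part $(1)$ it suffices to prove $p\approx q$ in $\mathcal{S}(K)$, so we may replace $G$ by $K$ and assume $G/N$ is solvable with $pq\mid|G/N|$; the goal becomes to exhibit a solvable subgroup of $G$ of order divisible by $pq$, which I would do by induction on $|G/N|$. If $N=1$ then $G$ itself works. Otherwise choose a minimal normal subgroup $M/N$ of $G/N$; since $G/N$ is solvable it is an elementary abelian $r$-group for some prime $r$, and as the claim is symmetric in $p$ and $q$ we may take $r=p$. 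Since $|M/N|$ is a power of $p$, the prime $q$ still divides $|G/M|$. If also $p\mid|G/M|$, then $(G,M)$ satisfies the same hypotheses with $|G/M|<|G/N|$ and induction closes the case; if $p\nmid|G/M|$, then $p\mid|M|$ and, with $M\unlhd G$ and $q\mid|G:M|$, the Frattini device of part $(3)$ applied to a Sylow $p$-subgroup of $M$ inside $G$ produces the required solvable subgroup. The only things needing care are that these two cases exhaust the possibilities and that the divisibilities $q\mid|G/M|$, $q\mid|N_G(P_0)|$, $p\mid|P_0|$ all hold; everything else is routine.
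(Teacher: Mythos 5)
The paper does not prove this lemma at all: it is imported verbatim as \cite[Lemma 2]{abe-iiyori}, so there is no in-paper argument to compare against. Your reconstruction is correct and self-contained, and the Frattini-argument device (Sylow $p$-subgroup $P_0$ of the normal subgroup, $G=N\,N_G(P_0)$, hence $q\mid |N_G(P_0)|$, then the solvable group $P_0\langle y\rangle$) is exactly the standard engine for both part $(3)$ and the terminal case of part $(2)$; the reduction of $(2)$ to the preimage $K$ via part $(1)$, followed by induction on $|G/N|$ through a minimal normal subgroup $M/N$ of the solvable quotient, is also the natural route.

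One sentence needs repair, though it does not break the proof. You write that since $M/N$ is an elementary abelian $r$-group and ``the claim is symmetric in $p$ and $q$ we may take $r=p$.'' Symmetry only lets you order $p$ and $q$; it cannot force $r\in\{p,q\}$, and $r$ may well be a third prime. The correct case split is: if $pq\mid |G/M|$ (which happens in particular whenever $r\notin\{p,q\}$, and also when $r=p$ but $p$ still divides $|G/M|$), recurse on $(G,M)$; otherwise one of $p,q$, say $p$ by symmetry, does not divide $|G/M|$, which forces $r=p$, gives $p\mid|M|$ and $q\mid|G/M|$, and the Frattini device finishes. Note also that the Frattini step here is used in slightly greater generality than part $(3)$ as stated: you do not need $q\notin\pi(M)$, only $q\mid |G/M|$, since $G=M\,N_G(P_0)$ gives $G/M\cong N_G(P_0)/(M\cap N_G(P_0))$ and hence $|G/M|$ divides $|N_G(P_0)|$. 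With that rewording the argument is complete; the induction terminates because $|G/M|<|G/N|$ while $pq$ continues to divide the quotient order, so the recursion must eventually land in the Frattini case.
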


\begin{corollary}\label{cor1} Let $G$ be a finite group and let $\Delta(G)$ be the set of vertices of the
solvable graph $\mathcal{S}(G)$ which are joined to all other
vertices. Then, we have:
\begin{itemize}
\item[{\rm (1)}] $\pi(R(G))\subseteq \Delta(G)$.
\item[{\rm (2)}] If $\mathcal{S}(G)$ has a special split partition $\pi(G)=C\uplus I$, then
$\Delta(G) \subseteq C$, in particular $\pi(R(G))\subseteq
C$.
\end{itemize}
\end{corollary}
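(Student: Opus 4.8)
The plan is to derive the whole statement quickly from Lemma~\ref{subgraph} and the definition of a special split partition, essentially by unwinding definitions.

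For part~(1), I would fix an arbitrary $p\in\pi(R(G))$ and an arbitrary $q\in\pi(G)$ with $q\neq p$, and show $p\approx q$, splitting into two cases according to whether $q$ divides $|R(G)|$. If $q\in\pi(R(G))$, then $R(G)$ itself is a solvable subgroup of $G$ whose order is divisible by $pq$, so $p\approx q$ directly from the definition of $\mathcal{S}(G)$. If $q\notin\pi(R(G))$, then, since $R(G)\unlhd G$ with $p\in\pi(R(G))$ and $q\in\pi(G)\setminus\pi(R(G))$, part~(3) of Lemma~\ref{subgraph} applied with $N=R(G)$ gives $p\approx q$ at once. In both cases $p$ is adjacent to every other vertex of $\mathcal{S}(G)$, i.e.\ $p\in\Delta(G)$; hence $\pi(R(G))\subseteq\Delta(G)$.

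For part~(2), I would let $\pi(G)=C\uplus I$ be a special split partition of $\mathcal{S}(G)$ and argue by contradiction, assuming some $p\in\Delta(G)$ lies in $I$. First I would observe that $C\neq\emptyset$: by the standing convention of this section $G$ is nonsolvable, so $\mathcal{S}(G)$ has at least three vertices and, by \cite{abe-iiyori}, is connected, hence has an edge; therefore the vertex set cannot coincide with the independent part $I$. Now, by the defining property of a special split partition, the vertex $p\in I$ is non-adjacent to at least one vertex $c\in C$, whereas $p\in\Delta(G)$ forces $p\approx c$ --- a contradiction. Thus $\Delta(G)\cap I=\emptyset$, that is $\Delta(G)\subseteq C$, and combining this with part~(1) yields $\pi(R(G))\subseteq\Delta(G)\subseteq C$.

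The argument is routine, and I expect no serious obstacle; the one point requiring a moment's attention is the remark that $C$ is nonempty in part~(2) (otherwise ``non-adjacent to at least one vertex of $C$'' is vacuously satisfied and the contradiction disappears), and this is exactly where nonsolvability of $G$ --- equivalently, connectedness of $\mathcal{S}(G)$ on more than one vertex --- enters.
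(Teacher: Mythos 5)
Your argument is correct and is essentially the paper's proof (which simply cites Lemma~\ref{subgraph}(3) together with completeness of $\mathcal{S}(R(G))$ for part (1), and the definition of a special split partition for part (2)), just written out in full detail. The only quibble is your aside about $C=\emptyset$: an existential statement over an empty set is false rather than vacuously true, so a special split partition with $C=\emptyset$ and $I\neq\emptyset$ cannot exist in the first place and the appeal to connectedness of $\mathcal{S}(G)$ is unnecessary.
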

\begin{proof} (1) This is immediate by applying Lemma \ref{subgraph} (3) to $R(G)$, and noting that $\mathcal{S}(R(G))$ is complete.

(2) The result follows by the definition and part (1).
\end{proof}

%%%%%%%%%%%%%%%%%%%%%%%%%%%%%%%%%%%%%%%%%%%%%%%%%

\section{\sc Alternating and Symmetric Groups}
The adjacency criterion for two prime divisors in the prime
graph of an alternating or symmetric group is obvious and can be stated as
follows (see \cite[Proposition 1.1]{vasile-v}).
\begin{proposition}\label{alt-symm} Let $n$ be a natural number and let $p$ and $q$ be two distinct odd primes
in $\pi(G)$, where $G$ is an alternating or symmetric group of degree $n$. Then, we have:
\begin{itemize}
\item[{\rm (1)}]  $p$ and $q$ are adjacent in ${\rm GK}(G)$  if and only if $p + q\leqslant n$;

\item[{\rm (2)}]  if $G$ is symmetric, then $2$ and $p$ are adjacent in ${\rm GK}(G)$ if and only if $2+p\leqslant
n$;

\item[{\rm (3)}] if $G$ is alternating, then $2$ and $p$ are adjacent in ${\rm GK}(G)$ if and only if $4+p\leqslant
n$.
\end{itemize}
\end{proposition}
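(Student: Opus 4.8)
The plan is to rely on the classical description of conjugacy in symmetric groups. Every $g\in S_n$ is a product of disjoint cycles, say of lengths $\ell_1,\ldots,\ell_k$ (regarding fixed points as $1$-cycles); then the order of $g$ equals $\operatorname{lcm}(\ell_1,\ldots,\ell_k)$, and $g$ is an even permutation if and only if $\sum_i(\ell_i-1)$ is even, i.e.\ if and only if the number of cycles of \emph{even} length occurring in $g$ is even. Consequently, for distinct primes $r,s$ one has $rs\in\omega(S_n)$ precisely when $S_n$ contains disjoint cycles whose lengths have least common multiple divisible by $rs$ and whose total length is at most $n$, and $rs\in\omega(A_n)$ precisely when such a system of cycles exists subject to the additional parity constraint. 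Each part of the proposition will be proved by first exhibiting a concrete element of minimal support realising the required order (sufficiency), and then checking that no system of cycles on fewer points can have order divisible by the relevant product (necessity).

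For $G=S_n$: given distinct odd primes $p,q$ with $p+q\leqslant n$, a $p$-cycle disjoint from a $q$-cycle has order $pq$ and support of size $p+q$, so $p\sim q$; conversely, if $pq\in\omega(S_n)$ then some cycle has length divisible by $p$ (hence of length $\geqslant p$) and some cycle has length divisible by $q$ (hence $\geqslant q$), and these two cycles must be distinct since a single cycle of length divisible by $pq$ would already require $\geqslant pq>p+q$ points, whence $n\geqslant p+q$. This proves (1) for $S_n$, and the identical argument with $q$ replaced by the prime $2$ (using a transposition in place of the $q$-cycle, and noting $2p>2+p$ because $p$ is odd) proves (2). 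For $A_n$, observe that a cycle of odd length is an even permutation, so the element ``$p$-cycle times disjoint $q$-cycle'' used above already lies in $A_n$; this gives sufficiency in (1) for $G=A_n$, while necessity is immediate since $\omega(A_n)\subseteq\omega(S_n)$.

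The one step that needs genuine care is (3), where the parity constraint bites. To obtain an element of $A_n$ of order divisible by $2p$ we must use at least one cycle of even length (to make the order even); but a single even-length cycle is an odd permutation, so by the sign formula the number of even-length cycles must be even, hence at least two, so the part of the support carrying those cycles has size at least $4$. In addition, some cycle must have length divisible by $p$; if that cycle has odd length it is disjoint from the even-length ones and contributes a further $\geqslant p$ points, while if it has even length it is itself divisible by $2p$ and hence has length $\geqslant 2p$. Comparing the two resulting configurations — two disjoint transpositions together with a disjoint $p$-cycle ($4+p$ points, order $2p$, even permutation), versus a $2p$-cycle together with another even-length cycle ($\geqslant 2p+2>4+p$ points) — shows that $4+p$ is the minimum possible support size and is attained by the first configuration; hence $2\sim p$ in ${\rm GK}(A_n)$ exactly when $n\geqslant 4+p$. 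Assembling the three cases yields the proposition. The only laborious ingredient is the short case analysis establishing minimality of the support in part (3); everything else is the routine arithmetic of cycle types.
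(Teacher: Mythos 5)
Your proof is correct and complete; the paper itself offers no argument for this proposition, merely citing it as Proposition~1.1 of Vasil'ev--Vdovin, and your cycle-type computation (order $=$ lcm of cycle lengths, parity $=$ parity of the number of even-length cycles, minimal support analysis) is exactly the standard argument underlying that citation, including the only delicate point, namely that an even element of order divisible by $2p$ needs at least two even-length cycles and hence support at least $4+p$. One cosmetic remark: in the necessity direction of (1) the two cycles need not actually be distinct --- if a single cycle has length divisible by $pq$ then $n\geqslant pq>p+q$ anyway --- so the correct phrasing is a two-case disjunction rather than a claim of distinctness, but your conclusion $n\geqslant p+q$ holds in both cases.
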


For a natural number $n$, we denote by $l_n$ the largest prime not
exceeding $n$ and by $s_n$ denote the smallest prime greater than
$n$. We also denote by $\lfloor x\rfloor$ the integer part of
$x$, i.e., the greatest integer less than or equal to $x$.
Proposition \ref{alt-symm} leads to the following result.

\begin{proposition} \label{th-3-2} Let $G$ be a symmetric or alternating group of degree $n\geqslant 2$. Then the prime graph and the solvable graph of $G$ are split graphs with a
split partition $C\uplus I$, where  $$C=\{2, 3, 5, \ldots,
l_{\lfloor\frac{n}{2}\rfloor}\}\textit{ and }
I=\{s_{\lfloor\frac{n}{2}\rfloor},  \ldots, l_n\}.$$ \end{proposition}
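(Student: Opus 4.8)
The plan is to verify that the claimed partition $C \uplus I$ is in fact a split partition, i.e., that $C = \{2,3,5,\dots,l_{\lfloor n/2\rfloor}\}$ induces a clique and $I = \{s_{\lfloor n/2\rfloor},\dots,l_n\}$ induces an independent set, in both $\mathrm{GK}(G)$ and $\mathcal{S}(G)$. Since $\mathrm{GK}(G)$ is a subgraph of $\mathcal{S}(G)$, the independence of $I$ in $\mathcal{S}(G)$ immediately gives it in $\mathrm{GK}(G)$, while the cliqueness of $C$ in $\mathrm{GK}(G)$ immediately gives it in $\mathcal{S}(G)$; so it suffices to prove that $C$ is a clique in $\mathrm{GK}(G)$ and that $I$ is independent in $\mathcal{S}(G)$. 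The first half is a direct application of Proposition~\ref{alt-symm}: any two distinct primes $p < q$ in $C$ satisfy $q \leqslant l_{\lfloor n/2\rfloor} \leqslant n/2$, hence $p + q \leqslant 2q \leqslant n$, so $p \sim q$; for the vertex $2 \in C$ one uses parts (2) and (3) of Proposition~\ref{alt-symm}, noting $2 + p \leqslant 4 + p \leqslant n/2 + 4 + n/2 \leqslant n$ once $n$ is not too small, and one handles the few small $n$ (say $n \leqslant 8$ or so) by hand using Corollary~\ref{three-split} where applicable.

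For the independence of $I$ in $\mathcal{S}(G)$ I would argue as follows. Every prime $p$ appearing in $I$ lies strictly between $\lfloor n/2\rfloor$ and $n$, so a Sylow $p$-subgroup of $A_n$ or $S_n$ has order exactly $p$ (since $2p > n$). If two distinct primes $p, q \in I$ were adjacent in $\mathcal{S}(G)$, then $G$ would contain a solvable subgroup $H$ whose order is divisible by $pq$; then $H$ has a Hall $\{p,q\}$-subgroup $K$, which is solvable of order $pq$ and hence cyclic (a group of order $pq$ with $p \ne q$ and both Sylow subgroups of prime order: if it is nonabelian it still contains an element of order $p$ and an element of order $q$ commuting is not needed — in fact any group of squarefree order $pq$ is supersolvable and contains an element of order $p$ and one of order $q$, and in our situation both Sylows are cyclic so $K$ itself has an element of order $pq$ iff it is abelian; but regardless $K$ embeds in $S_n$ using disjoint supports for its $p$-cycles and $q$-cycles, requiring $p + q \leqslant n$). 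The key point is that $p + q > \lfloor n/2\rfloor + \lfloor n/2\rfloor \geqslant n - 1$, so in fact $p + q \geqslant n+1 > n$ unless $n$ is odd and $p + q = n$; a short separate check rules out $p+q=n$ since that forces one of $p,q$ to be $2$, contradicting $p,q > \lfloor n/2 \rfloor \geqslant 1$ for $n \geqslant 4$. Hence no subgroup of $S_n$ can simultaneously have elements of order $p$ and order $q$ with commuting disjoint supports, and more care shows no solvable subgroup of order divisible by $pq$ exists at all, because such a subgroup would have a cyclic Sylow $p$-subgroup normalized with a cyclic Sylow $q$-subgroup forcing order $p+q \leqslant n$ on supports. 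This gives $I$ independent.

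The main obstacle is the second half: passing from ``no element of order $pq$'' (which is immediate from Proposition~\ref{alt-symm}(1) and $p+q>n$) to ``no solvable subgroup of order divisible by $pq$.'' One cannot simply quote the prime-graph criterion, since solvable subgroups can realize adjacencies that cyclic subgroups cannot. I expect the cleanest route is: let $H \leqslant S_n$ be solvable with $pq \mid |H|$; take a minimal such $H$ and analyze its action on $\{1,\dots,n\}$, using that a solvable transitive group on a set of size $m$ has order divisible only by primes dividing $m!$ in constrained ways, combined with the fact that $p, q > n/2$ forces any $p$- or $q$-element to act as a single cycle on a subset of size $p$ or $q$ respectively — and two such subsets of sizes $p$ and $q$ with $p+q > n$ must overlap, which is incompatible with the element orders in a group acting faithfully. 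Making this rigorous — in particular ruling out the possibility that a single element or a small solvable group secretly carries both primes via an imprimitive or product action — is the technical heart; one likely reduces to the primitive case and invokes the classification of solvable primitive permutation groups (affine type), where $|H| = n \cdot |H_0|$ with $n$ a prime power and $H_0 \leqslant \mathrm{GL}$, making the divisibility $pq \mid |H|$ with $p, q > n/2$ visibly impossible for $n \geqslant 4$. For the handful of small degrees $n \in \{2,3,4,5,6,7,8\}$ I would simply tabulate $\pi(G)$ and check splitness directly, using Corollary~\ref{three-split} to dispatch the cases with $|\pi(G)| \leqslant 3$.
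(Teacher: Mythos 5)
Your reduction to a Hall $\{p,q\}$-subgroup $K_0$ of order exactly $pq$ (using that Sylow $p$- and $q$-subgroups of $S_n$ have prime order because $2p,2q>n$) is exactly the paper's route, but the step where you conclude from this is where the gap lies. You assert that ``regardless $K$ embeds in $S_n$ using disjoint supports for its $p$-cycles and $q$-cycles, requiring $p+q\leqslant n$.'' That is false for nonabelian groups of order $pq$: if $q\mid p-1$, the Frobenius group $p{:}q$ embeds in $S_p$ (the $q$-element permutes the support of the $p$-cycle), so it needs only $p$ points, not $p+q$. This is precisely the phenomenon that makes the solvable graph strictly larger than the prime graph --- the paper's own Section 4 uses $11{:}5\leqslant M_{22}$ to get the edge $\{5,11\}$ in $\mathcal{S}(M_{22})$ even though $M_{22}$ has no element of order $55$. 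So the ``disjoint supports'' argument cannot carry the proof, and your earlier unsupported ``hence cyclic'' is the claim that actually needs justifying.

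The paper closes this with one line of arithmetic that you never supply: for $p,q\in I$ we have $\lfloor n/2\rfloor<p,q\leqslant n$, hence $q-1<2p$ and $p-1<2q$, so $p\nmid q-1$ and $q\nmid p-1$ (the only escape, $q=p+1$, would force one of them to be $2$, impossible here). Therefore the group $K_0$ of order $pq$ \emph{must} be cyclic, $G$ has an element of order $pq$, $p\sim q$ in ${\rm GK}(G)$, and Proposition~\ref{alt-symm} gives $p+q\leqslant n$, contradicting $p+q>2\lfloor n/2\rfloor\geqslant n$. With this observation your ``main obstacle'' evaporates, and the heavy machinery you propose as a fallback (minimal counterexample, reduction to primitive solvable permutation groups of affine type) is unnecessary --- and in any case you leave that route unexecuted, so as written the proposal does not constitute a proof of the independence of $I$ in $\mathcal{S}(G)$.
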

\begin{proof}  In view of Proposition \ref{alt-symm},  it is clear that $C$ is a complete set and $I$ an
independent set in ${\rm
GK}(G)$.
Also, from Proposition \ref{alt-symm}, it
is easy to see that $C\subset \pi(G)$ is a
complete set in $\mathcal{S}(G)$, because ${\rm
GK}(G)$ is a subgraph of $\mathcal{S}(G)$. Hence it
will be enough to prove that $I$ is an independent set in $\mathcal{S}(G)$. Let $p, q\in I$ and
$p\approx q$ in $\mathcal{S}(G)$. Then, from the
definition, there exists a solvable subgroup $K$ of $G$ whose
order is divisible by $pq$. We now consider a $\{p, q\}$-Hall
subgroup $K_0$ of $K$, which has order $pq$. Since $p\nmid q-1$
and $q\nmid p-1$, clearly $K_0$ is a cyclic group and so $p\sim
q$ in ${\rm GK}(G)$. It now follows from Proposition
\ref{alt-symm} that $p+q\leqslant n$, and this is a contradiction.
\end{proof}

Now Theorem A follows immediately from Proposition \ref{th-3-2}.

%%%%%%%%%%%%%%%%%%%%%%%%%%%%%%%%%%%%%%%%%%%%%%%%

\section{\sc Sporadic Simple Groups}
Information on the adjacency of vertices in the prime graph and solvable graph of
a sporadic simple group can be found in
\cite{atlas}.

\begin{proposition}\label{sporadic} The prime graph of any sporadic simple group is a split graph.
\end{proposition}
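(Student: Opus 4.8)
The plan is to proceed by a case analysis over the $26$ sporadic simple groups, using the known spectra recorded in the ATLAS \cite{atlas}, together with the forbidden subgraph characterization of Proposition \ref{forbidden}. For each sporadic group $G$ I would first read off $\pi(G)$ and the adjacency structure of ${\rm GK}(G)$ from $\mu(G)$ (the set of maximal element orders). Groups with $|\pi(G)|\leqslant 3$ are immediately split by Corollary \ref{three-split}, so those can be dismissed at once; this disposes of $M_{11}$, $M_{12}$, $M_{22}$, $M_{23}$, $J_1$, $J_2$, $J_3$ and a few others. For the remaining groups one must actually exhibit a split partition $\pi(G)=C\uplus I$.

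The key step for the larger groups is to use Williams--Kondrat'ev's description of the connected components of ${\rm GK}(G)$ for sporadic $G$, together with Proposition \ref{prop1}: every connected component $\pi_i(G)$ with $i\geqslant 2$ is already a clique, so the only genuine work is inside the first (``big-prime-$2$'') component $\pi_1(G)$. For sporadic groups the non-principal components $\pi_i$ with $i\geqslant 2$ are singletons or very small cliques of large primes, so by Corollary \ref{disconnected-split} it suffices to show ${\rm GK}_1(G)$ is split and then place those isolated small components into the independent set $I$. Thus I would reduce to proving each ${\rm GK}_1(G)$ is split, which I would do by picking $C$ to be a maximal clique inside $\pi_1(G)$ containing $2$ (typically $C=\{2,3,5,\dots\}$, the small primes, which are mutually adjacent because the relevant sporadic group contains an element whose order is divisible by several small primes, e.g. a product of small primes visible in $\mu(G)$) and letting $I=\pi_1(G)\setminus C$ be the remaining, larger primes; one then checks from the ATLAS that no two of those larger primes are adjacent.

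Concretely, the verification that $I$ is independent amounts to checking, for each pair of ``large'' primes $r,s$ in a given sporadic group, that $rs\notin\omega(G)$; this is a finite table lookup in \cite{atlas}. The verification that $C$ is a clique is similarly finite: one exhibits, for each pair of small primes in $C$, an element order in $\omega(G)$ divisible by their product. It is convenient to organize this by listing, for each sporadic group, the prime graph ${\rm GK}(G)$ explicitly (these are well documented) and simply pointing to the split partition.

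The main obstacle is purely bookkeeping: there is no conceptual difficulty, but one must handle all $26$ groups and, for the big ones ($Fi_{24}'$, $B$, $M$) with $|\pi(G)|$ as large as $15$, carefully verify that the chosen large-prime set $I$ is a coclique. The groups to watch are those whose prime graph has a large-prime part that is not itself a union of a clique and a few isolated vertices, i.e. those for which one might fear an induced $2K_2$, $C_4$ or $C_5$ among the mid-range primes; here one relies on the explicit adjacency data (equivalently, on the classification of cocliques of ${\rm GK}(G)$ from \cite{vasile-v,11VasVd.t}) to confirm that no such configuration occurs. In every case the forbidden subgraph test of Proposition \ref{forbidden} passes, giving the result.
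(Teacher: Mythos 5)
Your proposal takes essentially the same route as the paper: the paper's proof is exactly a case-by-case exhibition, from the ATLAS data, of an explicit split partition $C\uplus I$ for each of the $26$ sporadic groups, with $C$ a clique of small primes (always containing $2$, of size at most $4$) and $I$ the remaining large primes forming a coclique. One small factual slip: no sporadic simple group has $|\pi(G)|\leqslant 3$ (already $\pi(M_{11})=\{2,3,5,11\}$ and $\pi(J_2)=\{2,3,5,7\}$ have four elements), so Corollary \ref{three-split} disposes of none of the cases you list; this does not affect the argument, since those groups are handled by the same explicit-partition check as all the others.
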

\begin{proof} Using information from \cite{atlas}, we have determined a special
split partition for the prime graph of every sporadic simple
groups as in Table 1.
\end{proof}

\begin{center}
{
{\bf Table 1.} {\em A special split partition of prime graph of a sporadic simple group. }\\[0.5cm]
\begin{tabular}{|ll|l|l|}
\hline
Name & Symbol & $C$ & $I$\\
\hline
Mathieu groups& $M_{11}$ & $\{2\} $ & $\{3, 5, 11\}$ \\
&$M_{12}$ &  $\{2\}$  & $\{3, 5, 11\}$\\
&$M_{22}$ & $\{2\}$ &  $\{3, 5, 7, 11\}$\\
&$M_{23}$ & $\{2, 3\}$  &$\{5, 7, 11, 23\}$ \\
&$M_{24}$ & $\{2, 3\}$ & $\{5, 7, 11, 23\}$\\
Janko groups&$J_1$ & $\{2, 3\}$ & $\{5, 7, 11, 19\}$\\
&$J_2$ & $\{2, 3\}$ & $\{5, 7\}$\\
&$J_3$ & $\{2, 3\}$ & $\{5, 7, 19\}$\\
&$J_4$ & $\{2, 3, 5\}$& $\{7, 11, 23, 29, 31, 37, 43\}$ \\
Higman-Sims group&$HS$ & $\{2, 3\}$  & $\{5, 7, 11\}$\\
McLaughlin group&$M^cL$ & $\{2, 3\}$  & $\{5, 7, 11\}$\\
Suzuki group&$Suz$ & $\{2, 3\}$ & $\{5, 7, 11, 13\}$\\
Rudvalis group&$Ru$ & $\{2, 3\}$ & $\{5, 7, 13, 29\}$\\
Held group&$He$  & $\{2, 3\}$& $\{5, 7, 17\}$\\
Lyons group&$Ly$ & $\{2, 3\}$ & $\{5, 7, 11, 31, 37, 67\}$ \\
O'Nan group&$O'N$ &$\{2, 3\}$ & $\{5, 7, 11, 19, 31\}$ \\
Conway groups&$Co_1$ &$\{2, 3, 5\}$ & $\{7, 11, 13, 23\}$\\
&$Co_2$ & $\{2, 3\}$& $\{5, 7, 11, 23\}$\\
&$Co_3$  &$\{2, 3\}$ & $\{5, 7, 11, 23\}$ \\
Fischer groups&$Fi_{22}$ &$\{2, 3\}$ & $\{5, 7, 11, 13\}$\\
&$Fi_{23}$ & $\{2, 3, 5\}$& $\{7, 11, 13, 17, 23\}$ \\
&$Fi'_{24}$ & $\{2, 3, 5\}$ & $\{7, 11, 13, 17, 23, 29\}$ \\
Harada group&$F_5$ & $\{2, 3, 5\}$ & $\{7, 11, 19\}$\\
Thompson group &$F_3$ & $\{2, 3\}$ & $\{5, 7, 13, 19, 31\}$\\
Babymonster group&$F_2$ &$\{2, 3, 5\}$  & $\{7, 11, 13, 17, 19, 23, 31, 47\}$ \\
Monster group&$F_1$ & $\{2, 3, 5, 7\}$ & $\{11, 13, 17, 19, 23, 29, 31, 41, 47, 59, 71\}$\\
\hline
\end{tabular}}
\end{center}

\begin{proposition}\label{sporadic-solvable}
The solvable graph of any sporadic simple group is a split
graph, except for the groups  $M_{22}$, $M_{23}$, $M_{24}$, $Co_3$, $Co_2$,
$Fi_{23}$, $Fi_{24}'=F_{3+}$, $B=F_{2+}$, $M=F_1$ and $J_4$.
\end{proposition}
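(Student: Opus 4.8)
I would attack the two assertions separately: for every sporadic $G$ outside the listed ten I would exhibit a split partition of $\mathcal S(G)$, and for each of the ten I would produce a small forbidden induced subgraph and invoke Proposition~\ref{forbidden}. For the first part, for each such $G$ I would record, in a table analogous to Table~1, a \emph{special} split partition $\pi(G)=C\uplus I$ --- in general with a somewhat larger clique $C$ than the one used there for ${\rm GK}(G)$, since $\mathcal S(G)$ has more edges. By Corollary~\ref{cor1}(2) one must have $\Delta(G)\subseteq C$, which already pins down most of $C$; that $C$ is a clique is then checked by observing that its primes are pairwise adjacent already in ${\rm GK}(G)$ --- hence in $\mathcal S(G)$, which contains ${\rm GK}(G)$ as a subgraph by Lemma~\ref{subgraph} --- or else by pointing to one small solvable subgroup (typically a Frobenius group such as $7{:}3$ or $5{:}4$, or a $\{p,q\}$-Hall subgroup of a known maximal subgroup) realizing the missing edge. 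The substantive point is that $I$ is independent in $\mathcal S(G)$, and here I would mimic the proof of Proposition~\ref{th-3-2}: if $r,s\in I$ are distinct with $r\approx s$, take a solvable $K\le G$ with $rs\mid|G|$ a multiple of $|K|$ and a Hall $\{r,s\}$-subgroup $K_0$ of $K$; for the ``large'' primes $r\in I$ the Sylow $r$-subgroup of $G$ has order $r$ (read off $|G|$), so $|K_0|=rs$, and if moreover $r\nmid s-1$ and $s\nmid r-1$ then $K_0$ is cyclic, whence $r\sim s$ in ${\rm GK}(G)$, contradicting $r,s\in I$. Thus only the finitely many pairs $\{r,s\}\subseteq I$ with $r\mid s-1$, together with the finitely many pairs involving a prime whose Sylow subgroup is not of prime order (e.g.\ $3$ when $9\mid|G|$), need individual attention, and for each of these one verifies directly from the maximal subgroups listed in \cite{atlas} that $G$ contains no solvable $\{r,s\}$-subgroup, in particular no Frobenius group $C_s{:}C_r$.

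For the second part, for each of $M_{22}$, $M_{23}$, $M_{24}$, $Co_3$, $Co_2$, $Fi_{23}$, $Fi_{24}'$, $B$, $M$, $J_4$ I would exhibit two disjoint pairs of primes $\{r_1,r_2\}$ and $\{s_1,s_2\}$ with $r_1\approx r_2$ and $s_1\approx s_2$ but with no edge joining the two pairs, so that $\{r_1,r_2,s_1,s_2\}$ induces a $2K_2$ and Proposition~\ref{forbidden} forbids splitness. The two edges always come from Frobenius subgroups sitting inside maximal subgroups: for instance $\mathcal S(M_{22})$ has the edges $\{3,7\}$ (from $7{:}3\le A_7$) and $\{5,11\}$ (from $11{:}5\le L_2(11)$), and since none of $15,33,35,77$ lies in $\omega(M_{22})$ while the relevant $\{r,s\}$-Hall subgroups would be cyclic or abelian, the four cross pairs are non-edges, yielding an induced $2K_2$ on $\{3,5,7,11\}$. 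Analogous configurations for $M_{23}$ (e.g.\ $\{3,7\}$ from $7{:}3$ and $\{11,23\}$ from $23{:}11$) and for the remaining eight groups are extracted from \cite{atlas} in the same way; for one or two of the largest groups an induced $C_4$ or $C_5$ may be easier to locate than a $2K_2$.

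The main obstacle underlies both parts: deciding whether a prescribed solvable $\{r,s\}$-subgroup does or does not occur in $G$. There is no uniform device for this --- it rests on the detailed subgroup structure (maximal subgroups and their sections) tabulated in \cite{atlas}. The most delicate instances are (i) excluding a Frobenius group $C_s{:}C_r$ for a pair $\{r,s\}\subseteq I$ with $r\mid s-1$, and (ii) checking the non-edges of the forbidden subgraph for $Fi_{23}$, $Fi_{24}'$, $B$ and $M$, where both the number of primes and the number of maximal subgroups are large; the remainder is a finite, routine verification.
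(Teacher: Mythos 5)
Your proposal follows essentially the same route as the paper: for the groups outside the exceptional list one records a special split partition of $\mathcal S(G)$ in a table (the paper's Table~2), verified from the Atlas data, and for each of the ten exceptional groups one exhibits a four-element set of primes inducing a $2K_2$, with the two edges supplied by Frobenius subgroups (your $\{3,7\}$ from $7{:}3\le A_7$ and $\{5,11\}$ from $11{:}5\le L_2(11)$ for $M_{22}$ is exactly the paper's witness), and then invokes Proposition~\ref{forbidden}. The extra methodological detail you give for certifying independence of $I$ (the Hall-subgroup/cyclicity argument) is a sensible elaboration of the Atlas check the paper leaves implicit, but it does not change the approach.
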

\begin{proof} Using information from \cite{atlas}, we have determined a special
split partition for the solvable graph of any sporadic simple
group, except for the groups  $M_{22}$, $M_{23}$, $M_{24}$, $Co_3$, $Co_2$,
$Fi_{23}$, $Fi_{24}'=F_{3+}$, $B=F_{2+}$, $M=F_1$ and $J_4$, as
in Table 2. On the other hand, the solvable graph
$\mathcal{S}(M_{22})$ is depicted in Figure 1 (a). Recall that $7:3\leqslant L_2(7)\leqslant A_7\leqslant M_{22}$,
$11:5\leqslant L_2(11)\leqslant M_{22}$, $S_3\leqslant
A_5\leqslant A_7\leqslant M_{22}$, $D_{10}\leqslant A_5\leqslant
A_7\leqslant M_{22}$, and $2^3:7\leqslant 2^3:L_3(2)\leqslant
M_{22}$. Then the subgraph induced by  $\{3, 5, 7, 11\}$ is $2K_2$,
which is a forbidden subgraph for a split graph.

\vspace{0.95cm} {  \setlength{\unitlength}{4.5mm}
\begin{picture}(0,0)(-4,0)
\linethickness{0.3pt} %
\put(0,0){\circle*{0.4}}%11
\put(7,1.5){\circle*{0.4}}%3
\put(7,-1.5){\circle*{0.4}}%7
\put(2.4,0){\circle*{0.4}}%5
\put(4.8,0){\circle*{0.4}}%2

\put(-0.2,0.4){$11$}%
\put(7.3,1.7){$3$}%
\put(7.3,-2.2){$7$}%
\put(2.3,0.4){$5$}%
\put(4.5,0.4){$2$}%

\put(7,1.5){\line(-3,-2){2.27}}%5-2
\put(7,-1.5){\line(-3,2){2.27}}%5-3
\put(0,0){\line(1,0){4.8}}%3-7
\put(7,1.5){\line(0,-1){3}}%3-7
\put(15,0){\circle*{0.4}}%11
\put(22.2,0){\circle*{0.4}}%{3,7}
\put(17.4,0){\circle*{0.4}}%5
\put(19.8,0){\circle*{0.4}}%2
\put(14,0.6){$\{11\}$}%
\put(22.7,0){$\{3, 7\}$}%
\put(16.7,0.6){$\{5\}$}%
\put(19.1,0.6){$\{2\}$}%

\put(15,0){\line(1,0){7.2}}%3-7
\put(15,-2.5){ (b)  \  $\mathcal{S}_{\rm c}(M_{22})$}

\put(0,-2.5){ (a)  \  $\mathcal{S}(M_{22})$}
\put(1.5,-4){ {\bf Fig. 1}  \  The solvable graph and its compact form of $M_{22}$.}
\end{picture}}\vspace{2.5cm}

Similarly, for the
other sporadic groups, we have also determined a subset $W$ of $\pi(G)$ below,
so that the subgraph induced by $W$ is $2K_2$:
\begin{center}
\begin{tabular}{ll|l}
\hline
Name & Symbol(s) & $W$\\
\hline
Mathieu & $M_{22}$  & $\{3, 5, 7, 11\}$  \\
Mathieu & $M_{23}$ &   $\{2, 7, 11, 23\}$\\
Mathieu & $M_{24}$  &  $\{3, 7, 11, 23\}$ \\
Conway& $Co_{3}$  &  $\{3, 7, 11, 23\}$ \\
Conway& $Co_{2}$  &  $\{3, 7, 11, 23\}$ \\
Fischer& $Fi_{23}$ &   $\{3, 7, 11, 23\}$ \\
Fischer& $Fi_{24}'=F_{3+}$  &   $\{7, 11, 23, 29\}$ \\
Fischer/Sims, Leon & $B=F_{2+}$ &  $\{7, 11, 23, 29\}$ \\
Fischer, Griess &$M=F_1$ &   $\{7, 29, 59, 71\}$  \\
Janko/Norton, Parker, Benson, Conway, Thackray &$J_4$ & $\{7, 11, 23, 43\}$ \\
\hline
\end{tabular}\end{center}
The proof is complete. \end{proof}

\begin{center}
{\bf Table 2.} {\em A special split partition of solvable graph of a sporadic simple group. }\\[0.5cm]
\begin{tabular}{|ll|l|l|}
\hline
Name & Symbol(s) & $C$ & $I$\\
\hline
Mathieu & $M_{11}$ & $\{2, 5\} $ & $\{3, 11\}$  \\
Mathieu&$M_{12}$ &  $\{2, 5\}$  & $\{3, 11\}$   \\
Hall, Janko & $J_2=HJ=F_{5-}$ & $\{2, 3\}$ & $\{5, 7\}$\\
Suzuki &$Suz$ & $\{2, 3, 5\}$ & $\{7, 11, 13\}$ \\
Higman, Sims&$HS$ & $\{2, 3, 5\}$  & $\{ 7, 11\}$ \\
McLaughlin&$M^cL$ & $\{2, 3, 5\}$  & $\{7, 11\}$ \\
Conway, Leech& $Co_1=F_{2-}$ &$\{2, 3, 5, 11\}$ & $\{7, 13, 23\}$ \\
Held/Higmann, McKay&$He$  & $\{2, 3\}$& $\{5, 7, 17\}$\\
Fischer&$Fi_{22}$ &$\{2, 3, 5\}$ & $\{7, 11, 13\}$\\
Harada, Norton/Smith&$HN=F_{5+}$ & $\{2, 3, 5, 7\}$ & $\{11, 19\}$\\
Thompson/Smith  &$Th=F_{3|3}$ & $\{2, 3, 5\}$ & $\{7, 13, 19, 31\}$ \\
Janko& $J_1$ & $\{2, 3, 5\}$ & $\{7, 11, 19\}$\\
O'Nan/Sims &$O'N$ &$\{2, 3, 5\}$ & $\{7, 11, 19, 31\}$ \\
Janko/Higmann, McKay&$J_3$ & $\{2, 3, 5\}$ & $\{7, 19\}$\\
Lyons/Sims &$Ly$ & $\{2, 3, 11\}$ & $\{5, 7, 31, 37, 67\}$ \\
Rudvalis/Conway, Wales &$Ru$ & $\{2, 3, 7\}$ & $\{5, 13, 29\}$ \\
\hline
\end{tabular}
\end{center}
\vspace{0.4cm}

%Note that the compact form ${\cal S}_{\rm c}(M_{22})$ is a path of length 3  which is split (Figure 1 (b)). On the other hand, if  $G$ is isomorphic to one of the groups:  $M_{23}$, $M_{24}$, $Co_3$, $Co_2$, $Fi_{23}$, $Fi_{24}'=F_{3+}$, $B=F_{2+}$, $M=F_1$ or $J_4$, then  ${\cal S}_{\rm c}(G)={\cal S}(G)$, which is not split.
Propositions \ref{sporadic} and \ref{sporadic-solvable} imply the statement of Theorem B.
Proposition \ref{sporadic} also yields that the compact form ${\rm GK_c}(L)$ of the prime graph of any sporadic group is split. The following proposition is concerned with the compact form ${\cal S}_{\rm c}(G)$ of the solvable graph of a sporadic group.
\begin{proposition}\label{Sc_sporadic}
The graph ${\cal S}_{\rm c}(G)$ of any sporadic simple group, except $M_{23}$, $M_{24}$, $Co_3$, $Co_2$,
$Fi_{23}$, $Fi_{24}'=F_{3+}$, $B=F_{2+}$, $M=F_1$ and $J_4$, is split.
\end{proposition}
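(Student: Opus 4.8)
The plan is to reduce almost everything to Proposition~\ref{p-2.1} and the computations already carried out in Proposition~\ref{sporadic-solvable}, isolating the single genuinely new case, namely $M_{22}$. First I would dispose of every sporadic group $G$ for which $\mathcal{S}(G)$ is itself split, that is, every sporadic group except the ten groups $M_{22}$, $M_{23}$, $M_{24}$, $Co_3$, $Co_2$, $Fi_{23}$, $Fi_{24}'$, $B$, $M$, $J_4$ excluded in Proposition~\ref{sporadic-solvable}: for all of these, $\mathcal{S}_{\rm c}(G)$ is split by Proposition~\ref{p-2.1}. This already handles every sporadic group not in the list of nine exceptions of the present statement, so it remains only to examine these ten special groups, and of them only $M_{22}$ must be shown to have split compact form.

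Next I would treat $M_{22}$ directly. Using the adjacencies recorded in the proof of Proposition~\ref{sporadic-solvable} (equivalently, reading off Figure~1(a)), I would compute the balls $r^{\bot}$ of radius $1$ for $r\in\{2,3,5,7,11\}$. One finds $3^{\bot}=7^{\bot}=\{2,3,7\}$, while $2^{\bot}=\{2,3,5,7\}$, $5^{\bot}=\{2,5,11\}$ and $11^{\bot}=\{5,11\}$ are pairwise distinct and distinct from the former. Hence $3\equiv 7$ is the only nontrivial identification, $\mathcal{S}_{\rm c}(M_{22})$ has vertex set $\{\{2\},\{5\},\{11\},\{3,7\}\}$, and it is the path $\{11\}-\{5\}-\{2\}-\{3,7\}$ of Figure~1(b). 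Taking $C=\{\{5\},\{2\}\}$ and $I=\{\{11\},\{3,7\}\}$ gives a (special) split partition, so $\mathcal{S}_{\rm c}(M_{22})$ is split.

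Finally, for the nine remaining groups I would show $\mathcal{S}_{\rm c}(G)$ is \emph{not} split, so that the list of exceptions is exactly the one stated. The key point is an elementary observation about compact forms: if vertices $a,b,c,d$ induce a $2K_2$ in a graph $\Gamma$ (say with edges $\{a,b\}$ and $\{c,d\}$ and no further edges), then they have pairwise distinct balls of radius $1$ (for instance $b\in a^{\bot}$ but $b\notin c^{\bot}$), hence they map to four distinct vertices of $\Gamma_{\rm c}$; since adjacency in $\Gamma_{\rm c}$ is well defined and coincides with that of $\Gamma$ on representatives, these four classes again induce a $2K_2$ in $\Gamma_{\rm c}$. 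The proof of Proposition~\ref{sporadic-solvable} exhibits, for each of $M_{23}$, $M_{24}$, $Co_3$, $Co_2$, $Fi_{23}$, $Fi_{24}'$, $B$, $M$, $J_4$, a four-element set $W\subseteq\pi(G)$ with $\mathcal{S}(G)[W]\cong 2K_2$; applying the observation to $W$ shows $\mathcal{S}_{\rm c}(G)$ contains an induced $2K_2$, and so is not split by Proposition~\ref{forbidden}.

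The hard part is really just the bookkeeping around the single transitional group $M_{22}$: one must verify not only that $\mathcal{S}(M_{22})$ is non-split but also that collapsing $3$ with $7$ is the \emph{only} identification that occurs and that the resulting four-vertex graph is genuinely split. Everything else is mechanical, following from Propositions~\ref{p-2.1}, \ref{sporadic-solvable} and \ref{forbidden} together with the one-line ``$2K_2$ survives in the compact form'' lemma.
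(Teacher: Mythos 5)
Your first two steps coincide with the paper's proof: the groups of Table~2 are handled by Proposition~\ref{p-2.1}, and ${\cal S}_{\rm c}(M_{22})$ is computed directly to be the path $\{11\}-\{5\}-\{2\}-\{3,7\}$ (your ball computation is correct). The gap is in your third step, and it is a genuine one: the ``one-line lemma'' that an induced $2K_2$ survives in the compact form is false. Your justification that the four balls are pairwise distinct (``$b\in a^{\bot}$ but $b\notin c^{\bot}$'') only separates \emph{non-adjacent} pairs; for the two endpoints $a,b$ of an edge of the $2K_2$ nothing prevents $a^{\bot}=b^{\bot}$, in which case $a$ and $b$ collapse to a single vertex and the $2K_2$ need not persist. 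The paper itself contains the counterexample, and you even verify it two paragraphs earlier: $\{3,5,7,11\}$ induces a $2K_2$ in $\mathcal{S}(M_{22})$ (edges $\{3,7\}$ and $\{5,11\}$), yet $3^{\bot}=7^{\bot}=\{2,3,7\}$, so $3\equiv 7$ and ${\cal S}_{\rm c}(M_{22})$ is a split path. So your lemma, applied blindly to $M_{22}$, would ``prove'' that ${\cal S}_{\rm c}(M_{22})$ is non-split, contradicting your own (correct) computation.

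To repair the argument for the nine exceptional groups you must additionally check, for each exhibited $W$ with edges $\{a,b\}$ and $\{c,d\}$, that $a\not\equiv b$ and $c\not\equiv d$ in $\mathcal{S}(G)$; this cannot be read off from the induced subgraph on $W$ alone and requires a witness vertex outside $W$ adjacent to one endpoint but not the other (or some other use of the full solvable graph). The paper takes a different and stronger route here: it asserts that for these nine groups no identifications occur at all, i.e.\ ${\cal S}_{\rm c}(G)=\mathcal{S}(G)$, whence non-splitness of $\mathcal{S}(G)$ transfers verbatim. Either way, some additional ATLAS-level verification beyond the sets $W$ is needed, and your proposal as written omits it.
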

\begin{proof}
By Proposition \ref{p-2.1}, the compact form of a split graph is split. Therefore, the graph ${\cal S}_{\rm c}(G)$ is split for the groups $G$ listed in Table 2. The compact form ${\cal S}_{\rm c}(M_{22})$ is a path of length 3  which is split (Figure 1 (b)). On the other hand, if  $G$ is isomorphic to one of the groups:  $M_{23}$, $M_{24}$, $Co_3$, $Co_2$, $Fi_{23}$, $Fi_{24}'=F_{3+}$, $B=F_{2+}$, $M=F_1$ or $J_4$, then  ${\cal S}_{\rm c}(G)={\cal S}(G)$, which is not split.
\end{proof}

In the sequel, we need only consider the simple groups of Lie
type.

%%%%%%%%%%%%%%%%%%%%%%%%%%%%%%%%%%%%%%%%%%%%%%%

\section{\sc Preliminary Results on the Groups of Lie Type}\label{pre-Lie}
The greatest common divisor of natural numbers $m$ and $n$ is denoted by $(m, n)$.
If $n$ is a nonzero integer and $r$ is an odd prime with $(r, n) = 1$, then $e(r, n)$ denotes the multiplicative order of $n$ modulo $r$,  i.e., a minimal natural number $k$ with $n^k\equiv 1\pmod{r}$.
Given an odd integer $n$, we put $e(2, n) = 1$ if $n\equiv 1\pmod{4}$ and put $e(2,n) = 2$ if $n\equiv 3\pmod{4}$.
Fix an integer $n$ with $|n|>1$. A prime $r$ with $e(r, n)=i$ is called a {\em primitive prime divisor of} $n^i-1$.  We write $r_i(n)$  to denote some primitive prime divisor of $n^i-1$, if such a prime exists, and $R_i(n)$ to denote the set of all such divisors. Instead of $r_i(n)$ and $R_i(n)$ we simply write $r_i$ and $R_i$ if it does not lead to confusion.  Bang  \cite{Bang} and Zsigmondy \cite{Zsigmondy} proved that primitive prime divisors exist except for a few cases$^1$\footnote{$^1$In fact, Bang \cite{Bang} proved in 1886 that $n^i-1$ has a primitive prime divisor for all $n\geqslant 2$ and $i>2$ except for $n=2$ and $i=6$.
Then, Zsigmondy \cite{Zsigmondy} proved in 1892 that for coprime integers $a> b\geqslant 1$  and $i> 2$, there exists a prime $r$ dividing $a^i-b^i$ but not $a^k-b^k$ for $1\leqslant k<i$, except when $a=2$, $b=1$, and $i=6$.}\!\!\!\!.
\begin{theorem}\label{zsig}  {\rm  (Bang --Zsigmondy)}.
Let $n$ and $i$ be integers satisfying $|n|>1$ and $i\geqslant 1$. Then $R_i(n)\neq \emptyset$, except when
$(n, i)\in \{(2, 1), (2, 6), (-2,2), (-2,3), (3, 1), (-3,2)\}$.
\end{theorem}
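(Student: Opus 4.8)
The final statement to prove is Theorem~\ref{zsig}, the Bang--Zsigmondy theorem: for integers $n$ and $i$ with $|n|>1$ and $i\geqslant 1$, the set $R_i(n)$ of primitive prime divisors of $n^i-1$ is nonempty, except for the six listed pairs $(n,i)$.

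\medskip

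The plan is to reduce everything to the classical statement about $a^i - b^i$ for coprime positive integers, which is quoted in the footnote, and then handle the small exponents $i \in \{1,2\}$ together with the sign of $n$ by direct elementary arguments. First I would set up notation: writing $\Phi_i$ for the $i$-th cyclotomic polynomial, recall that $n^i - 1 = \prod_{d \mid i}\Phi_d(n)$, and that a prime $r$ is a primitive prime divisor of $n^i-1$ precisely when $r \mid n^i - 1$ but $r \nmid n^j-1$ for all $1 \leqslant j < i$; equivalently $e(r,n) = i$. The standard fact I would invoke is that any prime $r \mid \Phi_i(n)$ satisfies either $e(r,n)=i$ or $r \mid i$ (and in the latter case $r$ is the largest prime divisor of $i$ and $r^2 \nmid \Phi_i(n)$). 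So a primitive prime divisor exists iff $\Phi_i(n)$ is not a power of the largest prime divisor of $i$ (with exponent at most $1$ once $i>1$), i.e. essentially iff $|\Phi_i(n)| > $ that prime, or $i$ has no prime divisor dividing $\Phi_i(n)$ at all.

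\medskip

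Next I would split by the value of $i$. For $i \geqslant 3$: if $n \geqslant 2$, apply the Bang case directly — $n^i-1$ has a primitive prime divisor unless $(n,i)=(2,6)$, which is in our exceptional list. If $n \leqslant -2$, write $m = -n \geqslant 2$; then $n^i - 1 = (-1)^i m^i - 1$. For $i$ even this is $m^i - 1$ and one must check that a primitive prime divisor of $m^i-1$ (with respect to $m$) is still primitive with respect to $n$ — here $e(r,n)$ divides $2i$ and one argues $e(r,n)=i$ using that $i$ is even, so the exceptional pair $(m,i)=(2,6)$ reappears as... wait, $(-2,6)$ is \emph{not} in the list, so one must verify $2^6-1=63=9\cdot 7$ does give a primitive divisor for $n=-2$: indeed $e(7,-2)=6$ since the order of $-2$ mod $7$ is $6$, so no exception is needed. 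For $i$ odd and $n=-m$, $n^i-1 = -(m^i+1) = -\frac{m^{2i}-1}{m^i-1}$, and a primitive prime divisor of $m^{2i}-1$ works; the Bang exception $(2,12)$ is not relevant, and no new exceptions arise for $i \geqslant 3$ odd. This is where the bookkeeping is heaviest and the main obstacle lies: carefully tracking how $e(r,n)$ relates to $e(r,|n|)$ under the sign change, and confirming the exception list is exactly the six claimed pairs and no more.

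\medskip

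Finally, the cases $i=1$ and $i=2$ are handled by hand. For $i=1$: $R_1(n) \neq \emptyset$ iff $|n-1| > 1$ iff $n \notin \{2, 0, -2, 1, -1\}$; with $|n|>1$ this excludes exactly $n = 2$ and $n = -2$, but $n^1 - 1 = -3$ for $n=-2$ has the primitive divisor $3$ (every prime is primitive at $i=1$ as long as $n-1 \neq \pm 1$), so only $(2,1)$ is genuinely exceptional — wait, $(-2,2)$ and $(-2,3)$ and $(-3,2)$ are in the list, so I need $i=1$ to only contribute $(2,1)$ and $(3,1)$: for $n=3$, $3^1-1=2$, primitive divisor $2$ exists, so $(3,1)$ should \emph{not} be excluded by this reasoning — the resolution is that the convention for $e(2,n)$ forces $e(2,3)=2$ not $1$, since $3 \equiv 3 \pmod 4$, so $2$ is \emph{not} a primitive prime divisor of $3^1-1$ and $R_1(3)=\emptyset$, explaining $(3,1)$. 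Similarly for $i=2$: $n^2-1=(n-1)(n+1)$, and using $e(2,n)$ conventions and the factorizations $2^2-1=3$, $(-2)^2-1=3$, $(-3)^2-1=8$, one checks $R_2(n)=\emptyset$ exactly for $n \in \{2,-2,-3\}$ while all other $|n|>1$ yield a genuine primitive divisor. Assembling the $i=1$, $i=2$, and $i\geqslant 3$ analyses gives precisely the six exceptional pairs, completing the proof.
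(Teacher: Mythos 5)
The paper does not prove this statement at all: it is quoted as the classical Bang--Zsigmondy theorem with references to \cite{Bang} and \cite{Zsigmondy}, and the footnote only records the classical formulations for a positive base. So your task is really to derive the paper's version (negative bases, the special convention for $e(2,n)$, and the exact six-element exception list) from the classical one, and your overall strategy --- invoke Bang for $i\geqslant 3$ and positive $n$, transfer to negative $n$ by relating $e(r,-m)$ to $e(r,m)$, and settle $i\in\{1,2\}$ by hand using the $e(2,n)$ convention --- is the right reduction. However, as written the proposal contains two concrete errors.

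First, in the case $i=2$ you assert that $R_2(n)=\emptyset$ exactly for $n\in\{2,-2,-3\}$. This is false for $n=2$: we have $2^2-1=3$ and $e(3,2)=2$ (since $3\nmid 2-1$), so $3\in R_2(2)$ and $(2,2)$ is correctly \emph{absent} from the theorem's exception list. Your bookkeeping would wrongly insert it. Second, in the case $i\geqslant 3$ even with $n=-m$, the claim that a classical primitive prime divisor $r$ of $m^i-1$ satisfies $e(r,-m)=i$ ``using that $i$ is even'' fails whenever $i\equiv 2\pmod 4$: such an $r$ satisfies $m^{i/2}\equiv -1\pmod r$, hence $(-m)^{i/2}=(-1)^{i/2}m^{i/2}\equiv(-1)^{i/2+1}\equiv 1$ because $i/2$ is odd, so $e(r,-m)=i/2<i$. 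A concrete instance: $11$ is the classical primitive prime divisor of $2^{10}-1$, but $e(11,-2)=5$, so $11\notin R_{10}(-2)$. The correct repair is to observe that for $i\equiv 2\pmod 4$ one has $R_i(-m)=R_{i/2}(m)$ (your own verification that $7\in R_6(-2)$ while $e(7,2)=3$ is exactly this phenomenon), and then apply Bang to the odd exponent $i/2\geqslant 3$, which has no exceptions; this still yields $R_i(-m)\neq\emptyset$, but it is a different argument from the one you sketch, and without it the case $i\equiv 2\pmod 4$, $n<0$ is not covered. The cases $i$ odd and $i\equiv 0\pmod 4$ are handled correctly in your outline.
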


Theorem \ref{zsig} has many applications; for instance,  see \cite{Artin} for applications of primitive prime divisors in finite group theory. In what follows, we will concentrate on the case when $L={^dL}_l(q)$ is a simple
group of Lie type of rank $l$ over the field with $q$ elements.  Our notation for these groups is borrowed from \cite{atlas}. In view of \cite[Thms. 9.4.10, 14.3.1]{Carter},
 the order of any finite simple group of Lie type $L$ of rank $l$ over the field ${\rm GF}(q)$ of characteristic $p$ is equal to
$|L|=q^N(q^{n_1}\pm 1) (q^{n_2}\pm 1) \cdots (q^{n_l}\pm 1)/d$ (see Table 3).
Therefore any prime divisor $r$ of $|L|$ distinct from the characteristic $p$ is a primitive divisor for $(q, i)$, for some natural number $i$. Thus Lemma  \ref{zsig} allows us to find prime divisors of $|L|$. Moreover, if $L$ is neither a Suzuki group nor a Ree group, Lemmas 1.2 and 1.3
in \cite{vasile-v} imply that for a fixed $i$, every two primitive prime divisors for $(q, i)$ are adjacent in ${\rm GK}(L)$.

Given a set of primes $\pi$ and an integer $a$, denote by $(a)_{\pi}$ the \emph{$\pi$-part} of $a$, i.e., the greatest divisor $m$ of $a$ such that $\pi(m)\subseteq\pi$.
\begin{lm}\label{divisors}
Let $q=p^a$, where $p$ is a prime and $a\geqslant 2$. Let $k>1$ be a natural number, such that $\pi(a)\nsubseteq\pi(k)$. Denote $a'=(a)_{\pi(k)}$ and assume that the sets $R_{ka}(p)$ and $R_{ka'}(p)$ are nonempty. Then $|R_k(q)|>1$.
\end{lm}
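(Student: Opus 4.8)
The plan is to relate primitive prime divisors of $q^k-1 = p^{ak}-1$ to those of $p^{ka}-1$ and $p^{ka'}-1$, and then count. First I would recall the elementary fact that $r \in R_k(q)$ means $r$ is a primitive prime divisor of $q^k - 1 = (p^a)^k - 1 = p^{ak} - 1$, i.e. $e(r,p) \mid ak$ but $e(r,p) \nmid aj$ for $j < k$; equivalently $e(r,p)$ divides $ak$ but not $a\cdot\gcd(j,k)$ for proper divisors, so in fact $e(r,p)$ is a multiple of $a/\gcd(a, e(r,p))$ times $k$ in the appropriate sense. The clean way to phrase it: $r \mid q^k-1$ and $r \nmid q^j - 1$ for $1 \le j < k$. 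Write $e = e(r,p)$; then $r \mid q^j - 1 = p^{aj}-1$ iff $e \mid aj$, so $r \in R_k(q)$ iff $k$ is the least positive integer with $e \mid ak$, i.e. $k = e/\gcd(e,a)$. Thus $R_k(q) = \bigcup_{e:\, e/\gcd(e,a) = k} R_e(p)$, a union over those $e$ whose ``$a$-reduced order'' equals $k$.

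Next I would exhibit two explicitly different values of $e$ in this index set, namely $e = ka$ and $e = ka'$, where $a' = (a)_{\pi(k)}$. For $e = ka$: $\gcd(ka, a) = a$, so $ka/\gcd(ka,a) = k$, hence $R_{ka}(p) \subseteq R_k(q)$. For $e = ka'$: here I need $\gcd(ka', a) = a'$. Since $a' = (a)_{\pi(k)}$ divides $a$, write $a = a' a''$ where $\pi(a'') \cap \pi(k) = \emptyset$ (that is, $a''$ is the $\pi(k)'$-part of $a$). Then $\gcd(ka', a) = a' \gcd(ka', a'') $... more carefully: $\gcd(ka', a'a'') = a'\gcd(ka'/a', a'') $ only if $a' \mid ka'$, which holds, giving $\gcd(ka',a) = a'\gcd(k, a'')$ — wait, that is $a' \cdot \gcd(k a'/\!\gcd,\ldots)$; the honest computation is $\gcd(ka', a'a'')$: every prime in $a''$ is coprime to $k$, and its exponent in $a''$ is its exponent in $a$, whereas its exponent in $ka'$ equals its exponent in $a'$ which is $0$ (since $\pi(a') \subseteq \pi(k)$ is disjoint from $\pi(a'')$). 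So the $\pi(a'')$-part of $\gcd(ka', a)$ is $1$, and the $\pi(k)$-part: for $s \in \pi(k)$, exponent in $ka'$ is (exp in $k$) + (exp in $a'$) = (exp in $k$) + (exp in $a$), while exponent in $a = a'a''$ at $s$ is just (exp in $a'$) = (exp in $a$); so the min is (exp in $a$), i.e. the $\pi(k)$-part of $\gcd(ka',a)$ is $a'$. Hence $\gcd(ka', a) = a'$ and $ka'/\gcd(ka',a) = k$, so $R_{ka'}(p) \subseteq R_k(q)$ as well.

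Then I would argue $ka \ne ka'$, which holds because $a \ne a'$: by hypothesis $\pi(a) \nsubseteq \pi(k)$, so $a'' = a/a' > 1$, forcing $a' < a$. Since $R_{ka}(p)$ and $R_{ka'}(p)$ are both assumed nonempty and are disjoint (primitive prime divisors for distinct orders are disjoint sets, as $e(r,p)$ is a well-defined function of $r$), we get $|R_k(q)| \ge |R_{ka}(p)| + |R_{ka'}(p)| \ge 1 + 1 = 2 > 1$, as claimed.

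The main obstacle — really the only delicate point — is the number-theoretic bookkeeping showing $\gcd(ka', a) = a' = (a)_{\pi(k)}$, together with correctly pinning down the characterization $R_k(q) = \bigcup_{e/\gcd(e,a)=k} R_e(p)$ and the reduction $k = e/\gcd(e,a)$; once that prime-by-prime exponent comparison is done the rest is immediate. One should also double-check the edge behaviour at $p=2$ in the definition of $e(r,n)$, but since $a \ge 2$ gives $q = p^a$ with $q \equiv 1 \pmod 4$ when $p$ is odd and $q$ even otherwise, and since here all the orders $e(r,p)$ are taken with respect to the prime $p$ (odd or $2$) uniformly via the convention in the text, no separate case is needed.
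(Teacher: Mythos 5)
Your proof is correct and takes essentially the same route as the paper's: both exhibit $R_{ka}(p)$ and $R_{ka'}(p)$ as disjoint nonempty subsets of $R_k(q)$, the key computation being $\gcd(ka',a)=a'$, which you verify prime-by-prime while the paper does it via an explicit factorization of $a$ and $k$. One small slip in your closing aside: $p$ odd and $a\geqslant 2$ do not force $q\equiv 1\pmod 4$ (take $p\equiv 3\pmod 4$ and $a$ odd), but the $e(2,\cdot)$ convention still causes no harm, since $2\in R_{ka'}(p)$ would force $ka'=2$ with $a$ odd, whence $e(2,q)=e(2,p)=2=k$ and the inclusion into $R_k(q)$ survives.
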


\begin{proof}
It is clear that $R_{ka}(p)\subseteq R_k(p^a)$. It suffices to show that $R_{ka}(p)\cap R_{ka'}(p)=\emptyset$ and $R_{ka'}(p)\subseteq R_k(p^a)$.
Let $\pi(a)\cap\pi(k)=\{r_1,\ldots,r_s\}$ (this set can be empty). Then $$a=r\cdot \prod\limits_{i=1}^{s} r^{m_i}_i \ \ \ \mbox{and} \ \ \ k=t\cdot \prod\limits_{i=1}^{s} r^{n_i}_i,$$ where $(r, t)=(r, r_i)=(t, r_i)=1$ for all $i=1, \ldots, s$.
Since $\pi(a)\nsubseteq\pi(k)$, we have that $r>1$, and so $a\neq a'$. Now $R_{ka}(p)\cap R_{ka'}(p)=\emptyset$ by definition.
Let $x$ be a primitive prime divisor of $p^{ka'}-1$,
where $$ka'=t\cdot\prod\limits_{i=1}^{s} r^{m_i+n_i}_i.$$ Then $x$ is a primitive prime divisor of $(p^a)^k-1=(p^a)^{t\cdot r^{n_1}_1\cdots r^{n_s}_s}-1$, since $(t, a)=1$. Therefore, $R_{ka'}(p)\subseteq R_k(p^a)$, and so $|R_k(q)|>1$.
\end{proof}

\begin{center}
 {\bf Table 3.} {\em The Orders of Finite Simple Groups of Lie
Type}\\[-0.3cm]
\[\begin{array}{|l|l|l|l|} \hline
\mbox{Group}& \mbox{Conditions} & \mbox{Other \ names} & \mbox{Order}\\
\hline & & & \\[-0.3cm] A_n(q) & n\geqslant 1 & {\rm {\rm PSL}}_{n+1}(q)=L_{n+1}(q)
 & \frac{1}{(n+1,q-1)}q^{n+1\choose 2}\prod\limits_{i=2}^{n+1}\left(q^i-1\right)
\\
& & =L^+_{n+1}(q)=A_n^+(q) & \\[0.3cm] B_n(q) & n\geqslant 2 &
P\Omega_{2n+1}(q)=\Omega_{2n+1}(q) &
\frac{1}{(2,q-1)}q^{n^2}\prod\limits^n_{i=1}\left(q^{2i}-1\right)\\[0.4cm]
C_n(q) & n\geqslant 2 & PSp_{2n}(q) &
\frac{1}{(2,q-1)}q^{n^2}\prod\limits^n_{i=1}\left(q^{2i}-1\right)\\[0.4cm]
D_n(q) & n\geqslant 3 & P\Omega_{2n}^+(q)=D_{n}^+(q) &
\frac{1}{(4,q^n-1)}q^{n(n-1)}(q^n-1)\prod\limits^{n-1}_{i=1}\left(q^{2i}-1\right)\\[0.2cm]
G_2(q) & & & q^6(q^6-1)(q^2-1)\\[0.2cm]
F_4(q) & & & q^{24}(q^{12}-1)(q^8-1)(q^6-1)(q^2-1)\\[0.2cm]
E_6(q) & & E_6^+(q) & \frac{1}{(3,q-1)}
q^{12}(q^9-1)(q^5-1)\times |F_4(q)|\\[0.2cm]
E_7(q) & & & \frac{1}{(2,q-1)}q^{39}(q^{18}-1)(q^{14}-1)(q^{10}-1)\\[0.2cm]
& & & \times |F_4(q)| \\[0.2cm]
E_8(q) & & & q^{96}(q^{30}-1)(q^{12}+1)(q^{20}-1)
(q^{18}-1)\\[0.2cm]
& & & (q^{14}-1)(q^{6}+1)\times |F_4(q)|\\[0.2cm] ^2A_n(q) & n\geqslant 2 &
{\rm PSU}_{n+1}(q)=U_{n+1}(q) & \frac{1}{(n+1,q+1)}q^{n+1\choose 2}\prod\limits^{n+1}_{i=2}\left(q^i-(-1)^i\right)\\[0.2cm]
& & =L^-_{n+1}(q)=A_n^-(q) & \\[0.2cm]
^2B_2(q) & q=2^{2m+1} & {\rm Sz}(q)={\sp2B}_2(\sqrt{q})& q^{2}(q^2+1)(q-1)\\[0.2cm]
^2D_n(q) & n\geqslant 2 & P\Omega_{2n}^-(q)=D_{n}^-(q) &
\frac{1}{(4,q^n+1)}q^{n(n-1)}(q^n+1)\prod\limits_{i=1}^{n-1}\left(q^{2i}-1\right)\\[0.2cm]
^3D_4(q) & & & q^{12}(q^8+q^4+1)(q^6-1)(q^2-1)\\[0.2cm]
^2G_2(q) & q=3^{2m+1} & R(q)={\sp2G}_2(\sqrt{q})& q^{3}(q^3+1)(q-1)\\[0.2cm]
^2F_4(q) & q=2^{2m+1} & ^2F_4(\sqrt{q}) & q^{12}(q^6+1)(q^4-1)(q^3+1)(q-1)  \\[0.2cm]
^2E_6(q) & & E_6^-(q) &
\frac{1}{(3,q+1)}q^{12}(q^9+1)(q^5+1)\times |F_4(q)|\\[0.3cm]
\hline
\end{array}
\]
\end{center}

We define two functions $\nu$ and $\eta$ on $\mathbb{N}$ as
follows:
$$\nu(n)=\left\{ \begin{array}{ll} n & \mbox{if}  \ n\equiv 0\!\!\!\pmod{4},
\\[0.1cm]
\frac{n}{2} & \mbox{if} \ n\equiv 2\!\!\!\pmod{4},
\\[0.1cm]
2n& \mbox{if} \ n \equiv 1\!\!\!\pmod{2},\\ \end{array} \right.  \ \ \ \ \ {\rm and} \ \ \ \ \ \eta(n)=\left\{ \begin{array}{ll}
n & \mbox{if} \ n \ \mbox{is odd},
\\[0.1cm]
\frac{n}{2}& \mbox{if} \ n \ \mbox{is even.}\\ \end{array}
\right.
$$
Now, we put
$$
\nu_{\epsilon}(n)=\left\{ \begin{array}{ll}
n & \mbox{if} \ \epsilon=+,
\\[0.1cm]
\nu(n)& \mbox{if} \ \epsilon=-.\\ \end{array}
\right.
$$
Given a simple classical group $L$ over a field of order $q$ and a prime $r$ coprime to $q$, we put

$$\varphi (r, L)=\left\{ \begin{array}{ll}
e(r, \epsilon q) & \mbox{if} \ L=L_n^\epsilon (q),
\\[0.1cm]
\eta(e(r, q)) & \mbox{if} \ L \ \mbox{is symplectic or orthogonal,}\\ \end{array}
\right.
$$
and
$$\delta (L)=\left\{ \begin{array}{ll}
\pi(\epsilon q-1) & \mbox{if} \ L=L_n^\epsilon (q),
\\[0.1cm]
\pi((2, q-1)) & \mbox{if} \ L \ \mbox{is symplectic or orthogonal.}\\ \end{array}
\right.
$$
For a classical group $L$, we put ${\rm prk}(L)$ to denote its dimension if $L$ is a linear or unitary group, and its Lie rank if $L$ is a symplectic or orthogonal group.

The following two lemmas taken from \cite[Lemmas 2.2 and 2.4]{15Vas}, in fact, follow from the results of \cite{vasile-v, 11VasVd.t}.

\begin{lm}\label{vasi-algebra-1}
Let $L$ be a simple classical group over a field of order $q$ and characteristic $p$.
Suppose that $r_{i_0}\in R_i(q)$ and $r_{j_0}\in R_j(q)$ are distinct primes such that $r_{i_0}, r_{j_0}\notin \delta(L)$. Then, we have:
\begin{itemize}
\item[{\rm (i)}]  If $r_{i_0}\sim r_{j_0}$ in ${\rm GK}(L)$,  then for all distinct odd primes $r_i\in R_i(q)$ and  $r_j\in R_j(q)$, $r_i\sim r_j$ in ${\rm GK}(L)$.

\item[{\rm (ii)}] If $r_{i_0}\sim p$ in ${\rm GK}(L)$, then
for all odd primes $r_i\in R_i(q)$, $r_i\sim p$ in ${\rm GK}(L)$.
\end{itemize}
\end{lm}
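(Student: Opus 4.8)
The statement to prove is Lemma~\ref{vasi-algebra-1}, which asserts that adjacency in $\mathrm{GK}(L)$ between primitive prime divisors depends only on the indices $i,j$ (and the characteristic $p$), not on the particular choice of primitive prime divisors, once we stay outside $\delta(L)$.

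My plan is to reduce both parts to the arithmetic adjacency criterion of Vasil'ev and Vdovin from \cite{vasile-v,11VasVd.t}. The key point is that, for a classical group $L$ over $\mathrm{GF}(q)$, the criterion for whether two odd primes $r \in R_i(q)$ and $s \in R_j(q)$ are adjacent in $\mathrm{GK}(L)$ is expressed purely in terms of the invariants $\varphi(r,L) = i$ (or $\eta$ of it) and $\varphi(s,L) = j$, the rank or dimension $\mathrm{prk}(L)$, and whether $r,s$ lie in $\delta(L)$ — but does \emph{not} otherwise depend on the size of $r$ or $s$. So once we assume $r_{i_0}, r_{j_0} \notin \delta(L)$ and $r_i, r_j \notin \delta(L)$ (the latter holds automatically since $r_i \in R_i(q)$ and all members of $R_i(q)$ have the same $e(\cdot, q)$, hence the same behaviour with respect to $\delta(L) = \pi(\epsilon q - 1)$ or $\pi((2,q-1))$), the adjacency of the pair $(r_i, r_j)$ is governed by the same conditions as that of $(r_{i_0}, r_{j_0})$.

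For part (i), I would proceed case by case on the type of $L$ (linear/unitary versus symplectic/orthogonal), invoking the relevant adjacency criterion. In each case one checks: (a) the membership condition $r \notin \delta(L)$ is determined by $i = e(r,\epsilon q)$ alone, so $r_{i_0} \notin \delta(L)$ forces every $r_i \in R_i(q)$ to satisfy $r_i \notin \delta(L)$; (b) with that membership fixed, the criterion reduces to an inequality or divisibility condition involving only $i$, $j$, $\mathrm{prk}(L)$, and occasionally a parity/sign condition, none of which sees the individual prime. Hence $r_{i_0} \sim r_{j_0}$ iff $i,j$ satisfy the combinatorial condition iff $r_i \sim r_j$ for all such pairs. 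Part (ii) is analogous but simpler: the adjacency of a primitive prime divisor $r_i \in R_i(q)$ with the characteristic $p$ is controlled by whether $i \leqslant$ some bound depending on $\mathrm{prk}(L)$ (essentially whether a suitable unipotent element centralizes an element of order $r_i$), again independent of which $r_i$ we pick, so $r_{i_0} \sim p$ propagates to all $r_i \in R_i(q)$.

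The main obstacle is bookkeeping rather than conceptual depth: one must correctly quote the adjacency criterion from \cite{vasile-v,11VasVd.t} for each family of classical groups and verify in each that the ``$\delta(L)$-free'' part of the criterion is a function of the indices only. A subtle point to be careful about is the prime $2$: the definition of $e(2,n)$ is special, and $2 \in \delta(L)$ precisely when $q$ is odd for symplectic/orthogonal groups, so the hypotheses $r_{i_0}, r_{j_0} \notin \delta(L)$ already exclude the delicate characteristic-$2$-versus-prime-$2$ interactions; this is exactly why the lemma restricts to \emph{odd} primes $r_i, r_j$ in the conclusion and why only primes outside $\delta(L)$ are considered. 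Since the statement is attributed to \cite[Lemmas 2.2 and 2.4]{15Vas} and ultimately to \cite{vasile-v, 11VasVd.t}, the cleanest route is to cite those results directly and indicate the short verification above; I would not reprove the full adjacency criterion here.
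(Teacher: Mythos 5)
Your proposal matches the paper's treatment: the paper gives no independent proof of this lemma, simply importing it from \cite[Lemmas 2.2 and 2.4]{15Vas} with the remark that it follows from the adjacency criteria of \cite{vasile-v,11VasVd.t}, which is precisely the reduction you describe. Your added observation that the criterion depends only on the indices $i,j$, on ${\rm prk}(L)$, and on membership in $\delta(L)$ (itself determined by $e(r,q)$ for odd $r$) is the correct reason the cited results yield the statement, so the approach is essentially identical.
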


\begin{lm}\label{vasi-algebra}
Let $L$ be a simple classical group over a field of order $q$ and characteristic $p$, and let ${\rm prk}(L) = n\geqslant   4$.
\begin{itemize}
\item[{\rm (i)}]  If $r\in \pi(L)\setminus \{p\}$,  then  $\varphi (r, L)\leqslant n$.
\item[{\rm (ii)}] If $r$ and $s$ are distinct primes in $\pi(L)\setminus \{p\}$ with $\varphi (r, L)\leqslant   n/2$ and $\varphi (s, L)\leqslant   n/2$,
then $r$ and $s$ are adjacent in ${\rm GK}(L)$.
\item[{\rm (iii)}] If $r$ and $s$ are distinct primes in $\pi(L)\setminus \{p\}$ with $n/2<\varphi (r, L)\leqslant   n$ and $n/2<\varphi (s, L)\leqslant   n$,
then $r$ and $s$ are adjacent in ${\rm GK}(L)$ if and only if $e(r, q)=e(s, q)$.
\item[{\rm (iv)}] If $r$ and $s$ are distinct primes in $\pi(L)\setminus \{p\}$ and $e(r, q)=e(s, q)$, then
$r$ and $s$ are adjacent in ${\rm GK}(L)$.
\end{itemize}
\end{lm}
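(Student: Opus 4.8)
The plan is to derive all four items from the arithmetic adjacency criterion for prime graphs of finite simple classical groups established by Vasil'ev and Vdovin in \cite{vasile-v, 11VasVd.t}, after rephrasing that criterion in terms of $\varphi$. The dictionary one needs is: for a classical group $L$ of characteristic $p$ and a prime $r\in\pi(L)\setminus\{p\}$, the number $\varphi(r,L)$ is essentially the dimension of the smallest nondegenerate subspace of the natural module on which a nontrivial $r$-element acts — for linear and unitary groups this is $e(r,\epsilon q)$ directly, while for symplectic and orthogonal groups a primitive prime divisor of $q^{e}-1$ with $e$ even divides $q^{e/2}+1$ and so already occurs in the order of a classical group of rank $e/2$, which forces the passage to $\eta(e(r,q))$. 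Granting this, the Vasil'ev--Vdovin criterion says that two primes $r,s\in\pi(L)\setminus\{p\}$, outside the small exceptional set measured by $\delta(L)$ (a $\{2\}$-set or a $\pi(\epsilon q-1)$-set), are adjacent in ${\rm GK}(L)$ precisely when some maximal torus of $L$ has order divisible by $rs$; and whether such a torus exists is governed by fitting two corresponding classical blocks into rank $n$, that is, by an inequality between $\varphi(r,L)$, $\varphi(s,L)$ and $n$ together with the coincidence or parity relation of $e(r,q)$ and $e(s,q)$.

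For (i) I would argue straight from Table 3: any $r\in\pi(L)\setminus\{p\}$ divides a factor $q^{m}\pm1$ of $|L|$, so $e(r,q)$ divides $m$ or $2m$, and scanning the exponents $m$ that occur in the order of each classical family of rank $n$ gives $\varphi(r,L)\leqslant n$ (for linear and unitary groups one has the even cleaner fact that $r\mid q^{i}-(\pm1)^{i}$ for some $i$ with $2\leqslant i\leqslant n$, whence $e(r,\epsilon q)\leqslant n$ at once). Items (ii)--(iv) are then the three regimes of the criterion. If $\varphi(r,L),\varphi(s,L)\leqslant n/2$, then $\varphi(r,L)+\varphi(s,L)\leqslant n$, so a subsystem subgroup of $L$ with one block of type $e(r,q)$ and one of type $e(s,q)$ embeds in $L$, and a torus of it has order divisible by both $r$ and $s$; hence $r\sim s$, which is (ii). If both values lie in $(n/2,n]$, their sum exceeds $n$, so two distinct blocks no longer fit, and the only remaining way for $rs$ to divide a torus order is that $r$ and $s$ divide the same cyclotomic value $q^{e(r,q)}-1$, i.e. $e(r,q)=e(s,q)$; the converse — a single cyclic torus then works, its rank being $\eta(e(r,q))\leqslant n$ by (i) — completes (iii), and (iv) is just the "$e(r,q)=e(s,q)$" direction of this, valid with no size restriction.

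The real work hides behind the phrase "fitting blocks into rank $n$": the Vasil'ev--Vdovin criterion genuinely has different shapes for $L_n^+$, $L_n^-$, $B_n$, $C_n$, $D_n^+$ and $D_n^-$, and the translation $e(r,q)\mapsto\varphi(r,L)$ conceals $2$-adic subtleties — most notably a pair $r,s$ with $e(r,q)$ odd and $e(s,q)=2e(r,q)$ has $\varphi(r,L)=\varphi(s,L)$ yet is \emph{not} adjacent, so one must check family by family that such pairs do not contradict the clean trichotomy, and that the primes of $\delta(L)$ (those dividing $(2,q-1)$ or $\epsilon q-1$, whose $e$-value is $1$ or $2$) are correctly accounted for. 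One also has to verify that the hypothesis $n\geqslant4$ is precisely what removes the low-rank accidents — the exceptional isogenies and coincidences among $B_2$, $C_2$, $D_2$, $D_3$ — so that (ii)--(iv) hold in the stated uniform form. Since the statement is quoted verbatim from \cite{15Vas} and all of these verifications are already carried out in \cite{vasile-v, 11VasVd.t}, the actual write-up reduces to recording the dictionary above and citing the relevant lemmas and tables there.
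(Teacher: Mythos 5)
The paper offers no proof of this lemma at all: it is quoted verbatim from \cite[Lemmas 2.2 and 2.4]{15Vas} with the remark that it follows from \cite{vasile-v,11VasVd.t}, and your proposal ultimately does the same thing, deferring the family-by-family verification to those references. Your accompanying sketch of how the Vasil'ev--Vdovin torus criterion yields the trichotomy (and your flagging of the $\eta$/2-adic subtleties and the role of $\delta(L)$ and $n\geqslant 4$) is accurate, so this is essentially the same approach with a correct outline added on top.
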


Another lemma gives necessary information on adjacency with characteristic in the prime graph of a classical group of Lie type.
\begin{lm}\label{lm_char}
Let $L$ be a simple classical group over a field of characteristic $p$, and let ${\rm prk}(L) = n\geqslant 4$.
Then $\varphi(r,L)>n/2$ for every prime $r$ nonadjacent to $p$ in ${\rm GK}(L)$.
\end{lm}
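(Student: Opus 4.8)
The plan is to deduce the claim from the known description of the adjacency of a prime $r\ne p$ with the characteristic $p$ in the prime graph of a classical group, followed by a short case check over the four families (linear, unitary, symplectic, orthogonal). Recall that for $L=L_n^\epsilon(q)$ one has $\varphi(r,L)=e(r,\epsilon q)$, and for $L$ symplectic or orthogonal of Lie rank $n$ one has $\varphi(r,L)=\eta(e(r,q))$; in all cases $\varphi(r,L)$ equals the length of the Galois orbit (or half of it, in the $\eta$-case) of the eigenvalues of a semisimple element of order $r$ on the natural module of $L$. The decisive fact underlying the adjacency criterion of Vasil'ev and Vdovin (see \cite{vasile-v} and its reformulation \cite{11VasVd.t}) is that $r\sim p$ in ${\rm GK}(L)$ as soon as some semisimple element $s$ of order $r$ admits a non-trivial commuting unipotent element; and this happens whenever the eigenvalue blocks of $s$ can be packed into at most half of the natural module, since then the centralizer of $s$ acquires a general linear or unitary factor of rank at least $2$ (two equal blocks), or, equally well, a classical direct factor acting on the non-degenerate complement of dimension at least $n$, each of which contains a unipotent element of order $p$.

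Concretely, I would argue as follows. Let $r$ be non-adjacent to $p$ in ${\rm GK}(L)$; we may take $r$ odd, since for $q$ even we have $p=2$ (so $2$ is not a vertex distinct from $p$), and for $q$ odd a transvection commutes with an involution of a complementary classical subgroup, whence $2\sim p$. Put $\varphi=\varphi(r,L)$ and suppose, towards a contradiction, that $\varphi\leqslant n/2$. If $L=L_n^\epsilon(q)$, then a parabolic subgroup with Levi factor $GL_\varphi^\epsilon(q)\times GL_{n-\varphi}^\epsilon(q)$ contains an element of order $r$ supported on the first factor; as $n-\varphi\geqslant n/2\geqslant 2$ there is room in the second factor (and in the unipotent radical) to correct the determinant and to produce a commuting unipotent element of order $p$, so $r\sim p$. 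If $L$ is symplectic or orthogonal of rank $n$, a semisimple element $s$ of order $r$ whose eigenvalue orbits span a non-degenerate subspace of dimension $2\varphi\leqslant n$ has centralizer containing a general linear or unitary factor of rank $\geqslant 2$ (take two identical blocks, of total dimension $2\varphi\leqslant n$), and this factor contains a unipotent element of order $p$ commuting with $s$, so again $r\sim p$. In either case we contradict the choice of $r$; hence $\varphi(r,L)>n/2$.

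For the write-up it is probably more efficient to quote the explicit adjacency criteria with the characteristic directly from \cite{vasile-v,11VasVd.t}: for $L=L_n^\epsilon(q)$ the primes $r$ and $p$ are non-adjacent precisely when $e(r,\epsilon q)\in\{n-1,n\}$, and in the symplectic and orthogonal families $r$ and $p$ are non-adjacent only when $\eta(e(r,q))$ is as large as the rank $n$ essentially allows; in each of these cases the resulting lower bound on $\varphi(r,L)$ is at least $n-1$, and $n-1>n/2$ because $n\geqslant 4$. (The isomorphism $B_n(q)\cong C_n(q)$ for even $q$ lets us disregard odd-dimensional orthogonal groups in characteristic $2$.)

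The main obstacle is bookkeeping in the boundary cases rather than any conceptual point. When $\varphi$ is close to $n/2$ one must verify that the eigenvalue blocks of a semisimple $r$-element can genuinely be arranged so that the leftover summand is not a torus: the degenerate orthogonal spaces $\Omega_1(q)$ and $\Omega_2^{\pm}(q)$ support no unipotent element, and in the linear and unitary cases a coordinate can be consumed by a determinant correction inside $SL$ or $SU$; consequently a few low-rank configurations (in practice $n\in\{4,5\}$ and their unitary analogues), as well as the primes $r$ dividing the diagonal factors $(n+1,q-\epsilon)$, $(2,q-1)$ or $(4,q^n\mp 1)$, have to be settled by a direct look at $\mu(L)$ instead of the generic counting argument.
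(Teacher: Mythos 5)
Your proposal is correct, and its ``more efficient'' second option is exactly what the paper does: the paper's entire proof of this lemma is a citation to \cite[Proposition~6.3, Table~4]{vasile-v} (see also \cite[Lemma~2.6]{15Vas}), i.e.\ to the explicit list of conditions under which $r$ is nonadjacent to $p$, from which $\varphi(r,L)\geqslant n-1>n/2$ is read off at once. Your first-principles centralizer argument is a sound reconstruction of what lies behind that citation, and it buys a self-contained proof at the cost of the block-packing bookkeeping you describe. One remark that simplifies your own argument: for the lemma one only needs the contrapositive direction (``$\varphi(r,L)\leqslant n/2$ implies $r\sim p$''), and in that regime the complementary nondegenerate subspace always has dimension at least $n\geqslant 4$ in the symplectic/orthogonal cases, so the degenerate configurations $\Omega_1(q)$, $\Omega_2^{\pm}(q)$ you worry about in the last paragraph never occur; likewise $n-\varphi\geqslant 2$ in the linear/unitary cases leaves room for both the determinant correction and a unipotent element. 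So the boundary analysis you flag as the main obstacle is needed only for the sharp nonadjacency criterion itself, not for the inequality claimed here.
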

\begin{proof}
It follows from \cite[Proposition 6.3, Table 4]{vasile-v} (see also \cite[Lemma 2.6]{15Vas}).
\end{proof}

The properties of the solvable graphs of finite simple groups are studied in \cite{13AmKaz}. We will need the following assertions about finite simple linear groups, which are taken from~\cite[Lemmas 2.6 and 3.1]{13AmKaz}.

\begin{lm}\label{AK26}
Let $L=A_{n-1}(q)={\rm PSL}_n(q)$, and let $r=r_m(q)$ for $(m,q)\neq (6,2)$, $m>\lfloor\frac{n}{2}\rfloor$. Then the following hold:
\begin{itemize}
\item[{\rm (1)}] The Sylow $r$-subgroups of $L$ are cyclic;
\item[{\rm (2)}] If $A$ is a subgroup of order $r$ in $L$, then the index $|N_L(A):C_L(A)|$ divides $m$.
\end{itemize}
\end{lm}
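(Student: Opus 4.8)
The statement is Lemma~\ref{AK26}, cited above from \cite{13AmKaz}; the following is the self-contained argument one would give. The plan is to extract everything from the way a primitive prime divisor element acts on the natural module, and then transfer the conclusions from ${\rm GL}_n(q)$ down to $L={\rm PSL}_n(q)$. First I would record the arithmetic. Since $r=r_m(q)$ exists, $e(r,q)=m$; moreover $r$ is odd, $r\neq p$, and (assuming $n\geqslant 2$, the case $n=1$ being trivial) $m>\lfloor n/2\rfloor\geqslant 1$ forces $m\geqslant 2$, so $q\not\equiv 1\pmod r$ and hence $r\nmid q-1$. Also $r$ divides $|{\rm GL}_n(q)|=q^{{n\choose 2}}\prod_{i=1}^n(q^i-1)$ only through the factors $q^i-1$ with $m\mid i$, the smallest such $i$ in $\{1,\dots,n\}$ being $m$; thus $\lfloor n/2\rfloor<m\leqslant n$, and in particular $2m>n$.

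For part~(1): since $2m>n$, among $q-1,\dots,q^n-1$ the prime $r$ divides only $q^m-1$, so $(|{\rm GL}_n(q)|)_r=(q^m-1)_r$. The Singer cycle of ${\rm GL}_m(q)$, embedded block-diagonally in ${\rm GL}_n(q)$, is a cyclic subgroup of order $q^m-1$, so it contains a full Sylow $r$-subgroup of ${\rm GL}_n(q)$; being a subgroup of a cyclic group, that Sylow subgroup is cyclic. As $r\nmid q-1=|{\rm GL}_n(q):{\rm SL}_n(q)|$, a Sylow $r$-subgroup of ${\rm SL}_n(q)$ is a Sylow $r$-subgroup of ${\rm GL}_n(q)$, hence cyclic; and as $r\nmid(n,q-1)=|Z({\rm SL}_n(q))|$, it maps isomorphically onto a Sylow $r$-subgroup of $L$. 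This proves~(1).

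For part~(2) I would first push the index computation into ${\rm GL}_n(q)$. Let $A\leqslant L$ have order $r$ and let $\widehat A\leqslant {\rm SL}_n(q)$ be its full preimage; since $r\nmid|Z|$ with $Z=Z({\rm SL}_n(q))$, we have $\widehat A=A'\times Z$ where $A'$ is the unique, hence characteristic, subgroup of order $r$. A short diagram chase then gives $N_L(A)=\overline{N_{{\rm SL}_n(q)}(A')}$ and $C_L(A)=\overline{C_{{\rm SL}_n(q)}(A')}$, the key point being that an element inducing an inner automorphism of $A'$ modulo $Z$ already centralizes $A'$ because $A'\cap Z=1$; consequently $N_L(A)/C_L(A)\cong N_{{\rm SL}_n(q)}(A')/C_{{\rm SL}_n(q)}(A')$, and this embeds into $N_{{\rm GL}_n(q)}(A')/C_{{\rm GL}_n(q)}(A')$ since an element of ${\rm SL}_n(q)$ centralizing $A'$ in ${\rm GL}_n(q)$ centralizes it in ${\rm SL}_n(q)$. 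So it suffices to bound the ${\rm GL}$-index. A generator $a$ of $A'$ is a semisimple element of order $r$ on $V=\mathbb{F}_q^n$; as $\Phi_r(x)$ factors over $\mathbb{F}_q$ into irreducibles of degree $m=e(r,q)$ and $2m>n$, there is exactly one nontrivial rational canonical block, so $V=V_1\oplus V_0$ with $V_0=C_V(a)$ and $V_1={\rm Im}(a-1)$ an $m$-dimensional $A'$-irreducible subspace, and this decomposition is canonical. Hence $N_{{\rm GL}(V)}(A')={\rm GL}(V_0)\times N_{{\rm GL}(V_1)}(A')$ and $C_{{\rm GL}(V)}(A')={\rm GL}(V_0)\times C_{{\rm GL}(V_1)}(a)$, so the index equals $|N_{{\rm GL}(V_1)}(A'):C_{{\rm GL}(V_1)}(a)|$.

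Finally, on $V_1$ the algebra $\mathbb{F}_q[a]$ is the field $\mathbb{F}_{q^m}$ (the minimal polynomial of $a$ has degree $m$), so $V_1$ is a $1$-dimensional $\mathbb{F}_{q^m}$-space, $C_{{\rm GL}(V_1)}(a)=\mathbb{F}_{q^m}^{*}$ is a Singer cycle, and $N_{{\rm GL}(V_1)}(A')/C_{{\rm GL}(V_1)}(a)$ embeds into ${\rm Aut}(A')\cong(\mathbb{Z}/r)^{*}$ with image inside $\langle q\bmod r\rangle$: indeed $a$ and $a^k$ are ${\rm GL}_m(q)$-conjugate only when $k\equiv q^j\pmod r$ for some $j$, while the $\mathbb{F}_q$-linear Frobenius $x\mapsto x^q$ of $\mathbb{F}_{q^m}$ realizes conjugation by $q$. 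Since $|\langle q\bmod r\rangle|=m$, this index divides $m$, and chaining the embeddings yields $|N_L(A):C_L(A)|\mid m$. The main obstacle is not really mathematical but organizational: the careful passage through the central quotient $Z$ in part~(2), where one must verify that normalizers and centralizers genuinely correspond under ${\rm SL}_n(q)\to L$ --- this is exactly where $r$ odd and $r\nmid(n,q-1)$ are used --- while the geometric crux is the uniqueness of the $m$-dimensional block, which is precisely the inequality $2m>n$ furnished by $m>\lfloor n/2\rfloor$.
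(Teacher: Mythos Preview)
Your argument is correct and essentially the standard one. Note, however, that the paper itself does \emph{not} supply a proof of this lemma: it is quoted verbatim from \cite[Lemma~2.6]{13AmKaz} and used as a black box. So there is no ``paper's own proof'' to compare against; what you have written is precisely the kind of self-contained justification one would give if asked to unpack the citation.

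Two minor remarks on exposition. First, you implicitly assume $m\leqslant n$ when you write ``the smallest such $i$ in $\{1,\dots,n\}$ being $m$''; if $m>n$ then $r\nmid|L|$, the Sylow $r$-subgroup is trivial, and both assertions are vacuous --- worth a half-sentence. Second, in part~(2) the passage from $L$ back to ${\rm SL}_n(q)$ is the only delicate step, and you handle it correctly: the crucial observation is that if $gZ$ centralizes $A$ then $[g,a']\in Z$, while $g$ normalizing $A'$ forces $[g,a']\in A'$, whence $[g,a']\in A'\cap Z=1$. Everything downstream (the canonical splitting $V=V_0\oplus V_1$ from $2m>n$, the identification $C_{{\rm GL}(V_1)}(a)=\mathbb{F}_{q^m}^{*}$, and the Galois/Frobenius description of the conjugacy classes of $a^k$) is standard Singer-cycle bookkeeping and is fine as written.
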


\begin{lm}\label{AK31}
Let $H$ be a solvable subgroup of the group $L=A_{n-1}(q)={\rm PSL}_n(q)$, whose order is coprime to $q$. Let $r=r_m(q)$ and $s=r_l(q)$, where $m>l>\frac{n}{2}$. If the order of $H$ is divisible by $rs$, then $s$ divides $m$.
\end{lm}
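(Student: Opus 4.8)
The plan is to reduce the statement to an analysis of how a Hall $\{r,s\}$-subgroup of $H$ sits inside ${\rm PSL}_n(q)$, exploiting that both $r$ and $s$ are large primitive prime divisors. First I would pass to a Hall $\{r,s\}$-subgroup $K$ of the solvable group $H$, which exists by Hall's theorem; since $|K|$ divides $|H|$ and $rs\mid |H|$, we have $rs\mid|K|$, and $K$ is again solvable with order coprime to $q$. Because $m>l>n/2$ and $(m,q)\neq(6,2)$, Lemma \ref{AK26}(1) applies to $r=r_m(q)$: the Sylow $r$-subgroups of $L$ are cyclic, so the Sylow $r$-subgroup $R$ of $K$ is cyclic of order $r$ (the full $r$-part of $|L|$ is exactly $r$ for such $m$, as the torus carrying $r_m(q)$ contributes a single factor $\Phi_m(q)$ with multiplicity one in this range). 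Similarly the Sylow $s$-subgroup $S$ of $K$ is cyclic of order $s$. Thus $K$ is a solvable $\{r,s\}$-group with cyclic Sylow subgroups of prime order, so $K$ is metacyclic; in particular one of $R$, $S$ is normal in $K$.

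Next I would argue that $S$ cannot be the normal one. If $S\trianglelefteq K$, then $R$ normalizes $S$, so $R\leqslant N_L(S)$ and $RS$ is a subgroup of order $rs$; then $R$ acts on the cyclic group $S$ of order $s$, and this action is either trivial or faithful. It cannot be faithful, since a faithful action of a group of order $r$ on a cyclic group of order $s$ would force $r\mid s-1$, which is impossible because $s=r_l(q)\equiv 1\pmod l$ forces $s> l$ and, more to the point, $r_l(q)$ being a primitive prime divisor gives $s\equiv 1\pmod{l}$ while the standard bound on primitive prime divisors together with $r>s$ (since $m>l$ pushes $r$ past $s$ is not automatic — here I would instead use that $r\nmid s-1$ because $s-1$ has all its prime divisors either equal to $p$, or primitive for some $p^j-1$ with $j<$ something bounded by $l$, hence $\varphi(\cdot)$-value smaller than $m$, so $r$, being $r_m(q)$ with $m>l\geqslant$ those values, cannot divide $s-1$). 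Hence the action is trivial, $RS$ is cyclic of order $rs$, and then $r\sim s$ in ${\rm GK}(L)$; but by Lemma \ref{vasi-algebra} (or the adjacency criterion of \cite{vasile-v}) two primitive prime divisors $r_m(q)$, $r_l(q)$ with $m,l>n/2$ and $m\neq l$ are \emph{non-adjacent} in ${\rm GK}(L)$ — a contradiction. Therefore $R\trianglelefteq K$.

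Now $S$ normalizes $R$, so $S\leqslant N_L(R)=N_L(A)$ where $A=R$ has order $r$. By Lemma \ref{AK26}(2), $|N_L(A):C_L(A)|$ divides $m$. Since $S$ has order $s$ and $s\nmid |C_L(A)|$ (otherwise $r$ and $s$ would both divide the order of the abelian, hence solvable, in fact cyclic group $C_L(A)$ — more carefully, $C_L(A)$ contains a cyclic maximal torus, and if $s\mid|C_L(A)|$ then $rs\in\omega(L)$, again contradicting non-adjacency of $r_m(q)$ and $r_l(q)$), the image of $S$ in $N_L(A)/C_L(A)$ is nontrivial of order $s$, so $s$ divides $|N_L(A):C_L(A)|$, which divides $m$. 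Hence $s\mid m$, as claimed. The main obstacle I anticipate is making the two ``$r\nmid s-1$'' / ``$s\nmid|C_L(A)|$'' steps fully rigorous: both ultimately rest on the non-adjacency of distinct large primitive prime divisors in ${\rm GK}(L)$ (Lemma \ref{vasi-algebra}(iii) with $e(r,q)=m\neq l=e(s,q)$) together with the precise shape of the maximal tori of ${\rm PSL}_n(q)$, so the careful bookkeeping of which tori contain elements of order $r$, $s$, or $rs$ is where the real work lies; everything else is routine solvable-group structure theory.
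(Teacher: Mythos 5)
First, note that the paper does not prove this lemma at all: it is quoted verbatim from Amberg and Kazarin \cite[Lemma~3.1]{13AmKaz}, so there is no in-paper proof to compare yours with. Judged on its own terms, your skeleton is sensible — pass to a Hall $\{r,s\}$-subgroup $K$, observe that its Sylow subgroups are cyclic, deduce that one of them is normal in $K$, and in the case $R\trianglelefteq K$ feed the order-$r$ subgroup $A$ of $R$ into Lemma~\ref{AK26}(2) to get $s\mid |N_L(A):C_L(A)|\mid m$. That last step is correct (modulo the small slip that the Sylow $r$-subgroup of $L$ is cyclic but need not have order exactly $r$; one applies Lemma~\ref{AK26}(2) to the characteristic subgroup of order $r$, which is still normal in $K$).

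The genuine gap is your elimination of the case $S\trianglelefteq K$. You reduce it to showing $r\nmid s-1$, and the justification you offer — that every prime divisor of $s-1$ is either $p$ or a primitive prime divisor of $p^j-1$ for some $j$ bounded in terms of $l$ — is simply not true: $s-1$ is an essentially arbitrary integer divisible by $l$, and nothing prevents it from having a prime factor $t$ with $e(t,q)=m$. Indeed $r=r_m(q)$ and $s=r_l(q)$ satisfy only $r\equiv 1\pmod m$ and $s\equiv1\pmod l$, which gives no control on whether $r\mid s-1$; so the faithful-action subcase is not excluded and your argument breaks exactly there (and in that subcase there is no element of order $rs$, so non-adjacency gives no contradiction either). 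The clean repair stays entirely inside your own toolkit and is symmetric to your final step: if $S\trianglelefteq K$, let $S_0\leqslant S$ have order $s$; then $R\leqslant N_L(S_0)$, Lemma~\ref{AK26}(2) (applied with $l>\lfloor n/2\rfloor$) gives $|N_L(S_0):C_L(S_0)|\mid l$, and $r\nmid|C_L(S_0)|$ by non-adjacency of $r$ and $s$ in ${\rm GK}(L)$, whence $r\mid l$ — impossible because $r\equiv 1\pmod m$ forces $r>m>l$. With that replacement (and a separate word about $n\leqslant 3$, where Lemma~\ref{vasi-algebra} does not apply), the proof goes through.
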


The next lemma follows from \cite[Theorem 21.6]{supr}.
\begin{lm}\label{tor}
Let $G={\rm GL}_n(p)$, where $p$ and $n>3$ are primes. Let $H$ be a solvable $\{r,n\}$-subgroup of $G$, where $n=r_{n-1}(p)$ and $r\neq n$ does not divide $n\pm 1$ and $p^n-1$. Then $H$ is reducible.
\end{lm}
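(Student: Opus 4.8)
\noindent\emph{Proof idea.}\ The plan is to argue by contradiction: assume $H$ acts irreducibly on the natural module $V={\mathbb F}_p^{\,n}$, and treat separately the cases that $H$ is imprimitive and that $H$ is primitive as a linear group. First I would record the arithmetic the hypotheses give. Since $n=r_{n-1}(p)$ we have $e(n,p)=n-1$, which is $>1$ because $n>3$; hence $n\neq p$ and $n\nmid p-1$, and since $n-1\nmid n$ also $n\nmid p^{\,n}-1$. Moreover $r\mid p-1$ would force $r\mid p^{\,n}-1$, so $r\nmid p^{\,n}-1$ yields $r\nmid p-1$.

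If $H$ is imprimitive, fix a minimal system of imprimitivity; its blocks have a common dimension $n/k$ with $k>1$ and $k\mid n$, so $k=n$ and the blocks are lines. Thus, after a change of basis, $H$ lies in the monomial group ${\rm T}\rtimes S_n$, where ${\rm T}=({\mathbb F}_p^{\times})^{\,n}$ is the diagonal torus, and $H$ maps onto a transitive subgroup $\overline H\leqslant S_n$. The diagonal part $D=H\cap{\rm T}$ is a $\{r,n\}$-group of exponent dividing $p-1$; since $n\nmid p-1$ and $r\nmid p-1$ this forces $D=1$, so $H\cong\overline H$ is a solvable transitive subgroup of $S_n$ of prime degree. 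By the classical theorem of Galois, $\overline H$ embeds in ${\rm AGL}_1(n)$, so $|H|$ divides $n(n-1)$; with $\pi(H)\subseteq\{r,n\}$ and $r\nmid n-1$ this gives $|H|=n$. But a cyclic group of order $n$ has no irreducible ${\mathbb F}_p$-module of dimension $n$ (its irreducible ${\mathbb F}_p$-modules have dimensions $1$ and $e(n,p)=n-1$), so $H$ acts reducibly on $V$ --- a contradiction.

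If $H$ is primitive, I would invoke the structure of primitive solvable linear groups, which is in essence \cite[Theorem~21.6]{supr}: the Fitting subgroup $F=F(H)$ acts faithfully and homogeneously on $V$, satisfies $C_H(F)\leqslant F$, and is a central product of a cyclic group with extraspecial groups $E_1,\dots,E_k$ (with $|E_i|=t_i^{1+2a_i}$), so that $n=\dim_{{\mathbb F}_p}V=m\cdot s\cdot\prod_i t_i^{a_i}$, where $m$ is the number of homogeneous components of $V|_F$ and $s$ is the ${\mathbb F}_p$-degree of the endomorphism field of such a component. As $n$ is prime, exactly one of $m$, $s$, $\prod_i t_i^{a_i}$ equals $n$ and the others equal $1$. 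If $m=n$, then $F$ acts by ${\mathbb F}_p$-scalars, so $H=C_H(F)\leqslant F$ is scalar, hence reducible --- excluded. If $s=n$, then $V$ is a $1$-dimensional ${\mathbb F}_{p^{\,n}}$-space, $F\leqslant{\mathbb F}_{p^{\,n}}^{\times}$, and $H$ embeds in the semilinear group $\Gamma{\rm L}_1(p^{\,n})$ of order $n(p^{\,n}-1)$; then $\pi(H)\subseteq\{n\}\cup\pi(p^{\,n}-1)$, so $r\nmid p^{\,n}-1$ makes $H$ an $n$-group, whence $F$ is an $n$-subgroup of ${\mathbb F}_{p^{\,n}}^{\times}$; but $n\nmid p^{\,n}-1$ forces $F=1$, contradicting $F(H)\neq1$ for a nontrivial solvable group. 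Finally, if $\prod_i t_i^{a_i}=n$, then, since the $t_i$ lie in $\{r,n\}$ and $r^{a_i}=n$ is impossible for the prime $r\neq n$, there is a single extraspecial factor $E$ of order $n^{3}$ acting absolutely irreducibly on $V$; its centre $Z(E)\cong C_n$ then acts by ${\mathbb F}_p$-scalars, forcing $n\mid p-1$ --- a contradiction.

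The imprimitive case is essentially routine, relying only on Galois's description of solvable transitive groups of prime degree and on the diagonal torus having exponent $p-1$. I expect the main obstacle to be the primitive case: it requires the classification of irreducible (equivalently primitive) solvable linear groups and a careful check that, for prime dimension $n$, the structural invariants admit only the three possibilities above --- in particular that any extraspecial factor of $F(H)$ must be an $n$-group --- after which the incompatibility of $n\mid p-1$ (and of $n\mid p^{\,n}-1$) with $n=r_{n-1}(p)$, $n>3$, closes the argument; the conditions $r\neq n$ and $r\nmid n\pm1$ are what keep the relevant group orders coprime to $r$ throughout.
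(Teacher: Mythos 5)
Your argument is correct and takes essentially the same route as the paper, whose entire proof is the remark that the lemma ``follows from Theorem~21.6 of \cite{supr}'': that theorem is precisely the trichotomy for irreducible solvable linear groups of prime degree (monomial, semilinear over $\mathbb{F}_{p^n}$, or normalizer of an extraspecial $n$-group) that you reconstruct and then eliminate case by case using $n=r_{n-1}(p)$, $r\nmid n\pm 1$ and $r\nmid p^n-1$. The one small rough spot is the subcase $s=n$, where $F(H)\leqslant\mathbb{F}_{p^n}^{\times}$ needs the prior observation that $e=1$ forces $F(H)$ abelian (equivalently, one can finish more directly: $H$ is then an $n$-group of order dividing $n\cdot(p^n-1)_{\{n\}}=n$, hence cyclic of order $n$, and $C_n$ has no irreducible $\mathbb{F}_p$-module of dimension $n$ because $e(n,p)=n-1$).
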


%%%%%%%%%%%%%%%%%%%%%%%%%%%%%%%%%%%%%%%%%%%%%%%%%%%%%%%%%

\section{\sc Proof of Theorem C}
In this section we prove that the compact form ${\rm GK_c}(L)$ of the prime graph of a simple group of Lie type $L$ is split.
Technical tools for determining ${\rm GK_c}(L)$ are Lemmas \ref{vasi-algebra-1}--\ref{lm_char} in the case when $L$ is a classical group with ${\rm prk}(L)\geqslant 4$, and \cite{vasile-v,11VasVd.t} otherwise.

Let $L$ be a group of Lie type over a field of order $q$ and characteristic $p$. Assume first that $L$ is a classical group with $n={\rm prk}(L)\geqslant4$.
Define $$J(L)=\{e(r,q) \mid n/2<\varphi(L)\leq n \} . $$
Lemma \ref{vasi-algebra} yields that the set $I=\{R_j(q) \mid j\in J(L)\}$ is a coclique in~${\rm GK_c}(L)$.
On the other hand, the set $\{p\}\cup\{r \mid \varphi(r,L)\leq n/2\}$ is a clique in the prime graph ${\rm GK}(L)$ by Lemmas~\ref{vasi-algebra-1}--\ref{lm_char}. Thus, the graph ${\rm GK_c}(L)$ is split for a classical group $L$ with $n={\rm prk}(L)\geqslant4$.

For all other groups of Lie type it can be deduced as follows.

\noindent {\em Type  $A_l^\epsilon$}  \ \ First, we assume that  $L=A_1(q)$, $q=p^n$.
We already know that
$$\mu(L) = \left\{p, \  (q-1)/(2, q-1), \ (q+1)/(2, q-1)\right\}.$$
The diagram of the compact form of ${\rm GK}(L)$ shown in Figure 2.
Thus
 ${\rm GK_c}(L)$ is always split, with a special split partition $C\uplus I$, where
 $$\begin{array}{lll}
C=\left\{\{p\}\right\}  &   \mbox{and}  &  I=\left\{\pi((q-1)/(2, q-1)), \ \pi((q+1)/(2, q-1))\right\},\\[0.2cm]
C=\left\{\pi((q-1)/(2, q-1))\right\}  & \mbox{and} &    I=\left\{\{p\}, \ \pi((q+1)/(2, q-1))\right\},\\[0.2cm]
C=\left\{\pi((q+1)/(2, q-1))\right\}  & \mbox{and} &   I=\left\{\{p\}, \ \pi((q-1)/(2, q-1))\right\}.\\[0.2cm]
\end{array}$$
Next, we assume that  $L=A_2^\epsilon (q)$,  $q=p^n$, and put $U_1=R_{\nu_{\epsilon} (1)}\setminus \{2, 3\}$, $U_2=R_{\nu_{\epsilon} (2)}\setminus \{2, 3\}$.  The corresponding diagram of the compact form for ${\rm GK}(L)$ is shown in Figure 3  (see \cite{11VasVd.t}).
Here and further, if an edge occurs under some condition, we draw such an edge as a dashed line and write an occurrence condition.
\begin{itemize}
\item[{\rm (a)}] If $(q-\epsilon 1)_3>3$,  then
$\{\{2\}, \{3\}, \{p\}, U_1\}$ is a clique in ${\rm GK_c}(A_2^\epsilon (q))$.  Clearly,
 ${\rm GK_c}(L)$ is a split graph, with a special split partition $C\uplus I$, where
$C=\{\{2\}, \{3\}, \{p\}, U_1\}$ and $I=\{U_2, R_{\nu_{\epsilon} (3)}\}$.

\item[{\rm (b)}] If $(q-\epsilon 1)_3=3$,  then the set $\{\{2\}, \{p\}, U_1\}$ is a clique in the
compact form for ${\rm GK}(L)$,  while $3$, $U_2$, and $R_{\nu_{\epsilon} (3)}$  are pairwise nonadjacent. Hence,  ${\rm GK_c}(L)$ is a split graph, with a special split partition $C\uplus I$, where
$C=\{\{2\}, \{p\}, U_1\}$ and $I=\{\{3\}, U_2, R_{\nu_{\epsilon} (3)}\}$.

\item[{(c)}] If $(q-\epsilon 1)_3=1$, i.e., either $(q+\epsilon 1)_3>1$ and $3\in R_{\nu_{\epsilon} (2)}\neq \{2\}$, or $p=3$.  As before, we
see that the set $\{\{2\}, \{p\}, U_1\}$ is a clique in the compact form for ${\rm GK}(L)$, while $U_2$ and $R_{\nu_{\epsilon} (3)}$ are nonadjacent.  Again,  ${\rm GK_c}(L)$ is a split graph, with a special split partition $C\uplus I$, where
$C=\{\{2\}, \{p\}, U_1\}$ and $I=\{\{3\}, U_2, R_{\nu_{\epsilon} (3)}\}$.
\end{itemize}

\vspace{1.5cm} {  \setlength{\unitlength}{4.5mm}
\begin{picture}(0,0)(-3,0)
\linethickness{0.3pt} %
\put(0.5,0){\circle*{0.4}}%\pi_2
\put(0.5,2){\circle*{0.4}}%\pi_3
\put(0.5,-2){\circle*{0.4}}%q-1
\put(20,-2){\circle*{0.4}}%q+1
\put(17,0){\circle*{0.4}}%2
\put(23,0){\circle*{0.4}}%\pi_1
\put(17,2){\circle*{0.4}}%\pi_1
\put(23,2){\circle*{0.4}}%\pi_1
\put(23,-2){\circle*{0.4}}%q+1
\put(1.3,1.7){ $\{p\}$}%
\put(1.3,-0.3){\small $\pi\left(\frac{q-1}{(2, q-1)}\right)$}%
\put(1.5,-2.3){\small $\pi\left(\frac{q+1}{(2, q-1)}\right)$}%
\put(19,-3.2){ $\{2\}$}%
\put(15.5,-0.3){\small $U_1$}%
\put(15.5,2){\small $U_2$}%
\put(23,2.3){ $\{3\}$}%5-2
\put(23,-0.5){ $\{p\}$}%5-2
\put(20,-2){\line(3,2){3}}%5-3
\put(20,-2){\line(3,4){3}}%5-3
\put(20,-2){\line(-3,2){3}}%5-3
\put(20,-2){\line(-3,4){3}}%5-3
\put(17,0){\line(1,0){6}}%5-3
\put(17,0){\line(3,1){6}}%5-3
\put(17,0){\line(0,1){2}}%5-3
\put(17,2){\line(1,0){0.3}}%5-3
\put(17.5,2){\line(1,0){0.3}}%5-3
\put(18,2){\line(1,0){0.3}}%5-3
\put(18.5,2){\line(1,0){0.3}}%5-3
\put(19,2){\line(1,0){0.3}}%5-3
\put(19.5,2){\line(1,0){0.3}}%5-3
\put(20,2){\line(1,0){0.3}}%5-3
\put(20.5,2){\line(1,0){0.3}}%5-3
\put(21,2){\line(1,0){0.3}}%5-3
\put(21.5,2){\line(1,0){0.3}}%5-3
\put(22,2){\line(1,0){0.3}}%5-3
\put(22.5,2){\line(1,0){0.3}}%5-3
\put(23,0){\line(0,1){0.3}}%5-3
\put(23,0.5){\line(0,1){0.3}}%5-3
\put(23,1){\line(0,1){0.3}}%5-3
\put(23,1.5){\line(0,1){0.3}}%5-3
\put(16.7,2.75){\small $(q-\epsilon 1)_3\neq 3\neq p$}
\put(23.5,1){\small $(q-\epsilon 1)_3>3$}
\put(23.5,-2.3){\small $R_{\nu_{\epsilon} (3)}$}
\put(-1.5,-5){ {\bf Fig. 2}  ${\rm GK_c}(L_2(q))$}
\put(15.5,-5){ {\bf Fig. 3}  ${\rm GK_c}(L_3^\epsilon (q))$.}
\end{picture}}\vspace{3cm}

\noindent {\em Type  $B_l$}  \ \  Assume first that $L=B_2(q)\cong C_2(q)$, with the base field of characteristic $p$ and order $q$.  Here, $|L|=\frac{1}{(2, q-1)}q^4(q^2-1)(q^4-1)$ and we have (see \cite[Lemmas  11 and 12]{newmaz} and \cite[Lemma 7]{mazsuchao}):
$$\mu(L)=\left\{\begin{array}{lll}
\{ (q^2+1)/(2, p-1), (q^2-1)/(2, p-1), p(q+1), p(q-1), p^2\} & \mbox{if} &  p=2, 3.\\[0.5cm]
\{ (q^2+1)/2, (q^2-1)/2, p(q+1), p(q-1)\} & \mbox{if} &  p\neq 2, 3.\\
\end{array}\right.$$
Thus ${\rm GK_c}(L)$ is split,  with split partition $C\uplus I$, where
$C=\{\{p\}, R_1, R_2\}$ and $I=\{R_4\}$.

Assume next that $L\in \{B_3(q), C_3(q)\}$.  In this case, $|L|=\frac{1}{(2, q-1)}q^9(q^2-1)(q^4-1)(q^6-1)$.
If $q=2$, then the group $L$ is not simple, and so we may assume that $q\geqslant  3$. If $q=3$,
then $\mu (L)=\{8, 12, 13, 14, 18, 20\}$ (see \cite[Lemma 2.1]{Staroletov}). This shows that ${\rm GK_c}(L)$ is split,  with split partition $C\uplus I$, where
$C=\{\{2\}\}$ and $I=\{\{3\}, \{5\}, \{7\}, \{13\} \}$.  Suppose now that $q> 3$.
In this case,  the corresponding diagram of the compact form ${\rm GK_c}(L)$ is shown in Figure 4  (see \cite[Lemmas 2.1 and 2.3 ]{Staroletov}).

\vspace{0.5cm} {  \setlength{\unitlength}{4.5mm}
\begin{picture}(0,0)(-13,3)
\linethickness{0.3pt} %
\put(0,0){\circle*{0.4}}%\pi_2
\put(5,0){\circle*{0.4}}%\pi_3
\put(2.5,2){\circle*{0.4}}%q-1
\put(2.5,3.8){\circle*{0.4}}%q+1
\put(-2.5,2){\circle*{0.4}}%2
\put(7.5,2){\circle*{0.4}}%\pi_1
\put(4.25,-1){ $R_1$}%
\put(1.75,4.6){ $\{p\}$}%
\put(1.75,0.8){ $R_4$}%
\put(-3.25,2.5){ $R_6$}%
\put(-0.75,-1){ $R_2$}%
\put(6.75,2.5){ $R_3$}%
\put(0,0){\line(1,0){5}}%5-3
\put(0,0){\line(5,4){2.5}}%5-3
\put(0,0){\line(2,3){2.5}}%5-3
\put(5,0){\line(-5,4){2.5}}%5-3
\put(5,0){\line(-2,3){2.5}}%5-3
\put(2.5,2){\line(0,1){1.8}}%5-3
\put(5,0){\line(5,4){2.5}}%5-3
\put(0,0){\line(-5,4){2.5}}%5-3
\put(-4,-3){ {\bf Fig. 4}  ${\rm GK_c}(B_3 (q))={\rm GK_c}(C_3 (q))$.}
\end{picture}}\vspace{3cm}

Therefore, the compact form ${\rm GK_c}(L)$ is split, with special split partition $C\uplus I$, where
$C=\{\{p\}, R_1, R_2, R_4\}$ and $I=\{R_3, R_6\}$.

\noindent {\em Type  $G_2$}  \ \  Let $L=G_2(q)$, $q=p^m>2$.
The compact form ${\rm GK_c}(L)$ is shown in Figures 5-7 (see \cite{11VasVd.t}).
If $q\equiv 0\pmod{3}$, then ${\rm GK_c}(L)$ is split, with special split partition $C\uplus I$, where
$$C=\{R\},  \ I=\{R_3, R_6\};  \ \ \ \  C=\{R_3\}, \ I=\{R, R_6\};  \ \ \ \mbox{or}  \ \ \  C=\{R_6\}, \ I=\{R, R_3\};$$
while if $q\equiv 1\pmod{3}$ (resp.  $q\equiv -1\pmod{3}$), then ${\rm GK_c}(L)$ is split, with special split partition $C\uplus I$, where
$$\begin{array}{lll}
C=\{R_1, \{3\}\},  \ \  I=\{R_2\cup \{p\}, R_3, R_6\} & \mbox{if} &  q\equiv 1\pmod{3} \\[0.3cm]
C=\{R_2, \{3\}\}, \ \  I=\{R_1\cup \{p\}, R_3, R_6\} & \mbox{if} &  q\equiv -1\pmod{3}.\\
\end{array}$$

\vspace{1.3cm} {  \setlength{\unitlength}{4.5mm}
\begin{picture}(0,0)(-1,0)
\linethickness{0.3pt} %
\put(0,0){\circle*{0.4}}%\pi_2
\put(0,2){\circle*{0.4}}%\pi_3
\put(0,-2){\circle*{0.4}}%q-1
\put(10.5,0){\circle*{0.4}}%q+1
\put(12,2){\circle*{0.4}}%2
\put(13.5,0){\circle*{0.4}}%\pi_1
\put(16.5,0){\circle*{0.4}}%\pi_1
\put(15,2){\circle*{0.4}}%\pi_1
\put(25,0){\circle*{0.4}}%q+1
\put(22,0){\circle*{0.4}}%2
\put(23.5,2){\circle*{0.4}}%\pi_1
\put(28,0){\circle*{0.4}}%\pi_1
\put(26.5,2){\circle*{0.4}}%\pi_1
\put(0.3,1.7){ $R_1\cup R_2\cup \{3\}$}%
\put(0.3,-0.3){ $R_3$}%
\put(0.5,-2.3){$R_6$}%
\put(8.5,-1.5){ $R_2\cup \{p\}$}%
\put(10,1.75){ $R_1$}%
\put(16,-1.5){ $R_3$}%5-2
\put(15.2,1.75){ $\{3\}$}%5-2
\put(12.8, -1.5){ $R_6$}%5-2
\put(24.5,-1.5){ $R_3$}
\put(20,-1.5){ $R_1\cup \{p\}$}
\put(21.5,1.75){ $R_2$}
\put(27.5,-1.5){ $R_6$}
\put(26.8,1.75){ $\{3\}$}
\put(10.5,0){\line(3,4){1.5}}%5-3
\put(12,2){\line(1,0){3}}%5-3
\put(15,2){\line(3,-4){1.6}}%5-3
\put(28,0){\line(-3,4){1.5}}%5-3
\put(23.5,2){\line(1,0){3}}%5-3
\put(22,0){\line(3,4){1.6}}%5-3
\put(-2,-4){ {\bf Fig. 5}  ${\rm GK_c}(G_2(q))$, $3|q$}
\put(8,-4){ {\bf Fig. 6}  ${\rm GK_c}(G_2(q))$, $3|(q-1)$}
\put(20,-4){ {\bf Fig.  7}  ${\rm GK_c}(G_2(q))$, $3|(q+1)$}
\end{picture}}\vspace{2.3cm}

 {\em Type  $F_4$}  \ \  Let $L=F_4(q)$, $q=p^m$. The graph ${\rm GK}_c(L)$ is depicted in Figures 8  and  9 according to $q$ is even or odd,
respectively  (see \cite{11VasVd.t}). Here,   $R=R_1\cup R_2\cup \{2\}$ and $R'=R_1\cup R_2\cup \{p\}$
$(p\neq 2)$.  The compact form ${\rm GK_c}(L)$ indicates that $\{R, R_3\}$ is a clique if $q=2^m>2$, while
$\{\{2\}, R,  R_3\}$ is a clique if $q=p^m$, $p\neq 2$, and the remaining vertices are pairwise nonadjacent.
 Thus,
 ${\rm GK_c}(L)$ is a split graph with a special split partition $C\uplus I$, where
$C=\{R, R_3\}$ and $I=\{R_4, R_6, R_8, R_{12}\}$, if  $q=2^m>2,$
 and
$C=\{\{2\}, R, R_3\} $ and  $I=\{R_4, R_6, R_8, R_{12}\}$,  if  $q=p^m$, $p\neq 2$.

\vspace{1.2cm} {  \setlength{\unitlength}{4.5mm}
\begin{picture}(0,0)(-11,0)
\linethickness{0.3pt} %
\put(0,1.5){\circle*{0.4}}%R_12
\put(0,-1.5){\circle*{0.4}}%R_8
\put(-7,0){\circle*{0.4}}%R3
\put(-3.5,-1.5){\circle*{0.4}}%R_6
\put(-5,0){\circle*{0.4}}%R
\put(-3.5,1.5){\circle*{0.4}}%R_4
\put(-5.8,0.4){ $R$}%
\put(-3.3,-1.6){ $R_6$}%
\put(-8.5,0){$R_3$}%
\put(0.3,-1.6){ $R_{12}$}%
\put(0.3,1.35){ $R_{8}$}%
\put(-3.1,1.35){$R_4$}%
\put(-5,0){\line(1,1){1.5}}%5-2
\put(-7,0){\line(1,0){2}}%5-3
\put(-5,0){\line(1,-1){1.5}}%5-3
\put(-9.5,-4){ {\bf Fig. 8}  ${\rm GK_c}(F_4(q))$, $q=2^m>2$,}
\put(16.5,-1.5){\circle*{0.4}}%R12
\put(13.5,1.5){\circle*{0.4}}%R3
\put(13.5,-1.5){\circle*{0.4}}%R8
\put(11.3,1.5){\circle*{0.4}}%R
\put(16.5,1.5){\circle*{0.4}}%R4
\put(11.3,-1.5){\circle*{0.4}}%2
\put(9.1,0){\circle*{0.4}}%R8
\put(13.8,1.4){ $R_4$}%
\put(16.8,-1.7){ $R_{12}$}%
\put(11,1.9){$R'$}%
\put(13.8,-1.7){ $R_6$}%
\put(17,1.3){$R_8$}%
\put(10.7,-2.6){$\{2\}$}%
\put(7.6,-0.1){$R_3$}%
\put(11.3,1.5){\line(1,0){2.2}}%R6-R3
\put(11.3,1.5){\line(3,-4){2.27}}%R-R4
\put(11.3,-1.5){\line(1,0){2.27}}%2-R4
\put(11.3,-1.5){\line(3,4){2.27}}%2-R3
\put(9.1,0){\line(3,2){2.27}}%5-2
\put(11.3,-1.5){\line(0,1){3}}%5-2
\put(11.3,-1.5){\line(-3,2){2.27}}%R8-2
\put(6.5,-4){ {\bf Fig. 9}  ${\rm GK_c}(F_4(q))$, $q=p^m$, $p\neq 2$,}
\end{picture}}\\[2cm]

\noindent {\em Type  $E_6^\epsilon$} \ \
Suppose $L=E_6^\epsilon(q)$. The compact form  ${\rm GK}_c(L)$ is depicted in Figure  10.
Here, the set $\{\{3\}, \{p\}, R_1, R_2, R_{\nu_{\epsilon}}(3), R_{\nu_{\epsilon}}(6)\}$ forms a clique, and the remaining vertices are pairwise nonadjacent.  Thus
 ${\rm GK_c}(L)$ is a split graph with a special split partition $C\uplus I$, where
$C=\{\{3\}, \{p\}, R_1, R_2, R_{\nu_{\epsilon}}(3), R_{\nu_{\epsilon}}(6)\}$ and $I=\{R_4, R_8, R_{12}, R_{\nu_{\epsilon}}(5), R_{\nu_{\epsilon}}(9)\}$.

\vspace{1cm} {  \setlength{\unitlength}{4.5mm}
\begin{picture}(0,0)(-12.5,8)
\linethickness{0.3pt} %
\put(0,-2){\circle*{0.4}}%
\put(6,-2){\circle*{0.4}}%
\put(0,8){\circle*{0.4}}%
\put(6,8){\circle*{0.4}}%
\put(-2,0){\circle*{0.4}}%
\put(-2,6){\circle*{0.4}}%
\put(8,0){\circle*{0.4}}%
\put(8,6){\circle*{0.4}}%
\put(-3,3){\circle*{0.4}}%
\put(9,3){\circle*{0.4}}%
\put(3,-3){\circle*{0.4}}%
\put(-1.2,-3){ $R_2$}%
\put(-4.3,-0.2){ $\{p\}$}%
\put(-5.3,6){ $R_{\nu_\epsilon(6)}$}%
\put(-0.5,8.7){ $R_4$}%
\put(5,8.7){ $R_{\nu_\epsilon(5)}$}%
\put(8.2,6){ $R_{\nu_\epsilon(3)}$}%
\put(8.2,-0.2){ $\{3\}$}%
\put(5.5,-3){ $R_1$}%
\put(-6.3,3){ $R_{\nu_\epsilon(9)}$}%
\put(9.2,2.75){ $R_{12}$}%
\put(2.2,-4){ $R_{8}$}%
\put(0,-2){\line(1,0){6}}%p-R1
\put(0,-2){\line(4,1){8}}%p-3
\put(0,-2){\line(1,1){8}}%p-Rv(3)
\put(0,-2){\line(3,5){6}}%p-Rv(5)
\put(0,-2){\line(0,1){10}}%p-R4
\put(0,-2){\line(-1,1){2}}%p-R2
\put(0,-2){\line(-1,4){2}}%p-R1
\put(6,-2){\line(1,1){2}}%R1-3
\put(6,-2){\line(1,4){2}}%R1-Rv(3)
\put(6,-2){\line(0,1){10}}%R1-Rv(5)
\put(6,-2){\line(-3,5){6}}%R1-R4
\put(6,-2){\line(-1,1){8}}%R1-Rv(6)
\put(6,-2){\line(-4,1){8}}%R1-R2
\put(8,-0){\line(0,1){6}}%3-Rv(3)
\put(8,-0){\line(-1,0){10}}%3-R2
\put(8,-0){\line(-1,4){2}}%3-Rv(5)
\put(8,-0){\line(-1,1){8}}%3-R4
\put(8,-0){\line(-5,3){10}}%3-Rv(3)
\put(-2,0){\line(1,0){6}}%R2-3
\put(-2,0){\line(0,1){6}}%R2-Rv(6)
\put(-2,0){\line(1,4){2}}%R2-R4
\put(-2,0){\line(1,1){8}}%R2-Rv(5)
\put(-2,0){\line(5,3){10}}%R2-Rv(3)
\put(-2,0){\line(1,0){6}}%R2-
\put(-2,0){\line(1,0){6}}%R2-
\put(-2,0){\line(1,0){6}}%R2-
\put(8,6){\line(1,-3){1}}%p-R1
\put(3,-3){\line(3,1){3}}%p-R1
\put(3,-3){\line(-3,1){3}}%p-R1
\put(3,-3){\line(5,3){1}}%p-R1
\put(4.2,-2.3){\line(5,3){1}}%p-R1
\put(5.5,-1.5){\line(5,3){1}}%p-R1
\put(6.75,-0.75){\line(5,3){1}}%p-R1
\put(-2,6){\line(1,0){10}}%Rv(6)-Rv(3)
\put(-2,6){\line(1,1){2}}%Rv(6)-R4
\put(-1,-5.5){ {\bf Fig. 10}  ${\rm GK_c}(E_6^\epsilon(q))$.}
\put(8,-1.5){$(q-\epsilon 1)_3\neq 3$ and $p\neq 3$.}
\end{picture}}\vspace{6.5cm}

\noindent {\em Type  $E_7$} \ \
Suppose $L=E_7(q)$. The compact form ${\rm GK}_c(L)$ is depicted in Figure 11.
Here, the set $\{\{p\}, R_1, R_2, R_3, R_4, R_6\}$ forms a clique, and the remaining vertices are pairwise nonadjacent.
Thus, the compact form
 ${\rm GK_c}(L)$ is a split graph with a special split partition $C\uplus I$, where
$C=\{\{p\}, R_1, R_2, R_3, R_4, R_6\}$  and $I=\{R_5, R_7, R_8, R_9, R_{10}, R_{12}, R_{14}, R_{18}\}$.

\vspace{3cm} {  \setlength{\unitlength}{4.5mm}
\begin{picture}(0,0)(-12.2,5)
\linethickness{0.4pt} %
\put(0,0){\circle*{0.4}}%
\put(6,0){\circle*{0.4}}%
\put(0,10){\circle*{0.4}}%
\put(1,3){\circle*{0.4}}%
\put(3,-1){\circle*{0.4}}%
\put(6,10){\circle*{0.4}}%
\put(-2,8){\circle*{0.4}}%
\put(8,8){\circle*{0.4}}%
\put(-2,2){\circle*{0.4}}%
\put(8,2){\circle*{0.4}}%
\put(-2,0){\circle*{0.4}}%
\put(0,-2){\circle*{0.4}}%
\put(6,-2){\circle*{0.4}}%
\put(8,0){\circle*{0.4}}%
\put(-3.8,1.8){$\{p\}$}%
\put(-3.8,7.8){ $R_6$}%
\put(-0.7,10.5){ $R_{12}$}%
\put(0.9,3.3){\tiny $R_{10}$}%
\put(2.3,-2){ $R_{5}$}%
\put(5.5,10.5){ $R_{8}$}%
\put(8.2,7.8){ $R_{4}$}%
\put(8.2,1.8){ $R_3$}%
\put(-1.4,-0.8){ $R_1$}%
\put(6,-0.8){ $R_2$}%
\put(-3.8,-0.2){ $R_7$}%
\put(0.2,-2.2){ $R_9$}%
\put(4.1,-2.2){ $R_{14}$}%
\put(8.2,-0.2){ $R_{18}$}%
\put(0,0){\line(1,0){6}}%R1-R2
\put(0,0){\line(3,-1){3}}%R1-R5
\put(0,0){\line(1,3){1}}%R1-R10
\put(0,0){\line(0,1){10}}%R1-R12
\put(0,0){\line(4,1){8}}%R1-R3
\put(0,0){\line(3,5){6}}%R1-R8
\put(0,0){\line(1,1){8}}%R1-R4
\put(0,0){\line(-1,1){2}}%R1-p
\put(0,0){\line(-1,4){2}}%R1-R6
\put(0,0){\line(-1,0){2}}%R1-R7
\put(0,0){\line(0,-1){2}}%R1-R9
\put(6,0){\line(0,1){10}}%R2-R8
\put(6,0){\line(1,1){2}}%R2-R3
\put(6,0){\line(1,4){2}}%R2-R4
\put(6,0){\line(-1,1){8}}%R2-R6
\put(6,0){\line(-3,5){6}}%R2-R12
\put(6,0){\line(-5,3){5}}%R2-R10
\put(6,0){\line(-3,-1){3}}%R2-R5
\put(6,0){\line(-4,1){8}}%R2-p
\put(6,0){\line(0,-1){2}}%R2-R14
\put(6,0){\line(1,0){2}}%R2-R18
\put(-2,2){\line(1,0){10}}%p-R3
\put(-2,2){\line(0,1){6}}%p-R6
\put(-2,2){\line(1,4){2}}%p-R12
\put(-2,2){\line(5,-3){5}}%p-R5
\put(-2,2){\line(3,1){3}}%p-R10
\put(-2,2){\line(5,3){10}}%p-R4
\put(-2,2){\line(1,1){8}}%p-R8
\put(8,2){\line(0,1){6}}%R3-R4
\put(8,2){\line(-1,1){8}}%R3-R12
\put(8,2){\line(-5,3){10}}%R3-R6
\put(8,2){\line(-5,-3){5}}%R3-R5
\put(8,2){\line(0,1){4}}%R3-p
\put(-2,8){\line(1,0){10}}%R6-R4
\put(-2,8){\line(1,1){2}}%R6-R12
\put(-2,8){\line(3,-5){3}}%R6-R10
\put(8,8){\line(-1,1){2}}%R4-R8
\put(-0.5,-4.5){ {\bf Fig. 11}  ${\rm GK_c}(E_7(q))$.}
\end{picture}}\vspace{5cm}

\noindent {\em Type  $E_8$} \ \
Suppose $L=E_8(q)$. The compact form ${\rm GK}_c(L)$ is depicted in Figure 12. Here, the vector from $5$ to $R_4$ and the dotted edge $\{5,R_{20}\}$ indicate that $R_4$ and $R_{20}$ are not connected, but if $5\in R_4$ (i.e., $q^2\equiv  -1\pmod{5}$), then there exists an edge joining $5$ and $R_{20}$. Now $\{R , R_3 , R_4 , R_6\}$ forms a clique, and the remaining vertices are pairwise nonadjacent. Thus
 ${\rm GK_c}(L)$ is a split graph with a special split partition $C\uplus I$, where
$C=\{R , R_3 , R_4 , R_6\}$  and $I=\{R_5, R_8, R_9, R_{10}, R_{12}, R_{15}, R_{18}, R_{20}, R_{24}, R_{30}\}$.

\vspace{0.5cm} {  \setlength{\unitlength}{4.5mm}
\begin{picture}(0,0)(-15.5,8)
\linethickness{0.4pt} %
\put(0,0){\circle*{0.4}}%
\put(0,5){\circle*{0.4}}%
\put(-2,8){\circle*{0.4}}%
\put(2,8){\circle*{0.4}}%
\put(-4,4){\circle*{0.4}}%
\put(4,4){\circle*{0.4}}%
\put(-6,7){\circle*{0.4}}%
\put(6,7){\circle*{0.4}}%
\put(-6,-1){\circle*{0.4}}%
\put(6,-1){\circle*{0.4}}%
\put(0,-2){\circle*{0.4}}%
\put(0,-4){\circle*{0.4}}%
\put(0,-5){\circle*{0.4}}%
\put(-3,-5){\circle*{0.4}}%
\put(3,-5){\circle*{0.4}}%
\put(0,5){\line(0,-1){5}}%X-R4
\put(0,5){\line(1,-1){6}}%X-R10
\put(0,5){\line(4,-1){4}}%X-R6
\put(0,5){\line(3,1){6}}%X-R18
\put(0,5){\line(2,3){2}}%X-R8
\put(0,5){\line(-2,3){2}}%X-R12
\put(0,5){\line(-3,1){6}}%X-R9
\put(0,5){\line(-4,-1){4}}%X-R3
\put(0,5){\line(-1,-1){6}}%X-R5
\put(0,0){\line(6,-1){6}}%R4-R10
\put(0,0){\line(-6,-1){6}}%R4-R5
\put(0,0){\line(1,1){4}}%R4-R6
\put(0,0){\line(-1,1){4}}%R4-R3
\put(0,0){\line(1,4){2}}%R4-R8
\put(0,0){\line(-1,4){2}}%R4-R12
\put(6,-1){\line(-2,5){2}}%R10-R6
\put(-6,-1){\line(2,5){2}}%R5-R3
\put(4,4){\line(2,3){2}}%R6-R18
\put(4,4){\line(-1,2){2}}%R6-R8
\put(4,4){\line(-3,2){6}}%R6-R12
\put(-4,4){\line(-2,3){2}}%R6-R9
\put(-4,4){\line(1,2){2}}%R6-R12
\put(-4,4){\line(3,2){6}}%R6-R8
\put(-4,4){\line(1,0){8}}%R6-R8
\put(0,-2){\vector(0,1){1.8}}
\put(-0.1,-3.3){$\vdots$}
\put(-0.30,5.6){$R$}%
\put(0.3,-0.9){$R_{4}$}%
\put(0.4,-2.2){$\{5\}$}%
\put(0.4,-4.2){$R_{20}$}%
\put(6.3,-1){$R_{10}$}%
\put(-7.4,-1){$R_{5}$}%
\put(4.4,4){$R_{6}$}%
\put(-5.4,4){$R_{3}$}%
\put(5.4,7.5){$R_{18}$}%
\put(-6.7,7.5){$R_{9}$}%
\put(1.5,8.5){$R_{8}$}%
\put(-2.6,8.5){$R_{12}$}%
\put(0.4,-5.2){$R_{24}$}%
\put(-2.6,-5.2){$R_{15}$}%
\put(3.4,-5.2){$R_{30}$}%
\put(-7,-7){ {\bf Fig. 12}  ${\rm GK_c}(E_8(q))$, $R=R_1\cup R_2\cup \{p\}$.}
\end{picture}}\vspace{7.5cm}

\noindent {\em Type  ${^2B}_2$} \ \
Let $L={^2B}_2(q)$, $q=2^{2n+1}$ $(n\geqslant
1)$. By \cite{shi}, we know that $$\mu(L)=\left\{4, \
q-1, \ q-\sqrt{2q}+1, \ q+\sqrt{2q}+1\right\}.$$
The compact form ${\rm GK_c}(L)$ is depicted in Figure 13.

\vspace{0.3cm} {  \setlength{\unitlength}{4.5mm}
\begin{picture}(0,0)(-10,0)
\linethickness{0.3pt} %
\put(13,0){\circle*{0.4}}%q+
\put(5,0){\circle*{0.4}}%q-
\put(-6,0){\circle*{0.4}}%3
\put(-1,0){\circle*{0.4}}%2
\put(-6.2,-1.5){$\{2\}$}%
\put(-2.5,-1.5){$\pi(q-1)$}%
\put(2.5,-1.5){$\pi(q-\sqrt{2q}+1)$}%
\put(10.5,-1.5){$\pi(q+\sqrt{2q}+1)$}%
\put(-3,-3.5){ {\bf Fig. 13}  \  ${\rm GK_c}({^2B}_2(q))$, $q=2^{2m+1}>2.$}
\end{picture}}\vspace{2.2cm}

Thus
 ${\rm GK_c}(L)$ is a split graph with a special split partition $C\uplus I$, where
 $$\begin{array}{l}
C=\{\{2\}\}   \ \mbox{and} \  I=\{\pi(q-1), \pi(q-\sqrt{2q}+1), \pi(q+\sqrt{2q}+1) \},\\[0.3cm]
C=\{\pi(q-1)\}  \ \mbox{and} \   I=\{\{2\}, \pi(q-\sqrt{2q}+1), \pi(q+\sqrt{2q}+1) \},\\[0.3cm]
C=\{\pi(q-\sqrt{2q}+1)\} \  \mbox{and} \  I=\{\{2\}, \pi(q-1), \pi(q+\sqrt{2q}+1) \}, \ \mbox{or}\\[0.3cm]
C=\{\pi(q+\sqrt{2q}+1)\} \  \mbox{and} \  I=\{\{2\}, \pi(q-1), \pi(q-\sqrt{2q}+1)\}.
\end{array}$$

\noindent {\em Type  ${^3D}_4$} \ \
Suppose  $L={^3D}_4(q)$, $q=p^m$. The compact form  ${\rm GK_c}(L)$ is shown in Figure 14.
Therefore, if $R_6\neq \emptyset$, then ${\rm GK_c}(L)$ is split, with special split partition $C\uplus I$, where
$$C=\{R, R_3\}, \ \ I=\{R_6, R_{12}\} \ \ \ \ \ \mbox{or} \ \ \ \ \ \  C=\{R, R_{6}\}, \ \ I=\{R_3, R_{12}\}. $$
When $R_6=\emptyset$, we have the special split partition $C\uplus I$, where $C=\{R, R_3\}$ and $I=\{R_{12}\}$.

\vspace{1cm} {  \setlength{\unitlength}{4.5mm}
\begin{picture}(0,0)(-15.5,0)
\linethickness{0.3pt} %
\put(3,0){\circle*{0.4}}%\pi_3
\put(-3,0){\circle*{0.4}}%q-1
\put(-6,0){\circle*{0.4}}%q+1
\put(0,0){\circle*{0.4}}%2
\put(-6.8,0.5){ $R_3$}%
\put(-0.8,0.5){ $R_6$}%
\put(-3.3,0.5){$R$}%
\put(2,0.5){ $R_{12}$}%
\put(-6,0){\line(1,0){6}}%5-2
\put(-9.5,-2){ {\bf Fig. 14}  \  ${\rm GK_c}({^3D}_4(q))$, $R=R_1\cup R_2\cup \{p\}$.}
\end{picture}}\vspace{1.5cm}

\noindent {\em Type  ${^2G}_2$} \ \
Let $L$ be the simple group ${^2G}_2(q)$, $q=3^{2m+1}$, $m\geqslant
1$. Then by Lemma 4 in \cite{brandl} (see also \cite[Sec. XI.13]{Huppert}), we have
$$\mu(L)=\{6, 9, q-1, (q+1)/2, q-\sqrt{3q}+1,
q+\sqrt{3q}+1\}.$$
In particular, $$\mu_2 (L) = \{q-\sqrt{3q}+1\} \ \ \ \mbox{and} \ \ \ \mu_3(L) = \{q+\sqrt{3q}+1\}.$$
The compact form ${\rm GK_c}(L)$ is depicted in Figure 15. Thus
 ${\rm GK_c}(L)$ is a split graph with a special split partition $C\uplus I$, where
$$C=\{\{2\}, \{3\}, \pi((q-1)/2)\} \ \ \mbox{and} \ \ I=\{\pi((q+1)/4), \pi(q-\sqrt{3q}+1), \pi(q+\sqrt{3q}+1) \}, \ \mbox{or}$$
$$C=\{\{2\}, \{3\}, \pi((q+1)/4)\} \ \ \mbox{and} \ \ I=\{\pi((q-1)/2), \pi(q-\sqrt{3q}+1), \pi(q+\sqrt{3q}+1) \}.$$

\vspace{1cm} {  \setlength{\unitlength}{4.5mm}
\begin{picture}(0,0)(-11,0)
\linethickness{0.3pt} %
\put(7,1.5){\circle*{0.4}}%\pi_3
\put(7,-1.5){\circle*{0.4}}%\pi_3
\put(0,1.5){\circle*{0.4}}%q-1
\put(0,-1.5){\circle*{0.4}}%q+1
\put(-5,0){\circle*{0.4}}%3
\put(-2.3,0){\circle*{0.4}}%2
\put(0.5,1.3){$\pi((q-1)/2)$}%
\put(0.5,-1.7){$\pi((q+1)/4)$}%
\put(-5.5,0.6){$\{3\}$}%
\put(-3.4,0.6){$\{2\}$}%
\put(7.8,-1.7){$\pi(q-\sqrt{3q}+1)$}%
\put(7.8,1.3){$\pi(q+\sqrt{3q}+1)$}%
\put(0,1.5){\line(-3,-2){2.27}}%5-2
\put(0,-1.5){\line(-3,2){2.27}}%5-3
\put(-5,0){\line(1,0){2.7}}%3-7
\put(-2,-4){ {\bf Fig. 15}  \  ${\rm GK_c}({^2G}_2(q))$, $q=3^{2m+1}>3.$}
\end{picture}}\vspace{2.5cm}

\noindent {\em Type  ${^2F}_4$} \ \
Let $L={^2F}_{4}(q)$, where $q=2^{2m+1}$.
Note that, this group is nonabelian simple unless $m=0$. In this case, the group ${^2F}_4(2)'$ is simple and is called the Tits group. The set $\omega(L)$ is exactly the set of
all divisors of the following numbers (see \cite{Deng-Shi}):
\begin{itemize}
\item[{\rm (1)}] $12$, $16$,
$2(q+1)$, $4(q-1)$, $4(q\pm \sqrt{2q}+1), q^2\pm 1$, $q^2-q+1$,
$(q+1)(q\pm\sqrt{2q}+1)$;

\item[{\rm (2)}] $q^2-\sqrt{2q^3}+q-\sqrt{2q}+1$;

\item[{\rm (3)}]  $q^2+\sqrt{2q^3}+q+\sqrt{2q}+1$.
\end{itemize}
Note that $\mu_2 (L) = \{q^2+\sqrt{2q^3}+q+\sqrt{2q}+1\}$ and $\mu_3(L) = \{q^2+\sqrt{2q^3}+q+\sqrt{2q}+1\}$. The compact form ${\rm GK_c}(L)$ is depicted in Figure  16, where  $U=\pi(q^2-q+1)\setminus \{3\}$, $V=\pi(q-\sqrt{2q}+1)$. Now $\{\{2\}, \{3\} , V, \pi(q+\sqrt{2q}+1), \pi(q+1)\setminus \{3\}\}$ forms a clique, and the remaining vertices are pairwise nonadjacent. Thus
 ${\rm GK_c}(L)$ is a split graph with a special split partition $C\uplus I$, where
$$C=\{\{2\}, \{3\} , V, \pi(q+\sqrt{2q}+1), \pi(q+1)\setminus \{3\}\} ,$$ and
$$I=\{U, \pi(q-1), \pi(q^2-\sqrt{2q^3}+q-\sqrt{2q}+1), \pi(q^2+\sqrt{2q^3}+q+\sqrt{2q}+1)\}.$$
If $L={^2F}_4(2)'$, then we have $\mu(L)=\{12, 13, 16, 20\}$. Therefore, ${\rm GK_c}(L)={\rm GK}(L)$
 is a split graph with a special split partition $C\uplus I$ where $C=\{2, 3\}$ and $I=\{5, 13\}$, or $C=\{2, 5\}$ and $I=\{3, 13\}$.

{\setlength{\unitlength}{4.5mm}
\begin{picture}(0,0)(-7,5)
\linethickness{0.4pt} %
\put(-2,0){\circle*{0.4}}%q2-q+1
\put(0,0){\circle*{0.4}}%3
\put(8,0){\circle*{0.4}}%q+1
\put(5,1){\circle*{0.4}}%q-sqrt2q+1
\put(5,-1){\circle*{0.4}}%2
\put(8,4){\circle*{0.4}}%q-sqrt2q+1
\put(8,-4){\circle*{0.4}}%q-1
\put(12,2){\circle*{0.4}}%q-1
\put(12,-2){\circle*{0.4}}%q-1
\put(-3,0){$U$}
\put(-0.8,0.5){$\{3\}$}
\put(4.1,-1.9){$\{2\}$}
\put(4.5,1.5){$V$}
\put(8.5,3.8){$\pi(q+\sqrt{2q}+1)$}
\put(8.5,-0.2){$\pi(q+1)\setminus \{3\}$}
\put(8.5,-4.2){$\pi(q-1)$}
\put(12.7,-2.2){$\pi(q^2-\sqrt{2q^3}+q-\sqrt{2q}+1)$}
\put(12.7,1.8){$\pi(q^2+\sqrt{2q^3}+q+\sqrt{2q}+1)$}
\put(-2,0){\line(1,0){10}}%(q2-q+1)-(q+1)
\put(0,0){\line(5,1){5}}%(3)-(q+\sqrt2q+1)
\put(0,0){\line(5,-1){5}}%(3)-(q-1)
\put(0,0){\line(2,1){8}}%(3)-(q-\sqrt2q+1)
\put(0,0){\line(2,-1){8}}%(3)-(2)
\put(5,1){\line(1,1){3}}%(3)-(q-\sqrt2q+1)
\put(5,1){\line(3,-1){3}}%(3)-(2)
\put(5,1){\line(0,-1){2}}%(3)-(q-\sqrt2q+1)
\put(5,-1){\line(3,5){3}}%(3)-(2)
\put(5,-1){\line(3,1){3}}%(3)-(q-\sqrt2q+1)
\put(5,-1){\line(1,-1){3}}%(3)-(2)
\put(8,4){\line(0,-1){8}}%(3)-(2)
\put(4.25,-6.5){ {\bf Fig. 16}  ${\rm GK_c}({^2F}_{4}(q))$, $q=2^{2m+1}$.}
\end{picture}}\vspace{6cm}

We have thus examined all simple groups of Lie type, and Theorem C is proved.
%\end{proof}

%%%%%%%%%%%%%%%%%%%%%%%%%%%%%%%%%%%%%%%%%%%%%%%

%%%%%%%%%%%%%%%%%%%%%%%%%%%%%%%%%%%%%%%%%%%%%%%%%

\section{\sc Examples of Nonsplitness}
\subsection{\sc Nonsplitness of ${\rm GK}(L)$ and $\mathcal{S}(L)$}

As pointed out in the introduction, the prime graph of a finite simple group of Lie type is generally nonsplit. For brevity, we will illustrate this only in the case of linear groups. But the same reasoning works for other classical families as well.

\begin{proposition}\label{nonspl-GK}
Let $L=A_{n-1}(q)={\rm PSL}_{n}(q)$ be a finite simple linear group, where $q$ is a power of a prime $p$. Assume that $q>p$ and $n>11$. Then the prime graph ${\rm GK}(L)$ is nonsplit.
\end{proposition}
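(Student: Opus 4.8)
The plan is to produce an induced $2K_2$ in ${\rm GK}(L)$ and then apply the forbidden subgraph characterization (Proposition~\ref{forbidden}). Write $q=p^a$; the hypothesis $q>p$ means $a\geqslant 2$, and $n>11$ gives $n={\rm prk}(L)\geqslant 4$, so Lemmas~\ref{divisors} and~\ref{vasi-algebra} are available for $L=L_n^{+}(q)$, and for this linear group $\varphi(r,L)=e(r,\epsilon q)=e(r,q)$ for every prime $r\neq p$. Thus primes with a fixed value $e(\cdot,q)=k$ in the range $n/2<k\leqslant n$ are pairwise adjacent, while primes with distinct such values are pairwise nonadjacent; it therefore suffices to find two exponents $i\neq j$ with $n/2<i,j\leqslant n$ for which both $R_i(q)$ and $R_j(q)$ contain at least two primes.

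First I would choose $i$ and $j$. The integers in $(n/2,n]$ form a block of at least six consecutive integers, each of them at least $7$ (this is where $n\geqslant 12$ is used). Letting $\ell$ be the least prime divisor of $a$, among any six consecutive integers at least three are not divisible by $\ell$, so I can fix two distinct integers $i,j\in(n/2,n]$ with $\ell\nmid i$ and $\ell\nmid j$; since $\ell\in\pi(a)$, this yields $\pi(a)\nsubseteq\pi(i)$ and $\pi(a)\nsubseteq\pi(j)$. For $k\in\{i,j\}$ set $a'=(a)_{\pi(k)}$; then $ka\geqslant 2k\geqslant 14$ and $ka'\geqslant k\geqslant 7$, so by Theorem~\ref{zsig} the sets $R_{ka}(p)$ and $R_{ka'}(p)$ are nonempty, and Lemma~\ref{divisors} (applicable since $k>1$ and $\pi(a)\nsubseteq\pi(k)$) gives $|R_k(q)|>1$.

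Now pick distinct primes $r_i,r_i'\in R_i(q)$ and distinct primes $r_j,r_j'\in R_j(q)$. These four primes are pairwise distinct, since $R_i(q)\cap R_j(q)=\emptyset$ (different $e$-values); they all differ from $p$ and divide $|L|$, being primitive prime divisors of $q^i-1$ or $q^j-1$ with $2\leqslant i,j\leqslant n$; and they are odd, as $e(2,q)\leqslant 2$ while $i,j\geqslant 7$. By Lemma~\ref{vasi-algebra}(iv), $r_i\sim r_i'$ and $r_j\sim r_j'$, because within each pair the two primes share the same $e$-value. By Lemma~\ref{vasi-algebra}(iii), since $n/2<e(r_i,q)=i\leqslant n$, $n/2<e(r_j,q)=j\leqslant n$, and $i\neq j$, none of $r_i,r_i'$ is adjacent to either of $r_j,r_j'$ in ${\rm GK}(L)$. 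Hence ${\rm GK}(L)[\{r_i,r_i',r_j,r_j'\}]\cong 2K_2$, and ${\rm GK}(L)$ is not split by Proposition~\ref{forbidden}.

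The crux of the argument, and the only place where $q>p$ is genuinely used, is the guarantee that two of the layers $R_i(q)$, $R_j(q)$ with $n/2<i,j\leqslant n$ each contain at least two primes; this is exactly what Lemma~\ref{divisors} supplies when $a\geqslant 2$, and I expect verifying its hypotheses ($\pi(a)\nsubseteq\pi(k)$ and the nonemptiness of the relevant $R_{ka}(p)$, $R_{ka'}(p)$) to be the main routine hurdle. When $q=p$ the layers $R_i(p)$ may all be singletons (for instance $127\in R_7(2)$ is alone), the ``large'' part of ${\rm GK}(L)$ then carries no edges, and this particular obstruction to splitness is absent.
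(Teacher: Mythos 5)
Your proof is correct and follows essentially the same route as the paper's: both locate two exponents in the upper half $(n/2,n]$ avoiding a prime divisor of $a$ so that Lemma~\ref{divisors} yields at least two primitive prime divisors in each of the corresponding layers $R_{k}(q)$, and then invoke Lemma~\ref{vasi-algebra}(iii)--(iv) together with Proposition~\ref{forbidden} to exhibit an induced $2K_2$. Your write-up is in fact somewhat more explicit than the paper's in verifying the hypotheses of Lemma~\ref{divisors} and in constructing the two exponents via the least prime divisor of $a$.
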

\begin{proof}
Let $q=p^a$, where $a\geqslant 2$. Let $k>1$ be an integer and denote $a'=(a)_{\pi(k)}$. By Theorem~\ref{zsig}, if at least one of the sets $R_{ka}(p)$ and $R_{ka'}(p)$ is empty, then $p=2$, and either $k=2$ and $a=3$, or $k=3$ and $a=2$, or $k=6$ and $a'=1$.
Since $n>11$, we have that $R_k(p)$ is nonempty for any integer $k\in (\frac{n}{2},n)$.
Moreover, the interval $(\frac{n}{2},n)$ contains at least two numbers $k_1$ and $k_2$, such that $\pi(a)\nsubseteq\pi(k_i)$. % and the sets $R_{k_ia}(p)$ and $R_{k_ia'}(p)$ are nonempty.
It follows from Lemma \ref{divisors} that $|R_{k_1}(q)|>1$ and $|R_{k_2}(q)|>1$. The sets $R_{k_1}(q)$ and $R_{k_2}(q)$ are cliques in ${\rm GK}(L)$. On the other hand, any two numbers $r\in R_{k_1}(q)$ and $s\in R_{k_2}(q)$ are non-adjacent in ${\rm GK}(L)$ by Lemma~\ref{vasi-algebra}(iii), since $k_1+k_2>n$ and $k_1$, $k_2$ do not divide each other. Let $r_1,r_2\in R_{k_1}(q)$ and $s_1,s_2\in R_{k_2}(q)$. Then the vertices $r_1,r_2,s_1,s_2$ induce the subgraph isomorphic to $2K_2$, and hence ${\rm GK}(L)$ is not a split graph by Proposition \ref{forbidden}.
\end{proof}

The idea of the foregoing proof was to find two sets $R_i(q)$ and $R_j(q)$ such that any two vertices $r\in R_i(q)$ and $s\in R_j(q)$ lie in a maximal coclique of ${\rm GK}(L)$. This logic is also applicable for a solvable graph. Recall, however, that the prime graph of a group is a subgraph of the solvable graph of this group, so the situation here is more sophisticated. Our next example requires more strict conditions on a group (but one can construct other infinite series of examples in the same way).

\begin{proposition}
Let $L=A_{n-1}(q)={\rm PSL}_{n}(q)$ be a finite simple linear group, where $q=p^3$. Assume that $n=uw$, where $u$ and $w$ are distinct odd primes such that $n\equiv -1\pmod{3}$. Then the solvable graph $\mathcal{S}(L)$ is nonsplit.
\end{proposition}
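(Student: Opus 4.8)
The plan is to exhibit an induced copy of $2K_2$ in $\mathcal{S}(L)$ and conclude by the forbidden subgraph characterization (Proposition \ref{forbidden}). As in the proof of Proposition \ref{nonspl-GK}, I would look for the four vertices inside two sets $R_{m_1}(q)$ and $R_{m_2}(q)$ of primitive prime divisors with ``large'' indices: by Lemma \ref{vasi-algebra}(iv) each such set induces a clique in ${\rm GK}(L)\subseteq\mathcal{S}(L)$, and by Lemma \ref{vasi-algebra}(iii) no member of one is ${\rm GK}(L)$-adjacent to a member of the other as soon as $m_1\neq m_2$ and both indices lie in $(n/2,n]$. The place where $\mathcal{S}(L)$ genuinely differs from ${\rm GK}(L)$ is that we must in addition forbid a \emph{solvable} subgroup of $L$ whose order is divisible by $rs$ for $r\in R_{m_1}(q)$ and $s\in R_{m_2}(q)$; controlling this is exactly what Lemma \ref{AK31} does, and it dictates the choice of indices.

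Concretely, I would take $m_1=n$ and $m_2=n-1$. The hypothesis $n\equiv-1\pmod 3$ enters twice: it forces $3\nmid n$, hence $3\notin\{u,w\}$ and $n=uw\geqslant 35$ (so $n\geqslant 4$ and $n-1>n/2$); and it makes $3$ divide neither $n$ nor $n-1$. Since $q=p^3$, i.e. $a=3$, Lemma \ref{divisors} applies to $k\in\{n,n-1\}$: one has $\pi(a)=\{3\}\nsubseteq\pi(k)$, so $a'=(a)_{\pi(k)}=1$, the sets $R_{3k}(p)$ and $R_k(p)$ are nonempty by Theorem \ref{zsig} (the indices are far too large to hit any listed exception), and therefore $|R_n(q)|\geqslant 2$ and $|R_{n-1}(q)|\geqslant 2$. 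These two sets are disjoint, consist of primes of $\pi(L)\setminus\{p\}$, and --- using $\varphi(r,L)=e(r,q)$ for linear groups and ${\rm prk}(L)=n\geqslant 4$ --- each is a clique in ${\rm GK}(L)$ while, by Lemma \ref{vasi-algebra}(iii), no member of $R_n(q)$ is ${\rm GK}(L)$-adjacent to a member of $R_{n-1}(q)$.

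What remains, and this is the main obstacle, is to upgrade that last non-adjacency from ${\rm GK}(L)$ to $\mathcal{S}(L)$. Suppose $r\in R_n(q)$ and $s\in R_{n-1}(q)$ with $r\approx s$; then some solvable $H\leqslant L$ has $rs\mid|H|$, and a Hall $\{r,s\}$-subgroup $H_0$ of $H$ is a solvable subgroup of $L$ with $rs\mid|H_0|$ and $(|H_0|,q)=1$ (since $r,s\neq p$). Applying Lemma \ref{AK31} with $m=n>l=n-1>n/2$ gives $s\mid n$; but $e(s,q)=n-1$ divides $s-1$, so $s\geqslant n$, and hence $s=n$, which is impossible because $n=uw$ is composite while $s$ is prime. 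Therefore $R_n(q)$ and $R_{n-1}(q)$ are completely non-adjacent in $\mathcal{S}(L)$; picking distinct $r_1,r_2\in R_n(q)$ and distinct $s_1,s_2\in R_{n-1}(q)$ yields an induced $2K_2$, so $\mathcal{S}(L)$ is not split. I expect the delicate point to be this solvable-subgroup step: one must be sure that Lemma \ref{AK31}, together with the Hall reduction and the restriction $m>l>n/2$, really rules out \emph{every} solvable overgroup of an order-$rs$ subgroup. The arithmetic hypotheses serve precisely to keep $|R_n(q)|$ and $|R_{n-1}(q)|$ at least $2$ while making the contradiction ``$s\mid n$, $s\geqslant n$, $n$ composite'' available.
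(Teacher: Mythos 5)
Your proposal is correct and follows essentially the same route as the paper: use Lemma \ref{divisors} (with $a=3$ and $3\nmid n(n-1)$) to get $|R_n(q)|,|R_{n-1}(q)|\geqslant 2$, observe each set is a clique, and rule out $\mathcal{S}(L)$-edges between them via Lemma \ref{AK31} plus the Fermat/order argument forcing $s\geqslant n$ against $s\mid n$. Your explicit Hall $\{r,s\}$-subgroup reduction to meet the coprimality hypothesis of Lemma \ref{AK31} is a detail the paper leaves implicit, but otherwise the two arguments coincide.
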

\begin{proof}
We have that $n$ and $n-1$ are not divisible by $3$. So, $|R_n(q)|>1$ and $|R_{n-1}(q)|>1$ by Lemma~\ref{divisors}, and each of these two sets is a clique in ${\rm GK}(L)$, and hence in $\mathcal{S}(L)$. We claim that any two pairs $r_1,r_2\in R_n(q)$ and $s_1,s_2\in R_{n-1}(q)$ induce the subgraph isomorphic to $2K_2$. Assume the contrary. Then $L$ contains a solvable subgroup $H$, whose order is divisible by $rs$, where $r\in R_n(q)$ and $s\in R_{n-1}(q)$. It follows from Lemma \ref{AK31} that $s$ divides $n$, and hence either $s=u$ or $s=w$. But $s$ is a primitive prime divisor of $q^{n-1}-1$, and $s$ divides $q^{s-1}-1$ by Fermat's little theorem, so $n-1$ divides $s-1$. In particular, $n\leqslant s$, which is a contradiction.
\end{proof}

\subsection{\sc Nonsplitness of $\mathcal{S}_{\rm c}(L)$}\label{Sc_nonsplit}

We proved that the graph ${\rm GK_c}(L)$ is split for any finite simple group $L$ of Lie type. It turns out that the same assertion is not true for the graph ${\cal S}_{\rm c}(L)$, as the following example shows.

\medskip

\textbf{\sc Example}.
Let $L=A_{10}(2)={\rm PSL}_{11}(2)$, and let $r=r_{11}(2)$. Then $L$ has a solvable subgroup of order divisible by $11\cdot r$, which can be obtained as the normalizer of an $r$-subgroup of $L$. Since $2$ is a primitive root modulo $11$, we have that $11=r_{10}(2)$. Thus, the vertices $r_{10}(2)$ and $r_{11}(2)$ are adjacent in the solvable graph $\mathcal{S}(L)$. Note that the pair $\{r_n(q),r_{n-1}(q)\}$ cannot be an edge in the prime graph of ${\rm PSL}_n(q)$ for any $n$, $q$ (moreover, generally these two vertices are nonadjacent in the solvable graph too).
The graph ${\cal S}_{\rm c}(L)$ is depicted in Figure 17 (a vertex labelled $R_i$ denotes the set $R_i(2)$).
The vertices $r_3(2)$, $r_7(2)$, $r_{10}(2)$ and $r_{11}(2)$ induce the forbidden subgraph in $\mathcal{S}(L)$, and $R_3$, $R_7$, $R_{10}$ and $R_{11}$ --- in ${\cal S}_{\rm c}(L)$ (for details, see the proof of Proposition \ref{nonspl-Sc}).

\vspace{1cm} {  \setlength{\unitlength}{5.5mm}
\begin{picture}(0,0)(-10,4)
\linethickness{0.3pt} %
\put(0,0){\circle*{0.4}}%R_4
\put(6,0){\circle*{0.4}}%R_10
\put(6,-2){\circle*{0.4}}%R_11
\put(2,-2){\circle*{0.4}}%R_3
\put(4,-2){\circle*{0.4}}%2, R_2
\put(0,2){\circle*{0.4}}%R_5
\put(6,2){\circle*{0.4}}%R_9
\put(2,4){\circle*{0.4}}%R_7
\put(4,4){\circle*{0.4}}%R_8

\put(2.7,-3){ $\{2\}\cup R_2$}%
\put(6.3,-2.3){ $R_{11}$}%
\put(6.3,-0.3){ $R_{10} $}%
\put(6.3,2.2){ $R_9 $}%
\put(3.6,4.6){ $R_8 $}%
\put(1.2,4.6){ $R_7 $}%
\put(-1.5,2.2){ $R_5 $}%
\put(-1.5,-0.5){ $R_4 $}%
\put(0.9,-2.9){ $R_3$ }      %

\put(4,-2){\line(1,1){2}}%2--R10
\put(4,-2){\line(1,2){2}}%2--R9
\put(4,-2){\line(0,1){6}}%2--R8
\put(4,-2){\line(-1,3){2}}%2--R7
\put(4,-2){\line(-1,1){4}}%2--R5
\put(4,-2){\line(-2,1){4}}%2--R4
\put(4,-2){\line(-1,0){2}}%2--R3

\put(2,-2){\line(1,1){4}}%R3--R9
\put(2,-2){\line(1,3){2}}%R3--R8
\put(2,-2){\line(0,1){6}}%R3--R7
\put(2,-2){\line(-1,2){2}}%R3--R5
\put(2,-2){\line(-1,1){2}}%R3--R4

\put(0,0){\line(1,0){6}}%R4--R10
\put(0,0){\line(1,1){4}}%R4--R8
\put(0,0){\line(1,2){2}}%R4--R7
\put(0,0){\line(0,1){2}}%R4--R5

\put(0,2){\line(3,-1){6}}%R5--R10
\put(6,0){\line(0,-1){2}}%R11--R10

\put(-1.5,-4.5){ {\bf Fig. 17}  $\mathcal{S}_{\rm c}({\rm PSL}_{11}(2))$.}
\end{picture}}\vspace{5cm}

The key point here is the adjacency of $r_n$ and $r_{n-1}$ (as well as corresponding vertices of ${\cal S}_{\rm c}(L)$), which is possible if and only if $n\in R_{n-1}(2)$. If we could assume that $2$ is a primitive root for infinitely many prime numbers $n$, we would obtain infinitely many examples of nonsplit graphs for groups ${\rm PSL}_n(2)$ arguing as above. In fact, this assumption is the essence of the well-known number-theoretic conjecture of E.~Artin (see~\cite{Artin27}):

\begin{coj*}[E. Artin, 1927] Any integer $k$, other than $-1$ or a perfect square, is a primitive root for infinitely many primes.
\end{coj*}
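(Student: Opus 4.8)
The statement as worded is Artin's conjecture on primitive roots, which remains \emph{open} unconditionally: the strongest known unconditional input is the theorem of Heath-Brown cited above, which secures the conclusion for all admissible $k$ outside a set of at most two (ineffective) exceptions. A complete proof as stated is therefore beyond current reach without an additional hypothesis, and the only honest target is the classical theorem of Hooley, which establishes the conjecture---indeed with a positive asymptotic density---conditionally on the Generalized Riemann Hypothesis for the Dedekind zeta functions of a certain family of Kummer fields. The plan is to reproduce Hooley's argument and to point out exactly where the unproved hypothesis is indispensable.

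First I would fix an admissible $k$ (so $k\neq -1$ and $k$ is not a perfect square) and let $N_k(x)$ denote the number of primes $p\leqslant x$ for which $k$ is a primitive root modulo $p$. The arithmetic reduction is that $k$ \emph{fails} to be a primitive root mod $p$ precisely when there is a prime $\ell$ with $\ell\mid p-1$ and $k^{(p-1)/\ell}\equiv 1\pmod p$, i.e. when $k$ is an $\ell$-th power residue. By inclusion--exclusion over squarefree moduli $m$ assembled from such primes one writes
$$N_k(x)=\sum_{m}\mu(m)\,P(x,m),$$
where $P(x,m)$ counts primes $p\leqslant x$ with $p\equiv 1\pmod m$ and $k$ an $\ell$-th power residue mod $p$ for every $\ell\mid m$. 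The decisive observation is that this last condition says exactly that $p$ splits completely in the Kummer field $K_m=\mathbb{Q}(\zeta_m,k^{1/m})$ (for squarefree $m$ the compositum of the $k^{1/\ell}$ equals $\mathbb{Q}(k^{1/m})$). Hence by the Chebotarev density theorem $P(x,m)\sim\pi(x)/[K_m:\mathbb{Q}]$, and for admissible $k$ one has $[K_m:\mathbb{Q}]=m\,\phi(m)$ up to a bounded entanglement correction coming from the way the radicals interact with the cyclotomic tower.

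Summing formally yields $N_k(x)\sim\pi(x)\sum_m\mu(m)/(m\,\phi(m))=A\,\pi(x)$, where $A=\prod_\ell\bigl(1-\tfrac{1}{\ell(\ell-1)}\bigr)$ is Artin's constant, multiplied by a rational correction factor when the squarefree part of $k$ is $\equiv 1\pmod 4$; the essential point is that this corrected density is \emph{strictly positive} for every admissible $k$ (it vanishes only for $k=-1$ and for perfect squares, which is why these are excluded), so $N_k(x)\to\infty$ and the conclusion follows. The main obstacle---and the entire crux---is to make the sum rigorous, since the Chebotarev approximation carries an error term and one must sum these errors over $m$ up to about $\sqrt{x}$ (the congruence $p\equiv 1\pmod m$ already forces $m\leqslant x$). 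Following Hooley I would split the range at $\xi_1=\tfrac{1}{6}\log x$, $\xi_2=\sqrt{x}/\log^2 x$ and $\xi_3=\sqrt{x}\log x$: for $m\leqslant\xi_1$ the GRH for $\zeta_{K_m}$ supplies an effective Chebotarev estimate with error $O\bigl(x^{1/2}(\log mx)^2\bigr)$ per modulus, which summed over this short range is $o(x/\log x)$ and hence negligible against the main term; the middle and far ranges are controlled unconditionally by elementary sieve bounds (Brun--Titchmarsh applied to primes $p\equiv 1\pmod m$ with the prescribed residue property). It is exactly the unconditional inability to bound the Chebotarev error uniformly for $m$ up to $\xi_1$ that leaves the conjecture open, and this is the one step where GRH cannot presently be removed.
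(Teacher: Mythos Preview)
Your proposal is correct in its essential point: the statement is Artin's conjecture, it is genuinely open, and no unconditional proof can be expected. The paper agrees with you entirely. It does not prove the statement; immediately after displaying it the authors write ``This conjecture is still unresolved'' and then quote Heath-Brown's theorem (at most two exceptional primes) as the input they actually need. So there is no proof in the paper to compare your attempt against: the item is typeset in a \texttt{coj*} environment precisely because it is a conjecture, not a result of the paper.

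Your sketch of Hooley's conditional argument under GRH is accurate and well organized, but it goes considerably beyond what the paper does or needs. The paper's only use of Artin-type information is Lemma~\ref{GK_c_non} (Heath-Brown), which guarantees infinitely many pairs $(p,n)$ with $p$ a primitive root modulo $n$; that is enough to feed Proposition~\ref{nonspl-Sc} and produce infinitely many nonsplit graphs $\mathcal{S}_{\rm c}({\rm PSL}_n(p))$. In short: nothing is wrong with what you wrote, but the target was a stated open problem rather than a claim requiring proof, and the paper treats it as such.
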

This conjecture is still unresolved, but partial results on it are numerous. In 1984, Gupta and Ram Murty~\cite{Gupta} showed unconditionally that there are infinitely many $k$ satisfying Artin's conjecture.
Heath-Brown \cite{heath-brown} improved the method used by Gupta and Ram Murty and obtained the following:

\begin{lm}[{\cite[Corollary 2]{heath-brown}}]\label{GK_c_non}
There are at most two positive primes for which Artin's conjecture fails.
\end{lm}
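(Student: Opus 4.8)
The statement is equivalent, on taking contrapositives, to the assertion that among any three pairwise distinct primes $q_1,q_2,q_3$ at least one is a primitive root modulo infinitely many primes; this is the form I would prove. The plan is to show that the proportion of primes $p\le x$ modulo which \emph{all three} of $q_1,q_2,q_3$ fail to be primitive roots (the ``bad'' primes) has $\limsup_{x\to\infty}$ strictly less than $1$; a pigeonhole argument over the three primes then forces at least one $q_i$ to be a primitive root for infinitely many primes. First I would record the classical dictionary: for $q$ coprime to a prime $p$, $q$ is a primitive root modulo $p$ exactly when there is no prime $\ell\mid p-1$ with $q$ an $\ell$-th power residue modulo $p$; equivalently, $q$ fails to be a primitive root mod $p$ precisely when $p$ splits completely in one of the Kummer fields $K_\ell(q)=\mathbb{Q}(\zeta_\ell,\,q^{1/\ell})$, which for $q$ and $\ell$ prime has degree $\ell(\ell-1)$ over $\mathbb{Q}$ (with the familiar entanglement corrections when $\ell=2$). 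Thus the bad primes $p\le x$ lie in the union, over triples $(\ell_1,\ell_2,\ell_3)$ of primes, of the sets of primes splitting completely in the compositum $K_{\ell_1}(q_1)\,K_{\ell_2}(q_2)\,K_{\ell_3}(q_3)$.

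Next I would run a sieve, cutting this union at a slowly growing parameter $z=z(x)$. For all $\ell_i\le z$ the contribution is controlled by an effective Chebotarev density theorem applied to the composita above; summing over such triples, the resulting density is bounded by an explicit Euler-product-type quantity which --- crucially --- is strictly smaller than $1$, so that a positive proportion of primes $p\le x$ are already excluded from being bad at these small scales. It is exactly here that working with three primes $q_1,q_2,q_3$ rather than with a single one buys the necessary room, and the inclusion--exclusion making three primes suffice (where Gupta and Ram Murty needed far more) is the principal new device of \cite{heath-brown}.

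The hard part --- and what I expect to be the main obstacle --- is the tail: estimating the number of primes $p\le x$ for which some $q_i$ is an $\ell$-th power residue modulo $p$ for a prime $\ell\mid p-1$ with $\ell>z$, and doing it \emph{without} assuming the Riemann Hypothesis for the relevant $L$-functions. Chebotarev's theorem is not uniform enough in the modulus $\ell$ for this, so one must instead combine Brun--Titchmarsh, a large-sieve (bilinear) bound for the family of Kummer extensions that exploits the congruence $p\equiv1\pmod\ell$, and a Bombieri--Vinogradov-type average over $\ell$, finally optimizing the cutoff $z$ against the resulting error terms. Combining the positive lower bound from the small-$\ell$ range with the negligible tail shows that the upper density of bad primes among the primes is below $1$, and the pigeonhole step then finishes the argument. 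Everything else --- the dictionary between primitive roots, power residues and splitting of primes in Kummer extensions, and the sieve bookkeeping --- is routine, if delicate.
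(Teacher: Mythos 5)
First, a point of context: the paper does not prove this statement at all. Lemma \ref{GK_c_non} is quoted verbatim as Corollary~2 of Heath-Brown's 1986 paper and is used as a black box; there is no internal argument to compare yours against. So what you have written is a from-scratch sketch of a deep analytic-number-theory theorem, and it has to be judged on its own.

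Your reduction is correct (the statement is exactly: among any three distinct primes $q_1,q_2,q_3$, at least one is a primitive root modulo infinitely many primes), and your dictionary between failure of primitivity, $\ell$-th power residues, and splitting in the Kummer fields $\mathbb{Q}(\zeta_\ell, q^{1/\ell})$ is the standard and correct starting point. But the core of the argument is missing, and the shape you propose is not the one that works. Two concrete problems. (i) Your plan splits into ``small $\ell \le z$ handled by Chebotarev'' and ``a tail $\ell > z$ handled by Brun--Titchmarsh, a large sieve over Kummer extensions, and a Bombieri--Vinogradov average.'' Unconditionally, no such averaging argument is known to control the tail for the full set of primes $p$: this is precisely the obstruction that forces Hooley's theorem to assume GRH. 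The device that actually works (Gupta--Ram Murty, refined by Heath-Brown) is different in kind: one uses a \emph{lower-bound sieve} to produce $\gg x/\log^2 x$ primes $p\le x$ for which $(p-1)/2$ has at most two prime factors, each exceeding a fixed power of $p$; for such structured $p$ there simply are no medium-sized $\ell$ dividing $p-1$, the huge $\ell$ are excluded by the elementary count of primes $p$ modulo which a fixed $q$ has small order (such $p$ divide $q^n-1$ for small $n$, so there are few of them), and $\ell=2$ is handled by a quadratic-reciprocity/multiplicative-independence argument among $q_1,q_2,q_3$ and their products. Your sketch treats the small and large ranges as separable contributions to a union bound over all primes $p$, which loses exactly where the problem is hard. (ii) You attribute the reduction from Gupta--Ram Murty's many primes down to three to ``inclusion--exclusion''; in fact the improvement comes from Heath-Brown's sharpening of the sieve (a Chen-type theorem for $p-1$) together with the residue-symbol bookkeeping, not from a combinatorial rearrangement of the union bound. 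As written, the proposal identifies where the difficulty sits but does not supply an argument that closes it, so it cannot be accepted as a proof; for the purposes of this paper the correct course is the one the authors take, namely to cite the result.
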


The result of Heath-Brown yields that there are infinitely many pairs of primes $(p,n)$, such that $p$ is a primitive root modulo $n$. It allows us to construct infinitely many examples of nonsplit graphs ${\cal S}_{\rm c}(L)$ for linear groups $L$.

\begin{proposition}\label{nonspl-Sc}
Let $L=A_{n-1}(p)={\rm PSL}_n(p)$, where $p$ and $n$ are primes, $n>13$ and $p$ is a primitive root modulo $n$. Then the graph ${\cal S}_{\rm c}(L)$ is nonsplit.
\end{proposition}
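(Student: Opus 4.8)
The plan is to exhibit an induced $2K_2$ in $\mathcal{S}_{\rm c}(L)$, so that nonsplitness follows from Proposition~\ref{forbidden}. Since $p$ is a primitive root modulo $n$ we have $p\not\equiv 1\pmod n$, hence $(n,p-1)=1$, $L={\rm PSL}_n(p)={\rm SL}_n(p)\leqslant{\rm GL}_n(p)$, and $e(n,p)=n-1$, i.e.\ $n\in R_{n-1}(p)$. Using $n>13$ (and Bertrand's postulate) choose a prime $b$ with $n/2<b<n-1$; then $b\nmid n(n-1)$. Then pick a small integer $a>1$ with $a+b<n$, $a\nmid n-1$, and with no prime of $R_a(p)$ dividing $n(n-1)(n+1)$ --- possible because the ``forbidden'' values of $a$ (the proper divisors of $n-1$, and the at most $O(\log n)$ numbers $e(\ell,p)$ for primes $\ell\mid n(n-1)(n+1)$) are $o(n)$ in number, while the range $(1,n/2)$ is not. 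All of $R_a(p)$, $R_b(p)$, $R_{n-1}(p)$, $R_n(p)$ are nonempty by Theorem~\ref{zsig}, and by the description of $\mathcal{S}(L)$ in \cite{13AmKaz} they determine pairwise distinct vertices of $\mathcal{S}_{\rm c}(L)$ (distinctness is easy; e.g.\ $R_{n-a}$ is adjacent to $R_a$ but, as $a+b<n$, not to $R_b$). I claim that $r_a$, $r_b$, $n$, $r_n(p)$ represent an induced $2K_2$.

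\emph{The two edges.} The normalizer $N$ in $L$ of a maximal torus of Coxeter type is metacyclic --- an extension of a cyclic group of order $(p^n-1)/(p-1)$ by $C_n$ --- hence solvable, with $n\mid|N|$ and $R_n(p)\subseteq\pi(N)$; moreover $1+p+\cdots+p^{n-1}\equiv 1\pmod n$ because $p$ is a primitive root mod $n$, so $n\nmid(p^n-1)/(p-1)$ and $\pi(N)=R_n(p)\cup\{n\}$. As $n\in R_{n-1}(p)$, this yields $r\approx n$ for every $r\in R_n(p)$, i.e.\ the edge $R_{n-1}\sim R_n$. For the other edge, since $a+b<n$ and $(n,p-1)=1$, the arithmetic adjacency criterion for linear groups (\cite{vasile-v,11VasVd.t}, cf.\ Lemma~\ref{vasi-algebra}) gives $r_a\sim r_b$ already in ${\rm GK}(L)\subseteq\mathcal{S}(L)$, i.e.\ the edge $R_a\sim R_b$.

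\emph{The four non-edges.} Two are immediate from Lemma~\ref{AK31}: passing to a Hall $p'$-subgroup, a solvable subgroup of $L$ of order divisible by $r_br_{n-1}$ would force $b\mid n-1$, and one of order divisible by $r_br_n$ would force $b\mid n$, both contradicting the choice of $b$; hence $R_b\not\sim R_{n-1}$ and $R_b\not\sim R_n$. For the other two it suffices, by \cite{13AmKaz}, to prove $r_a\not\approx t$ for $t\in R_n(p)$ and for $t=n$. If $t\in R_n(p)$ then $t$ acts irreducibly on $\mathbb{F}_p^n$, so a solvable subgroup $H\leqslant{\rm GL}_n(p)$ of order divisible by $r_at$ is irreducible; since $n$ is prime and $t\geqslant n+1$, $H$ is not imprimitive, hence primitive; and a primitive solvable subgroup of ${\rm GL}_n(p)$ with $p$ a primitive root mod $n$ has cyclic Fitting subgroup (the extraspecial alternative being excluded, as the smallest faithful $\mathbb{F}_p$-representation of an extraspecial $n$-group has degree $n\cdot e(n,p)=n(n-1)>n$), so $H\leqslant\Gamma{\rm L}_1(p^n)$ and $|H|\mid(p^n-1)n$, which $r_a$ does not divide --- a contradiction. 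If $t=n$, then by Lemma~\ref{tor} (applicable by the choice of $a$) a Hall $\{r_a,n\}$-subgroup $H_0$ of such an $H$ is reducible; as $H_0$ contains an element of order $n=r_{n-1}(p)$ requiring an $(n-1)$-dimensional irreducible constituent, $H_0$ stabilizes a hyperplane (or a line --- same conclusion), and the kernel of its action there is a $p$-group, so we obtain an irreducible solvable $\bar H_0\leqslant{\rm GL}_{n-1}(p)$ of order divisible by $r_an$ containing an element of order $n$ acting irreducibly. Running the same scheme in degree $n-1$: $\bar H_0$ is primitive (again $n>n-1$), the Fitting subgroup $F(\bar H_0)$ has no extraspecial $n$-part (since $n\nmid n-1$), so its normal $n$-subgroups lie inside the cyclic group $Z(F(\bar H_0))$; combining this with Lemma~\ref{AK26} and the restriction on the primes dividing $|{\rm Out}(F(\bar H_0))|$ that come from its symplectic-type factors (cf.\ \cite{supr}) forces $r_a\nmid|\bar H_0|$, a contradiction.

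\emph{Summary and the main obstacle.} Thus $r_a,r_b,n,r_n(p)$ induce $2K_2$ in $\mathcal{S}_{\rm c}(L)$, and $\mathcal{S}_{\rm c}(L)$ is not split. The two edges and the two $R_b$-non-edges are routine. The real difficulty is the last point, $r_a\not\approx n$: after reducing (via Lemma~\ref{tor}) to an irreducible solvable subgroup of ${\rm GL}_{n-1}(p)$ carrying an irreducible element of order $n$, one has to analyze primitive solvable linear groups of the composite degree $n-1$ over a prime field, and rule out --- using $(n,p-1)=1$, $n>13$, and the choice of $a$ --- the configurations whose symplectic-type Fitting quotient has outer automorphism group divisible by both $n$ and $r_a$; this is where the reliance on \cite{supr} and \cite{13AmKaz} is concentrated.
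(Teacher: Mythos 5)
Your strategy coincides with the paper's: both proofs exhibit an induced $2K_2$ in ${\cal S}_{\rm c}(L)$ on the classes of four primes $r_a$, $r_b$, $n=r_{n-1}(p)$, $r_n(p)$; both get the edge $n\approx r_n$ from the solvable normalizer of a Coxeter torus, kill the two non-edges at $r_b$ with Lemma~\ref{AK31}, handle $r_a\not\approx r_n$ by confining any solvable overgroup of an $r_n$-element to the normalizer of a Singer torus, and attack $r_a\not\approx n$ through Lemma~\ref{tor}. (The paper takes $b=\frac{n+3}{2}$ and $a=\frac{n-5}{2}$ explicitly instead of your Bertrand prime plus counting argument; that difference is cosmetic, though your existence claim for $a$ is asymptotic and should be checked near $n=17$.) The first genuine gap is the step you yourself flag: $r_a\not\approx n$. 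After Lemma~\ref{tor} forces reducibility and you pass to an irreducible solvable $\bar H_0\leqslant{\rm GL}_{n-1}(p)$ of order divisible by $r_an$, you launch into a classification of primitive solvable subgroups of the composite degree $n-1$ via symplectic-type Fitting subgroups, and the decisive assertion (``forces $r_a\nmid|\bar H_0|$'') is never proved --- and cannot be waved away, since for $n-1$ a power of $2$ there genuinely are primitive solvable subgroups of ${\rm GL}_{n-1}(p)$ with extraspecial Fitting subgroup admitting an element of order $n$ on top. The paper closes this step much more cheaply: in ${\rm GL}_{n-1}(p)$ the prime $n$ is itself a primitive prime divisor of $p^{n-1}-1$ with $n-1>\frac{n-1}{2}$, so the very Sylow-normalizer/Singer-torus argument you used for $r_a\not\approx r_n$ applies one dimension down and yields $|\bar H_0|$ dividing $(p^{n-1}-1)(n-1)$, which is coprime to $r_a$ by the choice of $a$; no primitivity analysis is needed.

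The second gap is the distinctness of the four vertices of ${\cal S}_{\rm c}(L)$, which you dispose of in a parenthesis. Two adjacent primes can perfectly well have equal closed neighborhoods and hence collapse to a single vertex of the compact form, so one must separate $\tilde r_a$ from $\tilde r_b$, and $\tilde n$ from $\tilde r_n$, by vertices of $\mathcal{S}(L)$ (not merely of ${\rm GK}(L)$) adjacent to one but not the other. Your separator $r_{n-a}$ for the first pair works only after upgrading ${\rm GK}$-nonadjacency of $r_{n-a}$ and $r_b$ to $\mathcal{S}$-nonadjacency (one more application of Lemma~\ref{AK31}, using $r_b\geqslant 2b+1>n>n-a$); the pair $\{\tilde n,\tilde r_n\}$ is not separated at all in your write-up. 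The paper separates it by the characteristic: $p\approx n$ because a parabolic contains the solvable group $p^{n-1}{:}C_{p^{n-1}-1}$ of order divisible by $pn$, while $p\not\approx r_n$ because any solvable overgroup of an $r_n$-element has order dividing $n(p^n-1)$; and it separates its two ``small'' classes by the auxiliary vertex $r_{(n-1)/2}$. These repairs are routine, but they are part of the proof rather than afterthoughts.
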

\begin{proof} We will show that ${\cal S}_{\rm c}(L)$ contains a subgraph isomorphic to $2K_2$.
Recall that ${\cal S}_{\rm c}(L)$ is defined as a quotient graph $\mathcal{S}(L)/_\equiv$ w.r.t. the equivalence relation ``$\equiv$'', where two vertices $r$ and $s$ in $\mathcal{S}(L)$ are equivalent if the corresponding balls  $r^\bot$ and $s^\bot$ of radius 1 coincide. For a vertex $r$ of the graph $\mathcal{S}(L)$, denote by $\tilde{r}$ the corresponding equivalence class, i.e., $\tilde{r}$ is a vertex in ${\cal S}_{\rm c}(L)$.
For brevity, set $r_i=r_i(p)$. % and $R_i=R_i(p)$.
Recall that we write $r\sim s$ if $r$ and $s$ are adjacent in ${\rm GK}(L)$, and $r\approx s$ if $r$ and $s$ are adjacent in $\mathcal{S}(L)$.

The group $L$ contains a solvable subgroup whose order is divisible by $n\cdot r_n$. We have that $p$ is a primitive root modulo $n$, and hence $n=r_{n-1}$, so $r_n\approx r_{n-1}$. On the other hand, $p\approx r_{n-1}$ and $p\not\approx r_n$, so $r_{n-1}\not\equiv r_n$.
%Thus, $(\tilde{r}_n,\tilde{r}_{n-1})$ is an edge in ${\cal S}_{\rm c}(L)$.

Now put $k=\frac{n-5}{2}$ and $m=\frac{n+3}{2}$. Note that $n>13$ is a prime, and $r_i>i$ for every integer $i>2$.
%proof
The prime $n$ is not divisible by $k$, $m$, $r_k$ or $r_m$, since $k<n-1$ and $m<n-1$, and $n$ is a primitive prime divisor of $p^{n-1}-1$.
Since $n>13$, we have that
$$
\frac{n-1}{3}<k<\frac{n-1}{2}<m<n-1,
$$
and so $n-1$ is not a multiple of $k$ or $m$. Moreover, $n-1$ is not divisible by $r_k$ and $r_m$. Indeed, assume that $r_m$ divides $n-1$. Since $r_m>m=\frac{n+3}{2}$, the only possibility for $r_m$ is $n-1$. However, $n$ is a prime, so $n-1$ is not. Assume that $r_k$ divides $n-1$. We have that $r_k\geq\frac{n-3}{2}$. If $r_k=\frac{n-3}{2}$, then $\frac{n-1}{3}<r_k<\frac{n-1}{2}$. If $r_k=\frac{n-1}{2}$, then $r_k$ divides $p^{\frac{n-3}{2}}-1$ by Fermat's little theorem. On the other hand, $r_k$ is a primitive prime divisor of $p^{\frac{n-5}{2}}-1$, but $\frac{n-5}{2}$ does not divide $\frac{n-3}{2}$. Finally, $n-1$ is not a prime, so $r_k\neq n-1$.
Therefore, each of the numbers $k$, $m$, $r_k$, $r_m$ does not divide $n$ and $n-1$.

Since $k+m<n$, Lemma~\ref{vasi-algebra}(iii) implies that $r_k\sim r_m$, and so $r_k\approx r_m$.
We claim that $r_k\not\equiv r_m$. Indeed, consider $r_l$, where $l=\frac{n-1}{2}$. Then $r_l\sim r_k$ and $r_l\not\sim r_m$. Assume that $r_l\approx r_m$. Then $L$ has a solvable subgroup $H$, whose order is divisible by $r_l r_m$. Now Lemma \ref{AK31} implies that $r_l$ divides $m$.
But $m=\frac{n+3}{2}$, and $r_l>l=\frac{n-1}{2}$, so the only possibility is that $r_l=\frac{n+3}{2}$. Then $\frac{n+3}{2}$ is a prime and divides $p^{\frac{n+1}{2}}-1$ by Fermat's little theorem, while $\frac{n+3}{2}$ is a primitive prime divisor of $p^{\frac{n-1}{2}}-1$, which is impossible because $\frac{n-1}{2}$ does not divide $\frac{n+3}{2}$.
Thus $r_l\approx r_k$ and $r_l\not\approx r_m$, and hence $r_k\not\equiv r_m$.

It follows from Lemma \ref{AK31} that $L$ does not contain solvable subgroups, whose orders are divisible by $r_m r_n$ or $r_m r_{n-1}$. Indeed, if such a subgroup exists, then $r_m$ divides either $n$ or $n-1$, which is impossible. Therefore, $r_m\not\approx r_n$ and $r_m\not\approx r_{n-1}$.

It remains to show that $r_k\not\approx r_n$ and $r_k\not\approx r_{n-1}$.
If $H$ is a solvable subgroup of $L$ whose order is divisible by $r_n$, then $H$ normalizes its Sylow $r_n$-subgroup by \cite[Lemma 2.5]{13AmKaz}. Therefore, $H$ is contained in the normalizer of a maximal torus of order $\frac{p^n-1}{(p-1)(n,p-1)}$ in $L$, and so the order of $H$ divides $n(p^n-1)$.
Since $r_k$ divides neither $p^n-1$, nor $n$, it cannot divide the order of $H$.
Thus $r_k\not\approx r_n$.
Now assume that $H$ is a solvable subgroup of $L$ whose order is divisible by $r_k r_{n-1}$. Lemma~\ref{tor} implies that $H$ is reducible.
Replacing $n$ by $n-1$, we again conclude that $r_k$ must divide $p^{n-1}-1$ or $n-1$, which is a contradiction. Therefore, $r_k\not\approx r_{n-1}$.

Thus, we have proved that the graph ${\cal S}_{\rm c}(L)$ contains the edges $(\tilde{r}_k,\tilde{r}_m)$ and $(\tilde{r}_{n-1},\tilde{r}_n)$, and the vertices $\tilde{r}_k$, $\tilde{r}_m$, $\tilde{r}_{n-1}$ and $\tilde{r}_n$ induce the subgraph isomorphic to $2K_2$ in ${\cal S}_{\rm c}(L)$.
\end{proof}

\begin{center}
 {\sc Acknowledgments }
\end{center}
The part of this work was done while the third author had a visiting position at the
Department of Mathematical Sciences, Kent State
University, USA. He would like to thank the hospitality of the Department of Mathematical Sciences of KSU.

The forth author was supported by the program of fundamental scientific researches of the SB RAS No. I.1.1., project No. 0314-2016-0001.

\noindent {\sc Mark L. Lewis}\\[0.2cm]
{\sc Department of Mathematical Sciences, Kent State
University,}\\ {\sc  Kent, Ohio $44242$, United States of
America}\\[0.1cm]
{\em E-mail address}: {\tt  lewis@math.kent.edu}\\[0.3cm]
 {\sc J. Mirzajani and A. R. Moghaddamfar}\\[0.2cm]
{\sc Faculty of Mathematics, K. N. Toosi
University of Technology,
 P. O. Box $16315$--$1618$, Tehran, Iran,}\\[0.1cm]
{\em E-mail addresses}:  {\tt  jmirzajani@mail.kntu.ac.ir}, \ \  {\tt moghadam@kntu.ac.ir}\\[0.3cm]
{\sc  A. V. Vasil'ev}\\[0.2cm]
{\sc Sobolev Institute of Mathematics, Novosibirsk, Russia},\\[0.1cm]
{\em E-mail address:} {\tt vasand@math.nsc.ru}\\[0.3cm]
{\sc M. A. Zvezdina}\\[0.2cm]
{\sc Novosibirsk State University, Novosibirsk, Russia},\\[0.1cm]
{\em E-mail address:} {\tt maria.a.zvezdina@gmail.com}\\[0.3cm]

\end{document}